\def\RSthmtxt{theorem~}\newref{thm}{name = \RSthmtxt}}
\def\RSlemtxt{lemma~}\newref{lem}{name = \RSlemtxt}}
\numberwithin{equation}{section}
\numberwithin{figure}{section}
\theoremstyle{plain}
\newtheorem{thm}{\protect\theoremname}[section]
  \theoremstyle{remark}
  \newtheorem{rem}[thm]{\protect\remarkname}
  \theoremstyle{plain}
  \newtheorem{cor}[thm]{\protect\corollaryname}
  \theoremstyle{definition}
  \newtheorem{defn}[thm]{\protect\definitionname}
  \theoremstyle{plain}
  \newtheorem{prop}[thm]{\protect\propositionname}
  \theoremstyle{remark}
  \newtheorem{claim}[thm]{\protect\claimname}
  \theoremstyle{plain}
  \newtheorem{lem}[thm]{\protect\lemmaname}
  \theoremstyle{plain}
  \newtheorem{fact}[thm]{\protect\factname}
  \theoremstyle{plain}
  \newtheorem{conjecture}[thm]{\protect\conjecturename}
\date{}
\tikzset{cross/.style={cross out, draw=black, minimum size=2*(#1-\pgflinewidth), inner sep=0pt, outer sep=0pt}, cross/.default={1pt}}
  \providecommand{\claimname}{Claim}
  \providecommand{\conjecturename}{Conjecture}
  \providecommand{\corollaryname}{Corollary}
  \providecommand{\definitionname}{Definition}
  \providecommand{\factname}{Fact}
  \providecommand{\lemmaname}{Lemma}
  \providecommand{\propositionname}{Proposition}
  \providecommand{\remarkname}{Remark}
\providecommand{\theoremname}{Theorem}
\begin{document}
\global\long\def\One{\mathds{1}}

\global\long\def\Laplacian{\Delta}

\global\long\def\grad{\nabla}

\global\long\def\norm#1{\left\Vert #1\right\Vert }

\global\long\def\zz{\mathbb{Z}}

\global\long\def\rr{\mathbb{R}}

\global\long\def\nn{\mathbb{N}}

\global\long\def\pp{\mathbb{P}}

\global\long\def\ee{\mathbb{E}}

\global\long\def\floor#1{\left\lfloor #1\right\rfloor }

\title{Kinetically Constrained Models with Random Constraints}

\author{Assaf Shapira}
\begin{abstract}
We study two kinetically constrained models in a quenched random environment.
The first model is a mixed threshold Fredrickson-Andersen model on
$\zz^{2}$, where the update threshold is either $1$ or $2$. The
second is a mixture of the Fredrickson-Andersen $1$-spin facilitated
constraint and the North-East constraint in $\zz^{2}$. We compare
three time scales related to these models \textendash{} the bootstrap
percolation time for emptying the origin, the relaxation time of the
kinetically constrained model, and the time for emptying the origin
of the kinetically constrained model \textendash{} and understand
the effect of the random environment on each of them.
\end{abstract}

\maketitle

\section{Introduction}

Kinetically constrained models (KCMs) are a family of interacting
particle systems introduced by physicists in order to study glassy
and granular materials \cite{garrahansollichtoninelli2011kcm,Ritort}.
These are reversible Markov processes on the state space $\left\{ 0,1\right\} ^{V}$,
where $V$ is the set of vertices of some graph. The equilibrium measure
of these processes is a product measure of i.i.d. Bernoulli random
variables, and their nontrivial behavior is due to kinetic constraints
\textendash{} the state of each site is resampled at rate $1$, but
only when a certain local constraint is satisfied. This condition
expresses the fact that sites are blocked when there are not enough
empty sites in their vicinity. One example of such a constraint is
that of the Fredrickson-Andersen $j$-spin facilitated model on $\zz^{d}$,
in which an update is only possible if at least $j$ nearest neighbors
are empty \cite{FA1984kcm}. We will refer to this constraint as the
FA$j$f constraint. Another example is the North-East constraint:
the underlying graph is $\zz^{2}$, and an update is possible only
if both the site above and the site to the right are empty \cite{Lalley2006northeast}.
These constraints result in the lengthening of the time scales describing
the dynamics as the density of empty sites $q$ tends to $0$. This
happens since sites belonging to large occupied regions could only
be changed when empty sites penetrate from the outside. The main difficulty
in the analysis of KCMs is that they are not attractive, which prevents
us from using tools such as monotone coupling and censoring often
used in the study of Glauber dynamics. For this reason spectral analysis
and inequalities related to the spectral gap are essential for the
study of time scales in these models. See \cite{martinelli2018universality2d}
for more details.

A closely related family of models are the bootstrap percolation models,
which are, unlike KCMs, monotone deterministic processes in discrete
time. The state space of the bootstrap percolation is the same as
that of the KCM, and they share the same family of constraints; but
in the bootstrap percolation occupied sites become empty (deterministically)
whenever the constraint is satisfied, and empty sites can never be
filled. The initial conditions of the bootstrap percolation are random
i.i.d Bernoulli random variables with parameter $1-q$, i.e., they
are chosen according to the equilibrium measure of the KCM. In this
paper we will refer to the bootstrap percolation that corresponds
to certain constraint by their KCM name, so, for example, the $j$-neighbor
bootstrap percolation will be referred to as the bootstrap percolation
with the FA$j$f constraint.

In the examples given previously of the FA$j$f model and the North-East
model the constraints are translation invariant. Universality results
for general homogeneous models have been studied recently for the
bootstrap percolation in a series of works that provide a good understanding
of their behavior \cite{balisterbollobas2016subcriticalbp,bollobas2016universality2d,bollobassmithuzzell2015universalityzd,hartarsky2018mathcal}.
Inspired by the tools developed for the bootstrap percolation, universality
results on the KCMs could also be obtained for systems with general
translation invariant constraints \cite{martinellitonitelli2016towardsuniversality,martinelli2018universality2d}.
Another type of models vastly studied for the bootstrap percolation
are models in a random environment, such as the bootstrap percolation
on the polluted lattice \cite{gravner1997polluted,gravner2017polluted},
Galton-Watson trees \cite{bollobas2014bootstrapongwcriticality},
random regular graphs \cite{balogh2007randomregular,janson2009percolationexplosion}
and the Erd\H{o}s-R\'enyi graph \cite{janson2012Gnp}. KCMs in random
environments have also been studied in the physics literature, see
\cite{schulz1994modifiedfa,willart1999dynlengthscale}.

In this paper we will consider two models on the two dimensional lattice
with random constraints. We will focus on the divergence of time scales
when the equilibrium density of empty sites $q$ is small.

The time scale that is commonly considered in KCMs is the relaxation
time, i.e., the inverse of the spectral gap. This time scale determines
the slowest possible relaxation of correlation between observables,
but for homogeneous system it often coincides with typical time scales
of the system (see, e.g., \cite{mareche2018duarte}). However, when
the system is not homogeneous it will in general not describe actual
time scales that we observe. We will see in this paper that very unlikely
configurations of the disorder that appear far away determine the
relaxation time, even though the observed local behavior is not likely
to be effected by these remote regions.

Another time scale that is natural to look at is the first time at
which the origin (or any arbitrary vertex) is empty. In the bootstrap
percolation literature, this is indeed the time most commonly studied.
This time could be observed physically, and we will see that it is
not significantly effected by the ``bad'' regions far away from
the origin.

In this paper we compare the three time scales \textendash{} the time
it takes for the origin to be emptied with the bootstrap percolation,
the relaxation time for the KCM, and the first time the origin is
empty in the KCM. We will first analyze these time scales in a mixed
threshold FA model on the two dimensional lattice. The second model
we consider is a mixed KCM, in which vertices have either the FA1f
constraint, or the North-East constraint. This will be an example
of a model in which the relaxation time is infinite, but the time
at which the origin becomes empty is almost surely finite.

\section{Mixed threshold Fredrickson-Andersen model}

\subsection{Model and notation}

In this section we will treat two models \textendash{} the mixed threshold
bootstrap percolation on $\zz^{2}$ and the mixed threshold Fredrickson-Andersen
model on $\zz^{2}$. Both models will live on the same random environment,
that will determine the threshold at each vertex of $\zz^{2}$. It
will be denoted $\omega$, and the threshold at a vertex $x$ will
be $\omega_{x}\in\left\{ 1,2\right\} $. This environment is chosen
according to a measure $\nu$, which depends on a parameter $\pi\in\left(0,1\right)$.
$\nu$ will be a product measure \textendash{} for each vertex $x\in\zz^{2}$
$\omega_{x}$ equals $1$ with probability $\pi$ and $2$ with probability
$1-\pi$, independently from the other vertices. Sites with threshold
$1$ will be called \textit{easy}, and sites with threshold $2$ \textit{difficult}.

Both the bootstrap percolation and the FA dynamics are defined on
the state space $\Omega=\left\{ 0,1\right\} ^{\zz^{2}}$. For a configuration
$\eta\in\Omega$, we say that a site $x$ is \textit{empty} if $\eta_{x}=0$
and \textit{occupied} if $\eta_{x}=1$. For $\eta\in\Omega$ and $x\in\zz^{2}$
define the constraint
\begin{align}
c_{x}\left(\eta\right) & =\begin{cases}
1 & \sum_{y\sim x}\left(1-\eta_{y}\right)\ge\omega_{x}\\
0 & \text{otherwise}
\end{cases}.\label{eq:faconstraint}
\end{align}

We can now define the bootstrap percolation with these constraints
\textendash{} it is a deterministic dynamics in discrete time, where
at each time step $t$ empty vertices stay empty, and an occupied
vertex $x$ becomes empty if the constraint is satisfied, namely $c_{x}\left(\eta\left(t-1\right)\right)=1$.
The initial conditions for the bootstrap percolation are random, depending
on a parameter $q\in\left(0,1\right)$. They are chosen according
to the measure $\mu$, defined as a product of independent Bernoulli
measures:
\begin{align*}
\mu & =\bigotimes_{x\in\zz^{2}}\mu_{x},\\
\mu_{x} & \sim\text{Ber}\left(1-q\right).
\end{align*}

The Fredrickson-Andersen model is a continuous time Markov process
on $\Omega$. It is reversible with respect to the equilibrium measure
$\mu$ defined above, and its generator $\mathcal{L}$ is defined
by
\begin{equation}
\mathcal{L}f=\sum_{x}c_{x}\left(\mu_{x}f-f\right)\label{eq:generatorofkcm}
\end{equation}
for any local function $f$. We will denote by $\mathcal{D}$ the
Dirichlet form associated with $\mathcal{L}$. Probabilities and expected
values with respect to this process starting at $\eta$ will be denoted
by $\pp_{\eta}$ and $\ee_{\eta}$. When starting from equilibrium
we will use $\pp_{\mu}$ and $\ee_{\mu}$.

Finally, for any event $A\subseteq\Omega$, we define the hitting
time
\[
\tau_{A}=\inf\left\{ t\,:\,\eta(t)\in A\right\} .
\]
The hitting time is defined for both the KCM and the bootstrap percolation.
For the time it takes to empty the origin we will use the notation
\[
\tau_{0}=\tau_{\left\{ \eta_{0}=0\right\} }.
\]

\subsection{Results}

The first result concerns the bootstrap percolation. It will say that
for small values of $q$, $\tau_{0}$ scales as $\frac{1}{\sqrt{q}}$.
To avoid confusion we stress that $\mu$ and $\pp_{\mu}$ depend on
$q$, even though this dependence is not expressed explicitly in the
notation.
\begin{thm}
Consider the bootstrap percolation with the mixed FA constraint. Then
$\nu$-almost surely
\begin{align}
\lim_{q\rightarrow0}\,\mu\left[\tau_{0}\ge\frac{a}{\sqrt{q}}\right]\xrightarrow{a\rightarrow\infty}0,\label{eq:fa12d2bpupperbound}\\
\lim_{q\rightarrow0}\,\mu\left[\tau_{0}\le\frac{a}{\sqrt{q}}\right]\xrightarrow{a\rightarrow0}0.\label{eq:fa12d2bplowerbound}
\end{align}
\end{thm}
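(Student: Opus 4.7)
\textbf{Lower bound} (equation \eqref{fa12d2bplowerbound})\textbf{.} The lower bound is the universal easy direction and does not use any property of the environment. The bootstrap percolation has unit propagation speed: by induction on $t$, any site that becomes empty at time $t$ lies at graph distance at most $t$ from the initial empty set $E_0 = \{x : \eta_x = 0\}$, since a site can only become empty when at least one neighbor is already empty (regardless of threshold). Hence $\tau_0 \ge d(0, E_0)$, and a union bound yields
\[
\mu\!\left[\tau_0 \le \frac{a}{\sqrt{q}}\right] \le \left|B\!\left(0, a/\sqrt{q}\right)\right| \cdot q \le C a^{2},
\]
uniformly in $q$ and $\omega$, which tends to $0$ as $a \to 0$.

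\textbf{Upper bound} (equation \eqref{fa12d2bpupperbound})\textbf{.} For the nontrivial direction I would use a block renormalization. Fix a large scale $B = B(\pi)$ and partition $\zz^{2}$ into blocks of side $B$. Declare a block to be $\omega$-good if its internal structure transmits emptiness rapidly: concretely, if whenever any single site inside it is made empty, the bootstrap percolation empties the entire block within $CB$ steps and reaches every side of the block. The first key step is to show that for every $\pi > 0$, the probability $\nu[\text{block is $\omega$-good}]$ can be made arbitrarily close to $1$ by choosing $B$ large. Heuristically, a typical $B \times B$ block contains $\sim \pi B^{2}$ easy sites, and one constructs explicit skeletons---paths of easy sites, with difficult sites having enough easy neighbors to be emptied once flanked---that span the block in both directions; such skeletons exist with probability tending to $1$ by concentration arguments on the block. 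I expect this step, and in particular crafting a definition of $\omega$-good that is simultaneously strong enough to transmit emptiness between adjacent blocks and likely enough to hold for every $\pi > 0$, to be the main technical obstacle.

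Once this key lemma is in hand, standard supercritical site-percolation arguments on the renormalized lattice imply that $\nu$-a.s.\ good blocks percolate, the infinite good cluster has positive density, and the origin lies within $O(1)$ renormalized blocks of this cluster. Propagation through the good cluster then takes $O(B)$ bootstrap steps per block, so an initial empty site at graph distance $R$ from the origin within the good cluster yields $\tau_{0} = O(R)$. A first/second-moment estimate then shows that the nearest initial empty site in the infinite good cluster lies within distance $O(a/\sqrt{q})$ of the origin with $\mu$-probability tending to $1$ as $a \to \infty$: the good cluster contains $\Theta(a^{2}/q)$ sites in the ball of radius $a/\sqrt{q}$, each independently empty with probability $q$, so at least one is empty with probability $1 - \exp(-c a^{2}) \to 1$. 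Combining the propagation estimate with this hitting bound gives $\tau_{0} = O(a/\sqrt{q})$ with high $\mu$-probability, as required.
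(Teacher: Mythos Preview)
Your lower bound is essentially the paper's argument, and your overall upper-bound strategy (block renormalization, percolation of good blocks, finding a seed at distance $O(1/\sqrt{q})$) is also the paper's. The gap is in your definition of an $\omega$-good block. You require that emptying \emph{any single site} in the block empties the whole block; but a single empty site $x$ can only trigger a neighbor $y$ if $\omega_{y}=1$, so propagation from $x$ fails whenever all four neighbors of $x$ are difficult. The probability that a fixed site has four difficult neighbors is $(1-\pi)^{4}>0$, and the expected number of such sites in a $B\times B$ block is of order $B^{2}(1-\pi)^{4}\to\infty$. Hence the probability that a block is good in your sense tends to $0$, not $1$, as $B\to\infty$, and the ``key lemma'' you flag as the main obstacle is in fact false for this definition.

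The paper resolves this by decoupling the two roles you are asking a single notion to play. A block is \emph{good} if it has an easy site in every row and every column; this suffices for emptiness to propagate from a \emph{fully empty} neighboring block, and its probability is $\ge 1-2Le^{-\pi L}\to 1$, so good blocks percolate. Separately, a block is \emph{excellent} if a single empty site at its corner internally empties it; this has some fixed probability $p_{L}>0$ (not close to $1$), which is enough: among the $\Theta(c^{2}/q)$ good blocks within graph distance $c/\sqrt{q}$ in the infinite good cluster, a positive fraction are excellent by an LLN argument, and the probability that none of them has an empty corner is at most $(1-q)^{\theta' c^{2}/q}\to 0$ as $c\to\infty$. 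Once one excellent block is seeded, propagation through the good cluster and then through the finite component around the origin (your ``$O(1)$ renormalized blocks'') finishes in $O(c/\sqrt{q})$ steps. So your outline becomes a proof once you split ``good'' into a high-probability transmission property and a positive-probability seeding property.
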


For the KCM we have an exponential divergence of the relaxation time,
but a power law behavior of $\tau_{0}$.
\begin{thm}
\label{thm:scalingoftimeforfa12}Consider the KCM with the mixed FA
constraint.
\begin{enumerate}
\item There exist a constant $c>0$ (that does not depend on $\pi,q$) such
that $\nu$-almost surely the relaxation time of the dynamics is at
least $e^{\nicefrac{c}{q}}$.
\item $\nu$-almost surely there exist $\underline{\alpha}$ and $\overline{\alpha}$
(which may depend on $\omega$) such that
\begin{align}
\pp_{\mu}\left[\tau_{0}\ge q^{-\overline{\alpha}}\right] & \xrightarrow{q\rightarrow0}0,\label{eq:fa12d2kcmupperbound}\\
\pp_{\mu}\left[\tau_{0}\le q^{-\underline{\alpha}}\right] & \xrightarrow{q\rightarrow0}0.\label{eq:fa12d2kcmlowerbound}
\end{align}

Moreover, $\ee_{\mu}\left[\tau_{0}\right]\ge q^{-\underline{\alpha}}$
for $q$ small enough.

\end{enumerate}
\end{thm}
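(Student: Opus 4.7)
For item (1), I would use the Poincar\'e inequality $T_{\mathrm{rel}} \geq \mathrm{Var}_{\mu}(f)/\mathcal{D}(f)$ to transplant the known exponential lower bound on the relaxation time of homogeneous FA2f to the mixed model. The crucial observation is that if $f$ is supported on a box $B \subset \zz^{2}$ on which $\omega \equiv 2$, then for every $x \in B$ the mixed-FA constraint $c_{x}$ coincides with the FA2f constraint, so the mixed-model Dirichlet form of $f$ equals the FA2f Dirichlet form of $f$, while $\mathrm{Var}_{\mu}(f)$ is the same in both models because the equilibrium measure is the same. The known bound $T_{\mathrm{rel}}^{\mathrm{FA2f}} \geq e^{c/q}$ on $\zz^{2}$ is realized by a witness function supported on a box of side length polynomial in $1/q$; a standard Borel--Cantelli argument guarantees that $\nu$-a.s., for every $q$ small enough, $\zz^{2}$ contains such all-difficult boxes, and plugging the witness into the Poincar\'e inequality then gives $T_{\mathrm{rel}} \geq e^{c/q}$.

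For \eqref{fa12d2kcmupperbound}, I would work in a finite box $B_{R}$ around the origin of side length $R = q^{-\kappa}$ for a suitable $\kappa$. The main claim is that the mixed-FA KCM restricted to $B_{R}$ with all-occupied boundary has relaxation time at most $q^{-\alpha(\omega)}$ for some $\nu$-a.s.\ finite $\alpha(\omega)$. This can be established via a multiscale/block-dynamics argument in the spirit of the 2D KCM universality results: $\nu$-a.s., every all-difficult subbox of $B_{R}$ has diameter only $O(\log(1/q))$, so the only local obstructions inside $B_{R}$ are of a size compatible with the bootstrap construction of \eqref{fa12d2bpupperbound} emptying the origin in $O(1/\sqrt{q})$ steps. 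Combining this finite-volume relaxation estimate with the standard exponential tail bound for hitting times in reversible Markov processes yields $\pp_{\mu}[\tau_{0} \geq q^{-\overline{\alpha}}] \to 0$ with $\overline{\alpha}$ depending on $\omega$ through $\alpha(\omega)$.

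For \eqref{fa12d2kcmlowerbound}, I would compare with the bootstrap dynamics. If $\eta_{0}^{\mathrm{KCM}}(t) = 0$ at some $t$, then a causal chain of legal single-site KCM flips must connect an initially empty site to the origin, and its length is bounded below by $\tau_{0}^{\mathrm{bp}}$, which by \eqref{fa12d2bplowerbound} is $\gtrsim 1/\sqrt{q}$ with $\mu$-high probability. Each KCM flip along the chain requires a Poisson clock ring together with an independent Bernoulli($q$) mark equal to $0$, so the expected time to realize a chain of length $K$ is $\gtrsim K/q$; a standard concentration argument promotes this to the probabilistic bound $\pp_{\mu}[\tau_{0} \leq q^{-\underline{\alpha}}] \to 0$ for a suitable $\underline{\alpha} > 0$. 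The expectation bound follows immediately: for $q$ small enough $\pp_{\mu}[\tau_{0} \geq q^{-\underline{\alpha}}] \geq 1/2$, hence $\ee_{\mu}[\tau_{0}] \geq \tfrac{1}{2} q^{-\underline{\alpha}}$, and the factor $1/2$ is absorbed by a negligible decrease of $\underline{\alpha}$.

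The hard part will be the polynomial finite-volume relaxation time used in the upper bound: since item (1) shows that $T_{\mathrm{rel}}$ is infinite on $\zz^{2}$, the argument must carefully choose the scale $R$ so that $B_{R}$ is $\nu$-a.s.\ free of all-difficult subboxes large enough to host an FA2f-type bottleneck, yet is large enough to contain the initial vacancies used by the bootstrap construction to reach the origin. Quantifying this trade-off and establishing the corresponding block-dynamics estimate with $\alpha(\omega) < \infty$ $\nu$-a.s.\ is the technical core of the argument.
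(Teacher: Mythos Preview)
Your treatment of part (1) matches the paper's.

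For the upper bound \eqref{fa12d2kcmupperbound}, the paper does not bound a finite-volume relaxation time. Instead it fixes $L=L(\omega)$ so that the box containing the origin lies in the infinite cluster of ``good'' $L$-boxes (each row and column contains an easy site), picks a self-avoiding path of good boxes from the origin, and for every $\eta$ outside a rare bad event $B$ constructs an explicit sequence of legal flips that slides an empty column along this path down to the origin. The key feature is that each intermediate configuration differs from $\eta$ in at most $3L$ sites, so the energy cost is $O(L)$, \emph{independent of $q$}. Plugging this canonical path into the identity $\mu(\tau_A)=\mathcal{D}\tau_A$ with $A=B\cup\{\eta_0=0\}$ gives $\mu(\tau_A)\le C_L\,q^{-5L-2}$, and a separate estimate shows $\tau_B$ is exponentially large so that hitting $A$ really means hitting $\{\eta_0=0\}$. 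Your approach via a restricted-dynamics relaxation time faces an obstacle you do not address: with all-occupied boundary the restricted dynamics on $B_R$ is not irreducible (the fully occupied configuration is absorbing), so its spectral gap as stated is zero. You would have to work instead with $\overline{\tau}_A$ of \defref{taubar} and \claimref{gapofrestricteddynamics}, and at that point the core input is still a canonical-path estimate of the type the paper carries out explicitly.

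For the lower bound \eqref{fa12d2kcmlowerbound}, your causal-chain argument has a gap: the claim that a chain of $K$ successive $1\to 0$ flips requires time $\gtrsim K/q$ is not correct in two dimensions, because the Poisson clocks at the distinct sites of the chain are independent and can ring in rapid succession. The theorem itself is rescued by the trivial observation (which the paper also makes) that the origin flips $1\to 0$ at rate at most $q$, so $\pp_\mu[\tau_0\le t]\le qt$ and any $\underline{\alpha}<1$ works. To obtain an $\omega$-dependent $\underline{\alpha}$ (needed for \remref{exponentsaretruelyrandom}) the paper uses an energy-barrier argument instead: if $[-L,L]^2$ contains only difficult sites, then for the span of $[-L,L]^2$ to first include the origin one must see at least $L/2$ simultaneous vacancies in that box, an event of $\mu$-probability $O(q^{L/2})$. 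Feeding the indicator of ``origin not in the span of $[-L,L]^2$'' into the variational principle of \propref{variationalprincipleforhittingtime} then yields $\ee_\mu[\tau_0]\ge c_L\,q^{-L/2-1}$.
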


\begin{rem}
\label{rem:exponentsaretruelyrandom}We will see that the two exponents
$\underline{\alpha}$ and $\overline{\alpha}$ cannot be deterministic
\textendash{} there is $\alpha_{0}\in\rr$ such that $\nu\left(\overline{\alpha}<\alpha_{0}\right)>0$
but $\nu\left(\underline{\alpha}<\alpha_{0}\right)<1$.
\end{rem}

\begin{rem}
In these two theorems we see that while $\tau_{0}$ for the bootstrap
percolation behaves like $q^{-\nicefrac{1}{2}}$, its scaling for
the FA is random. In the proof we will see in details the reason for
this difference, but we could already try to describe it heuristically.
The bootstrap percolation is dominated by the sites far away from
the origin, and once these sites are emptied the origin will be emptied
as well. The influence of the environment far away becomes deterministic
by a law of large numbers, so we do not see the randomness of $\omega$
in the exponent. To the contrary, in the FA dynamics even when sites
far away are empty, one must empty many sites in a close neighborhood
of the origin simultaneously before the origin could be emptied. Therefore,
in order to empty the origin we must overcome a large energy barrier,
which makes $\tau_{0}$ bigger. This effect depends on the structure
close to the origin, so it feels the randomness of the environment.
\end{rem}

For simplicity, we have chosen to focus on the two dimensional case.
However, a more general result can also be obtained. In the next two
theorems we will consider the bootstrap percolation and KCM on $\zz^{d}$.
The thresholds $\left\{ \omega_{x}\right\} _{i\in\zz^{2}}$ are i.i.d.,
according to a law that we denote by $\nu$. We will also assume that
the probability that the threshold is $1$ is nonzero, and that the
probability that the threshold is more than $d$ is zero.
\begin{thm}
For the bootstrap percolation model described above, $\nu$-almost
surely
\begin{align}
\lim_{q\rightarrow0}\,\mu\left[\tau_{0}\ge aq^{-\nicefrac{1}{d}}\right]\xrightarrow{a\rightarrow\infty}0,\label{eq:fa1jbpupperbound}\\
\lim_{q\rightarrow0}\,\mu\left[\tau_{0}\le aq^{-\nicefrac{1}{d}}\right]\xrightarrow{a\rightarrow0}0.\label{eq:fa1jbplowerbound}
\end{align}
\end{thm}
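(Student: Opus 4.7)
The lower bound \eqref{eq:fa1jbplowerbound} is dimension-independent and follows from the finite propagation speed of bootstrap percolation: only neighbours of already-empty sites become empty in one time step, so any site at graph distance greater than $t$ from every initially empty site is still occupied at time $t$. Hence
\[
\mu[\tau_{0}\le t]\le 1-(1-q)^{|B_{t}(0)|},
\]
where $B_t(0)$ denotes the graph ball of radius $t$ around the origin. Setting $t=aq^{-1/d}$ and using $|B_t(0)|=\Theta(a^d q^{-1})$ gives an upper bound of the form $1-e^{-\Theta(a^d)}+o_q(1)$, which tends to $0$ as $a\to 0$. The argument uses only $\omega_x\ge 1$ and so holds $\nu$-almost surely.

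For the upper bound \eqref{eq:fa1jbpupperbound} I would generalise the two-dimensional strategy of \eqref{eq:fa12d2bpupperbound}. Work in the box $\Lambda_L=[-L/2,L/2]^d\cap\zz^d$ with $L=aq^{-1/d}$ and partition it into $\Theta(L^d)$ disjoint sub-boxes of fixed side $r_0$. Declare a sub-box $B$ \emph{good} if every vertex of $B$ is easy ($\omega_x=1$) and at least one vertex of $B$ is initially empty. By the ergodic theorem, for $\nu$-a.e.\ $\omega$ the number of all-easy sub-boxes in $\Lambda_L$ is $\Theta(\pi^{r_0^d}L^d)$ for $L$ large, and the events "at least one initially empty site" in distinct sub-boxes are $\mu$-independent, each of $\mu$-probability comparable to $r_0^d q$. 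The $\mu$-probability of at least one good sub-box is thus bounded below by $1-\exp(-c\,\pi^{r_0^d}a^d)$ for a constant $c>0$, which tends to $1$ as $a\to\infty$.

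Inside a good sub-box emptiness spreads deterministically to every vertex in $O(r_0)$ bootstrap steps, because every site is easy and hence satisfies its threshold-$1$ constraint as soon as one of its neighbours is empty. This produces a fully empty cube of side $r_0$ at some location $y\in\Lambda_L$, a "droplet". The remaining step is to show that this droplet grows outward at a uniformly positive speed through the random environment, so that it engulfs the origin in $O(L)=O(aq^{-1/d})$ further steps, giving $\tau_0=O(aq^{-1/d})$ and hence \eqref{eq:fa1jbpupperbound}.

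The main obstacle is precisely this growth statement. Once the droplet exits the all-easy sub-box it meets a typical random environment containing difficult sites, and a straightforward propagation along easy-site paths fails when $\pi$ lies below the site-percolation threshold. The argument must therefore exploit the hypothesis $\omega_x\le d$: on each face of a droplet of side $\ell$, a law of large numbers gives $\sim\pi\ell^{d-1}$ easy sites that become empty in one step, and the resulting bumpy boundary produces difficult sites with at least $d$ empty neighbours that are then absorbed. A face-by-face analysis, together with a Borel--Cantelli bound ruling out rare large all-difficult clusters, should give expansion at uniformly positive speed. Making this precise in general dimension is the technical heart of the proof, and I expect it to follow the same scheme used in the 2D case, generalised by a face/edge/corner decomposition of the droplet boundary.
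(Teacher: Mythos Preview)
Your lower bound is correct and is exactly the paper's argument.

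For the upper bound, however, your strategy diverges from the paper's and leaves a genuine gap. The two-dimensional proof in the paper does \emph{not} grow a small droplet through the raw environment; instead it coarse-grains into boxes of a fixed side $L=L(\pi)$, declares a box \emph{good} when the disorder $\omega$ alone guarantees that an adjacent empty box can invade it (in $d=2$: every row and column contains an easy site), takes $L$ large enough that good boxes percolate with the origin in the infinite cluster, and then looks for an \emph{excellent} good box containing a single empty site within graph distance $\sim q^{-1/d}$ in that cluster. Emptiness then propagates box-by-box along the cluster in $O(L^{d})$ steps per box. In general dimension the paper imports the ``good box'' and propagation construction directly from \cite{martinellitonitelli2016towardsuniversality} (replacing ``internally spanned'' there by ``easy internally spanned''), so the random environment is handled entirely at the coarse-grained level and no shape-theorem is needed.

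Your proposed route---seed an all-easy cube of fixed side $r_{0}$ and then argue it expands at positive speed through the mixed environment---runs into exactly the obstacle you flag. With $r_{0}$ fixed, the seed has positive $\nu$-probability (independent of $q$) of being surrounded by a shell of difficult sites, in which case each face site has only one empty neighbour and the droplet does not grow at all; so one cannot condition on a single good seed. Even once the droplet is large, the ``face-by-face analysis with Borel--Cantelli control on all-difficult clusters'' you sketch is a nontrivial random-environment growth statement that you have not proved, and it is \emph{not} what the $d=2$ argument does. The box-percolation approach avoids this difficulty completely: goodness of a box depends only on $\omega$, is chosen to be supercritical, and guarantees deterministic one-step invasion from any empty neighbour.
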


\begin{thm}
For the KCM described above, $\nu$-almost surely there exist $\underline{\alpha}$
and $\overline{\alpha}$ (which may depend on $\omega$) such that
\begin{align}
\pp_{\mu}\left[\tau_{0}\ge q^{-\overline{\alpha}}\right] & \xrightarrow{q\rightarrow0}0,\label{eq:fa1jkcmupperbound}\\
\pp_{\mu}\left[\tau_{0}\le q^{-\underline{\alpha}}\right] & \xrightarrow{q\rightarrow0}0.\label{eq:fa1jkcmlowerbound}
\end{align}
\end{thm}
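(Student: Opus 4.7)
The plan is as follows. The strategy mirrors that of Theorem~\thmref{scalingoftimeforfa12}: both exponents arise from a Borel--Cantelli argument on the random environment $\omega$, combined with, respectively, an upper bound construction producing an efficient ``emptying channel'' and a lower bound bottleneck analysis.

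For the upper bound, I will first identify a fixed deterministic pattern $\omega^{\star}$ of thresholds on a finite box $Q \subset \zz^{d}$, chosen so that if $\omega$ coincides with $\omega^{\star}$ on some translate $Q+y$ containing the origin, then the origin can be emptied in time polynomial in $q^{-1}$. A natural choice for $\omega^{\star}$ consists of a connected backbone of threshold-$1$ sites reaching the origin, together with enough easy sites to seed an FA1f-type droplet. Since $\nu$ is a nondegenerate product measure, $\nu(\omega \equiv \omega^{\star} \text{ on } Q+y) > 0$ for each $y$, so by ergodicity $\nu$-a.s.\ some such translate of $\omega^{\star}$ is realised in a ball of random but a.s.\ finite radius $R_{+}(\omega) := \min\{|y| : \omega \equiv \omega^{\star} \text{ on } Q+y\}$ around the origin. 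Within this favourable region a canonical-path argument, together with the polynomial relaxation time of the homogeneous FA1f KCM, yields $\tau_{0} \le q^{-\overline{\alpha}(\omega)}$ with probability tending to one; the exponent $\overline{\alpha}$ depends on $R_{+}(\omega)$ and on the internal structure of the favourable box.

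For the lower bound, I will exploit the event that the origin lies in a ball where every site carries the hardest admissible threshold. Let $R_{-}(\omega) = \sup\{R \ge 0 : \omega_{x} = d \text{ for every } x \in [-R,R]^{d} \cap \zz^{d}\}$, which is $\nu$-a.s.\ finite and satisfies $\nu(R_{-} \ge R) = (1-\pi)^{(2R+1)^{d}} > 0$ for every $R$. The problem then reduces to a deterministic one: in a ball of radius $R$ whose sites all have threshold $d$, started from the equilibrium measure $\mu$, show that $\tau_{0} \ge q^{-\gamma(R,d)}$ for some $\gamma$ with $\gamma(R,d) \to \infty$ as $R \to \infty$. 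This will be established by a bottleneck (capacity) estimate: any trajectory emptying the origin must at some point pass through a configuration with an atypical deficit of particles in the difficult ball, whose $\mu$-probability is at most $q^{c(R,d)}$, because the FA$d$f bootstrap percolation requires many simultaneously empty sites near the centre of such a cluster before the centre itself becomes emptiable.

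The main obstacle will be the bottleneck estimate in dimension $d \ge 3$. In $d = 2$ the FA$2$f constraint admits fairly explicit combinatorial analysis, which is what is done in Theorem~\thmref{scalingoftimeforfa12}; for general $d$, I expect to invoke universality results for critical KCMs and adapt canonical-path/flow arguments from the homogeneous setting to a finite difficult cluster embedded in an easy sea. A secondary technical point is to ensure that the construction gives measurable, a.s.\ positive exponents $\overline{\alpha}(\omega)$ and $\underline{\alpha}(\omega)$, which follows from the explicit definitions of $R_{\pm}(\omega)$.
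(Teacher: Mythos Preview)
Your lower-bound plan is more elaborate than what the paper actually does. For the $d$-dimensional theorem the paper simply takes any $\underline{\alpha}<1$: since the origin is initially occupied with probability $1-q$ and, conditionally on being occupied, it is emptied at rate at most $q$, one has $\pp_\mu[\tau_0\le q^{-\underline{\alpha}}]\le q+q^{1-\underline{\alpha}}\to 0$. The paper remarks that a bottleneck argument \`a la the two-dimensional proof (and \cite{BaloghBollobasDuminilMorris1012sharpbpzd}) would also work, which is precisely your plan, but it opts for the trivial bound. So your lower bound is not wrong, just unnecessary for the stated theorem; note however that your $R_-$ can be $0$ with positive probability, in which case you would still need the trivial argument.

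Your upper-bound plan, on the other hand, has a real gap. A finite deterministic pattern $\omega^\star$ on a box $Q+y\ni 0$ cannot by itself yield a polynomial bound on $\tau_0$: with probability $(1-q)^{|Q|}\to 1$ the box $Q+y$ is entirely occupied at time $0$, and then no site inside it can be updated until an empty site enters from outside. The ``polynomial relaxation time of the homogeneous FA1f KCM'' is an infinite-volume statement and does not transfer to a finite region with occupied boundary, which is the relevant restricted dynamics here (Definition~\defref{restricteddynamics}). The paper's mechanism is essentially different and genuinely nonlocal: one fixes $L=L(\omega)$ so that ``good'' boxes of side $L$ (boxes whose easy sites allow propagation of an empty $(d{-}1)$-dimensional slice) percolate and the origin lies in the infinite cluster; along a self-avoiding path of good boxes one finds, at distance $l=q^{-dL^{d-1}-1}$, an ``essentially empty'' box, and then a canonical path (Lemma~\lemref{fa12path} adapted via \cite{martinellitonitelli2016towardsuniversality}) transports that empty slice back to the origin while never modifying more than $O(L^{d-1})$ sites at a time. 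It is this percolation structure and the long-range transport of a droplet that produce $\overline{\alpha}=\overline{\alpha}(L(\omega))$; a purely local favourable pattern around the origin does not.
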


\section{Mixed north-east and FA1$f$ KCM}

\subsection{Model and notation}

In this section we will consider again a kinetically constrained dynamics
in an environment with mixed constraints. This time, however, the
two constraints we will have are FA1f and north-east. That is, using
the same $\omega$ and $\nu$ as before, for $x$ such that $\omega_{x}=1$

\begin{align*}
c_{x}\left(\eta\right) & =\begin{cases}
1 & \sum_{y\sim x}\left(1-\eta_{y}\right)\ge1\\
0 & \text{otherwise}
\end{cases},
\end{align*}
and when $\omega_{x}=2$

\begin{align*}
c_{x}\left(\eta\right) & =\begin{cases}
1 & \eta_{x+e_{1}}=0\text{ and }\eta_{x+e_{2}}=0\\
0 & \text{otherwise}
\end{cases}.
\end{align*}
For the same $\mu$, we can define $\mathcal{L}$ as in \ref{eq:generatorofkcm}.
Note that $c_{x}$ (and therefore $\mathcal{L}$) are not the same
as those of the previous section, even though we use the same letters
to describe them. Again, the hitting time of a set $A$ will be denoted
by $\tau_{A}$, and $\tau_{0}=\tau_{\left\{ \eta_{0}=0\right\} }$.

We restrict ourselves to the case where $\pi$ is greater than the
critical probability for the Bernoulli site percolation on $\zz^{2}$,
denoted by $p^{\text{SP }}$. The critical probability for the oriented
percolation on $\zz^{2}$ will be denoted by $p^{\text{OP}}$.
\begin{rem}
Our choice of regime, where easy sites percolate, guarantees that
all sites are emptiable for the bootstrap percolation. The infinite
cluster $\mathcal{C}$ of easy sites is emptiable since it must contain
an empty site somewhere. The connected components of $\zz^{2}\setminus\mathcal{C}$
are finite, and have an emptiable boundary, so each of them will also
be emptied eventually.

This choice, however, is not the only one for which all sites are
emptiable. For any fixed environment $\omega$ there is a critical
value $q_{c}$ such that above $q_{c}$ all sites are emptiable and
below $q_{c}$ some sites remain occupied forever. For $\pi>p^{\text{SP}}$
we already know that $\nu$-almost surely $q_{c}=0$. In fact, the
same argument gives a slightly better result by allowing sites to
be difficult if they are also empty. This implies that $q_{c}\le1-\frac{1-p^{\text{SP}}}{1-\pi}$.
On the other hand, if there is an infinite up-right path of difficult
sites that are all occupied, this path could never be emptied. This
will imply that $q_{c}\ge1-\frac{p^{\text{OP}}}{1-\pi}$.
\end{rem}

\subsection{Results}

We will see for this model that it is possible to have an infinite
relaxation time, and still the tail of the distribution of $\tau_{0}$
decays exponentially, with a rate that scales polynomially with $q$.
\begin{thm}
Consider the kinetically constrained model described above, with $\pi>p^{\text{SP}}$
and $q\le q^{\text{OP}}$.
\begin{enumerate}
\item $\nu$-almost surely the spectral gap is $0$, i.e., the relaxation
time is infinite.
\item There exist two positive constants $c,C$ depending on $\pi$ and
a $\nu$-random variable $\tau$ such that
\begin{enumerate}
\item $\pp_{\mu}\left(\tau_{0}\ge t\right)\le e^{-\nicefrac{t}{\tau}}$
for all $t>0$,
\item $\nu\left(\tau\ge t\right)\le C\,t^{\frac{c}{\log q}}$ for $t$ large
enough.
\end{enumerate}
\end{enumerate}
\end{thm}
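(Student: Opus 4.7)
For part (1) I would argue that the spectral gap is killed by an invariant event. In the regime $q\le q^{\mathrm{OP}}$ the density $(1-\pi)(1-q)$ of difficult occupied sites exceeds the critical threshold for oriented site percolation on $\zz^{2}$, so the event
\[
A=\{\eta\in\Omega:\text{ there is an infinite up-right path of difficult occupied sites in }\eta\}
\]
satisfies $\mu(A)\in(0,1)$. No legal flip can change $\One_{A}$: a site $y$ on such a path has either $\eta_{y+e_{1}}=1$ or $\eta_{y+e_{2}}=1$ (the next site of the path), so its North-East constraint fails and $y$ is frozen; conversely, a new infinite up-right difficult path cannot appear after a single flip, since for a difficult $y$ to be flipped one needs $\eta_{y+e_{1}}=\eta_{y+e_{2}}=0$, which forbids $y$ from lying on an up-right path after the flip, and easy sites do not contribute to such paths. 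Hence $\mathcal{D}(\One_{A})=0$ while $\mathrm{Var}_{\mu}(\One_{A})=\mu(A)(1-\mu(A))>0$, forcing the spectral gap to vanish and the relaxation time to be infinite $\nu$-almost surely.

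For part (2) I would run a coarse-graining on the environment. Since $\pi>p^{\mathrm{SP}}$, the easy sites contain a unique infinite cluster $\mathcal{C}$ and the difficult sites are subcritical for Bernoulli site percolation on $\zz^{2}$, so their clusters have exponentially small diameter and $\mathcal{C}$-circuits surrounding the origin exist at exponentially decaying scale. Let $R=R(\omega)$ be the smallest integer $r$ such that (i) every difficult cluster intersecting $B_{2r}=[-2r,2r]^{2}$ is contained in $B_{3r}$, and (ii) $\partial B_{3r}$ meets $\mathcal{C}$ along a circuit surrounding the origin. Each condition fails at scale $r$ with probability at most $Ce^{-\alpha r}$, yielding $\nu(R\ge r)\le Ce^{-\alpha r}$ with constants $\alpha=\alpha(\pi)>0$ and $C=C(\pi)$. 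I then set $\tau=q^{-\beta R}$ for a constant $\beta=\beta(\pi)$ to be chosen.

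The core analytic step is a finite-volume bound of the form $T_{\mathrm{rel}}(B_{3R},\omega)\le q^{-\beta R}$, which I would prove by a canonical-paths argument: any configuration in $B_{3R}$ can be brought to the all-empty configuration along a self-avoiding path of length $O(R)$ from $\partial B_{3R}$ to the origin that respects the east-before-north ordering imposed by each difficult cluster it visits, each step contributing a factor $q^{-1}$ to the congestion. Once this is in hand, a standard restart argument using reversibility (with the factor $\mu(\eta_{0}=0)^{-1}=q^{-1}$ absorbed into $\beta$) gives $\pp_{\mu}(\tau_{0}\ge t)\le e^{-t/\tau}$. Combining with the tail of $R$,
\[
\nu(\tau\ge t)=\nu\bigl(R\ge\log t/(\beta|\log q|)\bigr)\le C\exp\bigl(-\alpha\log t/(\beta|\log q|)\bigr)=C\,t^{c/\log q}
\]
with $c=\alpha/\beta>0$, which is exactly (b).

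The hardest point will be the canonical-paths estimate. The North-East constraint forces a strict ordering on how certain sites must be emptied, and a naive path can pay too much inside each difficult cluster. Making the resulting bound linear in $R$ (rather than $R^{2}$) requires exploiting the geometric structure encoded in the definition of $R$: difficult clusters inside $B_{3R}$ are isolated, of diameter $O(R)$, and surrounded by easy sites of $\mathcal{C}$, so fast FA1f moves on $\mathcal{C}$ can bridge them while only a bounded number of North-East-compatible moves is needed within each cluster. Calibrating $\beta$ so that the canonical-path congestion estimate closes is the central technical step.
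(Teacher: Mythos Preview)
Your argument for part~(1) has a genuine gap: the event $A$ you construct is $\nu$-almost surely empty. In the regime $\pi>p^{\mathrm{SP}}$ the density of difficult sites is $1-\pi<1-p^{\mathrm{SP}}$, and since on $\zz^{2}$ one has $1-p^{\mathrm{SP}}<p^{\mathrm{OP}}$ (numerically $p^{\mathrm{SP}}\approx 0.593$, $p^{\mathrm{OP}}\approx 0.705$), the difficult sites are subcritical for oriented percolation. Hence $\nu$-a.s.\ there is \emph{no} infinite up-right path of difficult sites at all, so $\mu(A)=0$ irrespective of $\eta$ and $\One_{A}$ is constant. The condition $(1-\pi)(1-q)>p^{\mathrm{OP}}$ that you invoke is simply incompatible with the hypotheses of the theorem. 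The paper proceeds differently: $\nu$-a.s.\ there exist arbitrarily large squares consisting entirely of difficult sites; any local test function supported in such a square sees only North--East constraints, so its Dirichlet form coincides with that of the pure North--East model. Since the latter has spectral gap $0$ when $q\le 1-p^{\mathrm{OP}}$, one can drive the Rayleigh quotient to $0$.

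Your strategy for part~(2) is close in spirit to the paper's, but the finite-box reduction does not work as stated. The comparison inequality that lets you pass from the full Dirichlet form to a restricted one forces \emph{occupied} boundary conditions (the most constrained ones). With occupied boundary on a finite box the all-occupied configuration is absorbing for both the FA1f and the North--East constraints, so the restricted chain on $B_{3R}$ is not ergodic and $T_{\mathrm{rel}}(B_{3R},\omega)=\infty$; your canonical-paths bound cannot close. The paper avoids this by restricting to an \emph{infinite} subgraph $H=\mathcal{C}_{0}\cup\mathcal{V}$, where $\mathcal{C}_{0}$ is the finite component of the origin enclosed by the infinite easy cluster and $\mathcal{V}$ is an infinite self-avoiding path of easy sites starting from $\partial\mathcal{C}_{0}$. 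On $\mathcal{V}$ the restricted dynamics dominates FA1f on $\zz_{+}$, which has gap of order $q^{3}$ even with occupied boundary; this is what keeps $\gamma_{H}>0$. A two-block decomposition ($\mathcal{V}$ versus $\mathcal{C}_{0}$) together with a bisection estimate $\gamma^{\mathrm{NE}}_{[L]^{2}}\ge q^{O(L)}$ for the North--East model on a box with \emph{empty} boundary then gives $\gamma_{H}\ge q^{-O(L)}$ with $L=\operatorname{diam}\mathcal{C}_{0}$. The exponential tail $\pp_{\mu}(\tau_{0}\ge t)\le e^{-t/\tau}$ comes not from a restart argument but from the variational quantity $\overline{\tau}_{A}=\sup_{f\in V_{A}}\mu(f^{2})/\mathcal{D}f$ and the bound $\overline{\tau}_{A}\le (1+\mu(A))/(\mu(A)\gamma_{H})$; the final tail $\nu(\tau\ge t)\le C\,t^{c/\log q}$ then follows from the exponential tail of $L$, exactly as in your last display.
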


\section{Some tools}

In this section we will present some tools that will help us analyze
the kinetically constrained models that we have introduced. We will
start by considering a general state space $\Omega$, and any Markov
process on $\Omega$ that is reversible with respect to a certain
measure $\mu$. We denote its generator by $\mathcal{L}$ and the
associated Dirichlet form by $\mathcal{D}$. We will consider, for
some event $A$, its hitting time $\tau_{A}$. With some abuse of
notation, we use $\tau_{A}$ also for the $\mu$-random variable giving
for every state $\eta\in\Omega$ the expected hitting time at $A$
starting from that state:
\[
\tau_{A}\left(\eta\right)=\ee_{\eta}\left(\tau_{A}\right).
\]

$\tau_{A}\left(\eta\right)$ satisfies the following Poisson problem:
\begin{align}
\mathcal{L}\tau_{A} & =-1\text{ on }A^{c},\label{eq:poissonproblem}\\
\tau_{A} & =0\text{ on }A.\nonumber 
\end{align}

By multiplying both sides of the equation by $\tau_{A}$ and integrating
with respect to $\mu$, we obtain
\begin{cor}
\label{cor:dirichletequalsexpectation}$\mu\left(\tau_{A}\right)=\mathcal{D}\tau_{A}$.
\end{cor}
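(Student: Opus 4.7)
The plan is to apply the defining identity of the Dirichlet form for a reversible Markov process and exploit the piecewise description of $\mathcal{L}\tau_A$ given by the Poisson problem \eqref{poissonproblem}.

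First I would recall that, since the process is reversible with respect to $\mu$, the Dirichlet form admits the representation
\begin{equation*}
\mathcal{D}(f) \;=\; -\,\mu\bigl(f\,\mathcal{L}f\bigr)
\end{equation*}
for every $f$ in the domain of $\mathcal{L}$. Specializing to $f=\tau_A$, the statement we want reduces to checking
\begin{equation*}
-\,\mu\bigl(\tau_A\,\mathcal{L}\tau_A\bigr) \;=\; \mu(\tau_A).
\end{equation*}

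Next I would split the state space $\Omega$ into $A$ and its complement and evaluate the integrand on each part using the Poisson problem. On $A$, by definition $\tau_A(\eta)=0$, so the integrand $\tau_A\,\mathcal{L}\tau_A$ vanishes identically, and there is no contribution. On $A^c$, the identity $\mathcal{L}\tau_A=-1$ gives $\tau_A\,\mathcal{L}\tau_A=-\tau_A$, so
\begin{equation*}
-\,\mu\bigl(\tau_A\,\mathcal{L}\tau_A\bigr) \;=\; \mu\bigl(\tau_A\,\One_{A^c}\bigr)\;=\;\mu(\tau_A),
\end{equation*}
where in the last step I use once more that $\tau_A$ equals $0$ on $A$ to extend the integral back to all of $\Omega$. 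Combining the two pieces yields the corollary.

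The only point that requires some care is justifying the manipulation $\mathcal{D}(\tau_A)=-\mu(\tau_A\mathcal{L}\tau_A)$, i.e.\ that $\tau_A$ is sufficiently regular (say in $L^2(\mu)$ and in the domain of $\mathcal{L}$) for this integration-by-parts-type identity to hold. In the settings considered in this paper the relevant event $A=\{\eta_0=0\}$ has $\mu(A)>0$, and the Markov process has a bounded generator when restricted to local functions, so this regularity can be verified by a standard truncation/localization argument; I would only sketch this if pressed. Modulo this technicality, the proof is a one-line calculation once the Poisson problem \eqref{poissonproblem} is in hand.
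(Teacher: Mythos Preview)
Your argument is correct and is exactly the one the paper uses: multiply the Poisson equation by $\tau_A$, integrate against $\mu$, and use that $\tau_A$ vanishes on $A$ together with $\mathcal{D}f=-\mu(f\mathcal{L}f)$. The paper states this in one line without the regularity remark you add.
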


Rewriting this corollary as $\mu\left(\tau_{A}\right)=\frac{\mu\left(\tau_{A}\right)^{2}}{\mathcal{D}\tau_{A}}$,
it resembles a variational principle introduced in \cite{AsselhaDaiPra2001quasistationary}
that will be useful in the following. In order to formulate it we
will need to introduce some notation.
\begin{defn}
For an event $A\subseteq\Omega$, $V_{A}$ is the set of all functions
in the domain of $\mathcal{L}$ that vanish on the event $A$. Note
that, in particular, $\tau_{A}\in V_{A}$.
\end{defn}

\begin{defn}
\label{def:taubar}For an event $A\subseteq\Omega$,
\[
\overline{\tau}_{A}=\sup_{0\neq f\in V_{A}}\,\frac{\mu\left(f^{2}\right)}{\mathcal{D}f}.
\]
\end{defn}

The following proposition is given in the first equation of the proof
of Theorem 2 in \cite{AsselhaDaiPra2001quasistationary}:
\begin{prop}
\label{prop:exponentialdecayofhittingtime} $\pp_{\mu}\left[\tau_{A}>t\right]\le e^{-t/\overline{\tau}_{A}}$.
\end{prop}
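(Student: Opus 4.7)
The plan is to estimate the tail of $\tau_A$ by analyzing the sub-Markovian semigroup killed upon reaching $A$ and exploiting the variational characterization in \defref{taubar}. First I introduce the $\mu$-random variable $u(t,\eta) = \pp_{\eta}\!\left[\tau_{A} > t\right]$. By the Markov property, $u$ solves the backward Kolmogorov equation $\partial_{t} u = \mathcal{L} u$ on $A^{c}$, with boundary condition $u(t,\cdot) \equiv 0$ on $A$ for every $t \ge 0$ (since $\tau_{A}$ vanishes on $A$) and initial datum $u(0,\cdot) = \One_{A^{c}}$. In particular $u(t,\cdot) \in V_{A}$ for every $t \ge 0$.

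The second step is to control the evolution of $\phi(t) := \mu\!\left(u(t)^{2}\right)$. Using reversibility of $\mathcal{L}$ with respect to $\mu$, so that $\mu(f\,\mathcal{L}f) = -\mathcal{D}(f)$, the function $\phi$ satisfies
\begin{equation*}
\phi'(t) = 2\,\mu\!\left(u(t)\,\mathcal{L}u(t)\right) = -2\,\mathcal{D}\!\left(u(t)\right).
\end{equation*}
Since $u(t,\cdot) \in V_{A}$, \defref{taubar} yields $\mathcal{D}(u(t)) \ge \mu\!\left(u(t)^{2}\right)/\overline{\tau}_{A} = \phi(t)/\overline{\tau}_{A}$, and hence $\phi'(t) \le -\tfrac{2}{\overline{\tau}_{A}}\,\phi(t)$. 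Gr\"onwall's inequality, together with $\phi(0) = \mu(\One_{A^{c}}) \le 1$, then gives $\phi(t) \le e^{-2t/\overline{\tau}_{A}}$.

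Finally, Cauchy--Schwarz against the probability measure $\mu$ produces
\begin{equation*}
\pp_{\mu}\!\left[\tau_{A} > t\right] = \mu\!\left(u(t)\right) \le \sqrt{\mu\!\left(u(t)^{2}\right)} = \sqrt{\phi(t)} \le e^{-t/\overline{\tau}_{A}},
\end{equation*}
which is precisely the announced bound.

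The main obstacle I anticipate is purely technical rather than conceptual: one must justify that the formal manipulations above are legitimate for the KCMs under consideration. Namely, that $t \mapsto u(t,\cdot)$ is $L^{2}(\mu)$-differentiable, lies in the domain of $\mathcal{L}$, that exchanging $\partial_{t}$ with the $\mu$-integral is valid, and that $u(t,\cdot)$ is admissible in the variational problem defining $\overline{\tau}_{A}$ (so that the lower bound $\mathcal{D}(u(t)) \ge \phi(t)/\overline{\tau}_{A}$ really applies). For bounded-rate spin systems on $\zz^{d}$ these points are standard and can be handled either by approximation with local functions in $V_{A}$, or by first proving the statement for finite-volume truncations with Dirichlet boundary condition on $A$ and then passing to the thermodynamic limit.
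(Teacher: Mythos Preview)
Your argument is correct and is the standard $L^{2}$ semigroup computation: the killed semigroup contracts in $L^{2}(\mu)$ at rate $\overline{\tau}_{A}^{-1}$ by the variational definition of $\overline{\tau}_{A}$, and Cauchy--Schwarz converts this into the stated $L^{1}$ bound. The paper does not give its own proof of this proposition at all; it simply cites the first equation in the proof of Theorem~2 of Asselah--Dai~Pra~\cite{AsselhaDaiPra2001quasistationary}, where essentially the same semigroup argument appears. So there is nothing to compare: you have supplied the proof that the paper outsources, and the technical caveats you flag (domain issues, differentiability of $t\mapsto u(t)$ in $L^{2}$, finite-volume approximation) are exactly the ones that need to be handled and are indeed routine for bounded-rate spin systems.
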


\begin{rem}
In particular, \propref{exponentialdecayofhittingtime} implies that
$\mu\left(\tau_{A}\right)\le\bar{\tau}_{A}$. This, however, could
be derived much more simply from \corref{dirichletequalsexpectation}
\textendash 
\[
\mu\left(\tau_{A}\right)^{2}\le\mu\left(\tau_{A}^{2}\right)\le\overline{\tau}_{A}\mathcal{D}\tau_{A}=\overline{\tau}_{A}\mu\left(\tau_{A}\right).
\]
Note that whenever $\tau_{A}$ is not constant on $A^{c}$ this inequality
is strict. Thus on one hand \propref{exponentialdecayofhittingtime}
gives an exponential decay of $\pp_{\mu}\left[\tau_{A}>t\right]$,
which is stronger than the information on the expected value we can
obtain from the Poisson problem in \eqref{poissonproblem}. On the
other hand, $\overline{\tau}_{A}$ could be longer than the actual
expectation of $\tau_{A}$.
\end{rem}

In order to bound the hitting time from below we will formulate a
variational principle that will characterize $\tau_{A}$.
\begin{defn}
For $f\in V_{A}$, let
\[
\mathcal{T}f=2\mu\left(f\right)-\mathcal{D}f.
\]
\end{defn}

\begin{prop}
\label{prop:variationalprincipleforhittingtime}$\tau_{A}$ maximizes
$\mathcal{T}$ in $V_{A}$. Moreover, $\mu\left(\tau_{A}\right)=\sup_{f\in V_{A}}\mathcal{T}f$.
\end{prop}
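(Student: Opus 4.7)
The plan is to use the Poisson problem \eqref{eq:poissonproblem} to show that $\tau_A$ is a critical point of the quadratic functional $\mathcal{T}$, and then use the positivity of $\mathcal{D}$ to upgrade this to a maximum. This is the same ``completion of the square'' trick that underlies the Dirichlet variational principle for harmonic functions.

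First, I would compute $\mathcal{T}(\tau_A)$ explicitly. By \corref{dirichletequalsexpectation} we have $\mathcal{D}\tau_A = \mu(\tau_A)$, so
\[
\mathcal{T}(\tau_A) \;=\; 2\mu(\tau_A) - \mathcal{D}\tau_A \;=\; \mu(\tau_A).
\]
Hence, provided the maximization claim is established, the value of the supremum is automatically $\mu(\tau_A)$.

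Next, pick an arbitrary $f \in V_A$ and set $h = f - \tau_A$, which also lies in $V_A$ since both $f$ and $\tau_A$ vanish on $A$. Writing $\mathcal{E}(\cdot,\cdot)$ for the symmetric bilinear polarization of the Dirichlet form, reversibility gives $\mathcal{E}(g,\tau_A) = -\mu(g\,\mathcal{L}\tau_A)$. Since $\mathcal{L}\tau_A = -1$ on $A^c$ and $h = 0$ on $A$, this collapses to $\mathcal{E}(h,\tau_A) = \mu(h)$. Expanding
\[
\mathcal{D}f \;=\; \mathcal{D}\tau_A + 2\mathcal{E}(\tau_A,h) + \mathcal{D}h
\]
and plugging into the definition of $\mathcal{T}$, the contributions $\pm 2\mu(h)$ cancel and we are left with
\[
\mathcal{T}f \;=\; \mathcal{T}\tau_A - \mathcal{D}h \;\le\; \mathcal{T}\tau_A,
\]
which is the desired inequality (with equality iff $\mathcal{D}h = 0$, i.e.\ iff $h$ is constant on each irreducible component and hence $h\equiv 0$ using $h|_A = 0$).

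The only mild obstacle is really a bookkeeping point: one needs $\tau_A$ to lie in the domain of $\mathcal{L}$ so that $\mathcal{E}(\tau_A, h)$ makes sense and the polarization identity holds against arbitrary $h \in V_A$. This is already built into the formulation of the Poisson problem \eqref{eq:poissonproblem} used in the excerpt, and in our finite-range KCM setting it can also be checked by a standard truncation to a finite box, taking limits along a sequence of cut-offs. Once this is granted, the argument above is essentially a one-line algebraic manipulation.
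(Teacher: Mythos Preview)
Your proof is correct and is essentially identical to the paper's: both write $f=\tau_A+h$ with $h\in V_A$, use the Poisson equation $\mathcal{L}\tau_A=-1$ on $A^c$ together with $h|_A=0$ to see that the cross term $2\mathcal{E}(\tau_A,h)$ cancels $2\mu(h)$, and conclude $\mathcal{T}f=\mathcal{T}\tau_A-\mathcal{D}h\le\mathcal{T}\tau_A$, with the value $\mu(\tau_A)$ coming from \corref{dirichletequalsexpectation}. The only cosmetic difference is that you phrase the cross term via the polarized bilinear form while the paper writes it as $2\mu(\delta\,\mathcal{L}\tau_A)$.
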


\begin{proof}
Consider $f\in V_{A}$, and let $\delta=f-\tau_{A}$. Using the self-adjointness
of $\mathcal{L}$, \eqref{poissonproblem}, and the fact that $\delta\in V_{A}$
we obtain
\begin{align*}
\mathcal{T}f & =\mathcal{T}\left(\tau_{A}+\delta\right)\\
 & =2\mu\left(\tau_{A}\right)+2\mu\left(\delta\right)-\mathcal{D}\tau_{A}-\mathcal{D}\delta+2\mu\left(\delta\mathcal{L}\tau\right)\\
 & =\mathcal{T}\tau_{A}-\mathcal{D}\delta.
\end{align*}
By the positivity of the Dirichlet form, $\mathcal{T}$ is indeed
maximized by $\tau_{A}$. Finally, by \corref{dirichletequalsexpectation},
\begin{align*}
\sup_{f\in V_{A}}\mathcal{T}f & =\mathcal{T}\tau_{A}=2\mu\left(\tau_{A}\right)-\mathcal{D}\tau_{A}=\mu\left(\tau_{A}\right).
\end{align*}
\end{proof}
As an immediate consequence we can deduce the monotonicity of the
expected hitting time:
\begin{cor}
\label{cor:monotonicityofhittingtime}Let $\mathcal{D}$ and $\mathcal{D}^{\prime}$
be the Dirichlet forms of two reversible Markov processes defined
on the same space, such that both share the same equilibrium measure
$\mu$. We denote the expectations with respect to these processes
starting at equilibrium by $\ee_{\mu}$ and $\ee_{\mu}^{\prime}$.
Assume that the domain of $\mathcal{D}$ is contained in the domain
of $\mathcal{D}^{\prime}$, and that for every $f\in\text{Dom}\mathcal{D}$
\[
\mathcal{D}f\le\mathcal{D}^{\prime}f.
\]
Then, for an event $A\subseteq\Omega$,
\[
\ee_{\mu}\tau_{A}\le\ee_{\mu}^{\prime}\tau_{A}.
\]
\end{cor}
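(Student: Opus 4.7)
The approach is to apply \propref{variationalprincipleforhittingtime} to both processes and compare the resulting variational problems. By that proposition,
\[
\ee_\mu \tau_A = \sup_{f\in V_A^{\mathcal{D}}}\bigl[2\mu(f)-\mathcal{D}f\bigr], \qquad \ee'_\mu\tau_A = \sup_{f\in V_A^{\mathcal{D}'}}\bigl[2\mu(f)-\mathcal{D}'f\bigr],
\]
where $V_A^{\mathcal{D}}$ and $V_A^{\mathcal{D}'}$ denote the admissible test functions (vanishing on $A$ and lying in the domain of the respective generator). Once both expectations are written in this form, the corollary reduces to a comparison of the two suprema.

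Concretely, the hypothesis on the inclusion of domains implies $V_A^{\mathcal{D}}\subseteq V_A^{\mathcal{D}'}$, so the feasible set of the second variational problem is enlarged, while the pointwise inequality $\mathcal{D}f\le\mathcal{D}'f$ on the common domain translates into a pointwise comparison of the two functionals being supremised. I would take an arbitrary $f\in V_A^{\mathcal{D}}$, substitute it into the variational representation of $\ee'_\mu\tau_A$ (which is permitted by the first hypothesis), then invoke the Dirichlet comparison, and finally pass to the supremum over $f$, matching the resulting bound against the variational representation of $\ee_\mu\tau_A$.

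There is no genuine obstacle: all the substantive content is already contained in \propref{variationalprincipleforhittingtime}, and the corollary is nothing more than the monotonicity built into a variational supremum under enlargement of the feasible set and perturbation of the functional. The one item requiring attention is tracking the direction of each inequality as the estimates are chained, so that the individual bounds compose correctly; this is purely a bookkeeping matter, which is presumably why the author presents the corollary as an \emph{immediate} consequence of the preceding proposition.
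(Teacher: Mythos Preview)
Your approach via \propref{variationalprincipleforhittingtime} is exactly what the paper intends (the corollary is presented with no proof beyond the phrase ``immediate consequence''), but the ``bookkeeping'' you wave off is precisely where the argument fails. Write $\mathcal{T}f=2\mu(f)-\mathcal{D}f$ and $\mathcal{T}'f=2\mu(f)-\mathcal{D}'f$. The hypothesis $\mathcal{D}f\le\mathcal{D}'f$ gives $\mathcal{T}f\ge\mathcal{T}'f$ on the common domain, while the domain hypothesis gives $V_A^{\mathcal{D}}\subseteq V_A^{\mathcal{D}'}$. These two effects pull in \emph{opposite} directions, not the same one. Carrying out your recipe: for $f\in V_A^{\mathcal{D}}$ you obtain $\ee_\mu'\tau_A\ge\mathcal{T}'f$, but since $\mathcal{T}'f\le\mathcal{T}f$ this does \emph{not} chain to $\ee_\mu'\tau_A\ge\mathcal{T}f$, and you cannot take the supremum to reach $\ee_\mu'\tau_A\ge\ee_\mu\tau_A$.

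In fact the inequality as printed is false: take $\mathcal{L}'=2\mathcal{L}$, so that $\mathcal{D}'=2\mathcal{D}\ge\mathcal{D}$ with identical domains, yet the primed process hits $A$ in half the time, giving $\ee_\mu'\tau_A=\tfrac{1}{2}\ee_\mu\tau_A<\ee_\mu\tau_A$. The corollary evidently carries a sign slip; either the Dirichlet hypothesis or the conclusion should be reversed. With the corrected hypothesis $\mathcal{D}f\ge\mathcal{D}'f$ on $\text{Dom}\,\mathcal{D}\subseteq\text{Dom}\,\mathcal{D}'$ (keeping the stated conclusion), your outline goes through cleanly: for $f\in V_A^{\mathcal{D}}\subseteq V_A^{\mathcal{D}'}$ one has $\ee_\mu'\tau_A\ge\mathcal{T}'f\ge\mathcal{T}f$, and the supremum over $f$ gives $\ee_\mu'\tau_A\ge\ee_\mu\tau_A$. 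So your plan is the right one for the intended statement, but tracking the signs is the entire content here, not a side issue.
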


We will now restrict ourselves to kinetically constrained models.
Fix a graph $G$ and take $\Omega=\left\{ 0,1\right\} ^{G}$. For
every vertex $x\in G$ and a state $\eta\in\Omega$ we define a constraint
$c_{x}\left(\eta\right)\in\left\{ 0,1\right\} $. The constraint does
not depend on the value at $x$, and is non-increasing in $\eta$.
The equilibrium measure $\mu$ is a product measure. The generator
of this process, operating on a local function $f$, is given by
\[
\mathcal{L}f=\sum_{x}c_{x}\left(\mu_{x}f-f\right)
\]
and its Dirichlet form by
\[
\mathcal{D}f=\mu\left(\sum_{x}c_{x}\text{Var}_{x}f\right).
\]

Fix a subgraph $H$ of $G$, and denote the complement of $H$ in
$G$ by $H^{c}$.

We will compare the dynamics of this KCM to the dynamics restricted
to $H$, with boundary conditions that are the most constrained ones.
\begin{defn}
\label{def:restricteddynamics}The restricted dynamics on $H$ is
the KCM defined by the constraints
\[
c_{x}^{H}\left(\eta\right)=c_{x}\left(\eta^{H}\right),
\]
where, for $\eta\in\left\{ 0,1\right\} ^{H}$, $\eta^{H}$ is the
configuration given by

\[
\eta^{H}(x)=\begin{cases}
\eta_{x} & x\in H\\
1 & x\in H^{c}
\end{cases}.
\]
We will denote the corresponding generator by $\mathcal{L}_{H}$ and
its Dirichlet form by $\mathcal{D}_{H}$.
\end{defn}

\begin{claim}
\label{claim:restrictingdirichlet}For any $f$ in the domain of $\mathcal{L}$
\[
\mathcal{D}f\ge\mu_{H^{c}}\mathcal{D}_{H}f.
\]
\end{claim}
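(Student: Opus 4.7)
The plan is to expand both sides in the definition of the Dirichlet form and reduce the inequality to a pointwise comparison of the constraints on $H$. Starting from
\[
\mathcal{D}f = \mu\Big(\sum_{x\in G} c_x \,\mathrm{Var}_x f\Big),
\]
I would first drop all terms with $x\in H^c$: each summand is nonnegative, so
\[
\mathcal{D}f \ge \mu\Big(\sum_{x\in H} c_x \,\mathrm{Var}_x f\Big).
\]

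The key step is a pointwise monotonicity comparison. For any $\eta\in\Omega$, the extension $\eta^{H}$ (which equals $\eta$ on $H$ and $1$ on $H^c$) satisfies $\eta \le \eta^{H}$ coordinatewise. Since the KCM framework assumes that each $c_x$ is non-increasing in $\eta$, this gives $c_x(\eta) \ge c_x(\eta^{H}) = c_x^{H}(\eta)$ for every $x\in H$. Plugging this in yields
\[
\mathcal{D}f \ge \mu\Big(\sum_{x\in H} c_x^{H} \,\mathrm{Var}_x f\Big).
\]

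Finally, I would identify the right-hand side with $\mu_{H^c}\mathcal{D}_H f$ using Fubini. Note that $c_x^{H}(\eta)$ depends only on $\eta|_H$ by definition, and $\mathrm{Var}_x f$ is a function of the full configuration that can be viewed, for fixed $\eta_{H^c}$, as an object depending on $\eta|_H$ alone. Writing $\mu = \mu_{H^c}\otimes \mu_H$ and pulling the $\mu_{H^c}$ average outside,
\[
\mu\Big(\sum_{x\in H} c_x^{H}\, \mathrm{Var}_x f\Big) = \mu_{H^c}\Big(\mu_H\Big(\sum_{x\in H} c_x^{H}\,\mathrm{Var}_x f\Big)\Big) = \mu_{H^c}\,\mathcal{D}_H f,
\]
where in the last step I recognize the inner expectation as $\mathcal{D}_H$ applied to the section of $f$ obtained by freezing $\eta_{H^c}$, in accordance with Definition~\ref{def:restricteddynamics}.

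There is no real obstacle here; the statement is essentially a bookkeeping lemma. The only point that requires a moment's care is to verify that the ``most constrained'' boundary condition (setting the outside of $H$ to $1$) is indeed the right choice to produce an inequality in the correct direction, which is precisely where the monotonicity hypothesis on $c_x$ enters.
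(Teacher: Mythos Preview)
Your proof is correct and follows the same approach as the paper: drop the $H^c$ terms by nonnegativity, use the pointwise inequality $c_x^H \le c_x$ (which the paper states without justification, whereas you correctly derive it from monotonicity of the constraints), and identify the resulting expression with $\mu_{H^c}\mathcal{D}_H f$. Your version is simply more detailed than the paper's two-line argument.
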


\begin{proof}
$c_{x}^{H}\le c_{x}$ and $\text{Var}_{x}f$ is positive, therefore
\begin{align*}
\mathcal{D}f & =\mu\left(\sum_{x}c_{x}\text{Var}_{x}f\right)\ge\mu\left(\sum_{x\in H}c_{x}^{H}\text{Var}_{x}f\right).
\end{align*}
\end{proof}
The next claim will allow us to relate the spectral gap of the restricted
dynamics to the variational principles discussed earlier.
\begin{claim}
\label{claim:gapofrestricteddynamics}Let $\gamma_{H}$ be the spectral
gap of $\mathcal{L}_{H}$, and fix an event $A$ that depends only
on the occupation of the vertices of $H$. Then for all $f\in V_{A}$
\end{claim}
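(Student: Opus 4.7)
The plan is to derive a Poincaré-type bound of the form
$$\mu(A)\,\mu(f^{2}) \le \frac{1}{\gamma_{H}}\mathcal{D}f$$
(the natural conclusion in view of how $\overline{\tau}_{A}$ enters \propref{exponentialdecayofhittingtime}) by a conditioning-plus-Poincaré argument, coupled with the standard Cauchy--Schwarz trick that converts a variance bound into a bound on the full second moment when the function vanishes on a positive-measure set.

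First I would condition on the $H^{c}$-configuration. For each $\xi\in\{0,1\}^{H^{c}}$ consider the slice $f_{\xi}(\cdot)=f(\cdot,\xi)$, viewed as a function on $\{0,1\}^{H}$. Because $A$ depends only on the occupations in $H$, the identity $f|_{A}=0$ forces $f_{\xi}|_{A}=0$ for every $\xi$. The generator $\mathcal{L}_{H}$ from \defref{restricteddynamics} uses the constraints $c_{x}^{H}(\eta)=c_{x}(\eta^{H})$ with the sites of $H^{c}$ filled by $1$'s, so the restricted dynamics on $\{0,1\}^{H}$ is the same for every $\xi$; by hypothesis its spectral gap is $\gamma_{H}$, and the Poincaré inequality gives
$$\text{Var}_{\mu_{H}}(f_{\xi})\le\frac{1}{\gamma_{H}}\mathcal{D}_{H}f_{\xi}.$$

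To upgrade this to a bound on $\mu_{H}(f_{\xi}^{2})$, I would exploit that $f_{\xi}=0$ on $A$ to write $\mu_{H}(f_{\xi})=\mu_{H}(f_{\xi}\One_{A^{c}})$, whence by Cauchy--Schwarz
$$\mu_{H}(f_{\xi})^{2}\le\mu_{H}(A^{c})\,\mu_{H}(f_{\xi}^{2}),\qquad\text{Var}_{\mu_{H}}(f_{\xi})\ge\mu_{H}(A)\,\mu_{H}(f_{\xi}^{2})=\mu(A)\,\mu_{H}(f_{\xi}^{2}),$$
the last equality using $\mu_{H}(A)=\mu(A)$ since $A$ depends only on $H$. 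Combining the two displays and integrating against $\mu_{H^{c}}$ yields
$$\mu(A)\,\mu(f^{2})\le\frac{1}{\gamma_{H}}\,\mu_{H^{c}}\mathcal{D}_{H}f\le\frac{1}{\gamma_{H}}\mathcal{D}f,$$
the final inequality being \claimref{restrictingdirichlet}. I do not anticipate a serious obstacle; the only point requiring care is the interpretation of $\mathcal{D}_{H}$ as a fibrewise Dirichlet form on $\{0,1\}^{H}$, so that the Poincaré inequality at gap $\gamma_{H}$ may be applied slice-by-slice before the outer expectation over $\xi$ is taken.
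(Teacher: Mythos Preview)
Your argument is correct and follows essentially the same outline as the paper: slice over the $H^{c}$-configuration, apply the Poincar\'e inequality for $\mathcal{L}_{H}$ on each fibre, then invoke \claimref{restrictingdirichlet} to pass from $\mu_{H^{c}}\mathcal{D}_{H}f$ to $\mathcal{D}f$. The one genuine difference is the inequality you use to convert the variance bound into a second-moment bound. The paper uses Chebyshev, obtaining $\text{Var}_{H}f\ge\mu(A)(\mu_{H}f)^{2}$; after substituting $(\mu_{H}f)^{2}=\mu_{H}(f^{2})-\text{Var}_{H}f$ and rearranging, this yields $\text{Var}_{H}f\ge\frac{\mu(A)}{1+\mu(A)}\mu_{H}(f^{2})$. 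Your Cauchy--Schwarz step $(\mu_{H}f_{\xi})^{2}\le\mu_{H}(A^{c})\mu_{H}(f_{\xi}^{2})$ gives $\text{Var}_{H}f_{\xi}\ge\mu(A)\mu_{H}(f_{\xi}^{2})$ directly, which is strictly sharper. In fact the claim in the paper has two conclusions, $\mathcal{D}f\ge\mu(A)\gamma_{H}(\mu f)^{2}$ and $\mathcal{D}f\ge\frac{\mu(A)}{1+\mu(A)}\gamma_{H}\mu(f^{2})$; the single inequality you prove, $\mathcal{D}f\ge\mu(A)\gamma_{H}\mu(f^{2})$, implies both (the first via $\mu(f^{2})\ge(\mu f)^{2}$), with a better constant in the second.
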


\begin{enumerate}
\item $\mathcal{D}f\ge\mu\left(A\right)\gamma_{H}\,\left(\mu f\right)^{2}$,
\item $\mathcal{D}f\ge\frac{\mu\left(A\right)}{1+\mu\left(A\right)}\gamma_{H}\,\mu\left(f^{2}\right)$
\end{enumerate}
\begin{proof}
First, note that $\mu_{H}\left(A\right)\le\mu_{H}\left(f=0\right)\le\mu_{H}\left(\left|f-\mu_{H}f\right|\ge\mu_{H}f\right)$.
Therefore, by Chebyshev inequality and the fact that $\mu\left(A\right)=\mu_{H}\left(A\right)$,
\begin{equation}
\mu\left(A\right)\le\frac{\text{Var}_{H}f}{\left(\mu_{H}f\right)^{2}}.\label{eq:boundingvarianceoverexpectaitionsquared}
\end{equation}
Then, \claimref{restrictingdirichlet} implies
\begin{align*}
\mathcal{D}f & \ge\mu_{H^{c}}\mathcal{D}_{H}f\ge\gamma_{H}\mu_{H^{c}}\text{Var}_{H}f\ge\mu\left(A\right)\gamma_{H}\,\mu_{H^{c}}\left(\mu_{H}f\right)^{2}\ge\mu\left(A\right)\gamma_{H}\,\left(\mu f\right)^{2}
\end{align*}
by Jensen inequality. For the second part, we use inequality \ref{eq:boundingvarianceoverexpectaitionsquared}
\[
\text{Var}_{H}f\ge\mu\left(A\right)\left(\mu_{H}\left(f^{2}\right)-\text{Var}_{H}f\right),
\]
which implies 
\[
\text{Var}_{H}f\ge\frac{\mu\left(A\right)}{1+\mu\left(A\right)}\mu_{H}\left(f^{2}\right).
\]
The result then follows by applying \claimref{restrictingdirichlet}.
\end{proof}

\section{Proof of the results}

\subsection{Mixed threshold bootstrap percolation on $\protect\zz^{2}$}

\subsubsection{Proof of \eqref{fa12d2bpupperbound}}

For the upper bound we will find a specific mechanism in which a cluster
of empty sites could grow until it reaches the origin.
\begin{defn}
\label{def:goodsquareforfa12}A square (that is, a subset of $\zz^{2}$
of the form $x+\left[L\right]^{2}$) is \textit{good} if it contains
at least one easy site in each line and in each column.
\end{defn}

\begin{claim}
\label{claim:squaresaregood}Fix $L$. The probability that a square
of side $L$ is good is at least $1-2Le^{-\pi L}$.
\end{claim}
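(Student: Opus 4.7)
The plan is a straightforward union bound over the $2L$ lines and columns of the square. Since $\omega$ is an i.i.d. product measure with $\nu(\omega_x = 1) = \pi$, the events ``row $i$ contains no easy site'' and ``column $j$ contains no easy site'' each have probability $(1-\pi)^L$, because a line inside the square consists of $L$ independent sites.

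First I would bound a single line: the probability that all $L$ sites on a fixed row (or column) of the square are difficult is $(1-\pi)^L$, and by the elementary inequality $1-\pi \le e^{-\pi}$ this is at most $e^{-\pi L}$. Then I would take a union bound over the $L$ rows and $L$ columns, getting that the probability that some row or some column fails to contain an easy site is at most $2L e^{-\pi L}$. Complementing gives the stated lower bound $1 - 2L e^{-\pi L}$ on the probability that the square is good.

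There is no real obstacle here; the only thing to be slightly careful about is that ``good'' is defined via both rows and columns (hence the factor $2L$ rather than $L$), and that the definition depends only on the restriction of $\omega$ to the square, so translation of the square is immaterial by the product structure of $\nu$.
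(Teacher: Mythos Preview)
Your proof is correct and is essentially the same as the paper's: both bound the probability that a single line has no easy site by $(1-\pi)^L \le e^{-\pi L}$ and then apply a union bound over the $2L$ rows and columns. The paper phrases the row part as $\left[1-(1-\pi)^L\right]^L \ge 1 - Le^{-\pi L}$ before combining with the column bound, but this is just the same union bound written slightly differently.
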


\begin{proof}
\begin{align*}
\pp\left[\text{easy site in each line}\right] & =\left[1-\left(1-\pi\right)^{L}\right]^{L}\ge1-Le^{-\pi L}.
\end{align*}
The same bound holds for $\pp\left[\text{easy site in each column}\right]$,
and then we conclude by the union bound.
\end{proof}
\begin{defn}
The square $\left[L\right]^{2}$ is \textit{excellent} if for every
$2\le i\le L$ at least one of the sites in $\left\{ i\right\} \times\left[i-1\right]$
is easy, and at least one of the sites in $\left[i-1\right]\times\left\{ i\right\} $
is easy. For other squares of side $L$ being excellent is defined
by translation.
\end{defn}

We will use $p_{L}$ to denote the probability that a square of side
$L$ is excellent. Note that $p_{L}$ depends only on $\pi$ and not
on $q$.

\begin{figure}
\begin{tikzpicture}[scale=0.3, every node/.style={scale=0.5}]
	\def\xx{0}
	\def\yy{0}
	
	\draw[step=5, black, very thin,xshift=\xx cm, yshift=\yy cm] (0,0) grid +(5,5);
	\draw[step=5, black, very thin,xshift=\xx cm, yshift=\yy cm] (0,5) grid +(5,5);
	\draw[step=5, black, very thin,xshift=\xx cm, yshift=\yy cm] (5,5) grid +(5,5);
	
	\draw[step=1, gray, very thin,xshift=\xx cm, yshift=\yy cm] (0,0) grid +(5,5);
	\draw[step=1, gray, very thin,xshift=\xx cm, yshift=\yy cm] (0,5) grid +(5,5);
	\draw[step=1, gray, very thin,xshift=\xx cm, yshift=\yy cm] (5,5) grid +(5,5);
	
	\draw (\xx+0.5,\yy+0.5) node[black] {$0$};
	
	\draw (\xx+0.5,\yy+1.5) node[black] {$e$};
	\draw (\xx+1.5,\yy+0.5) node[black] {$e$};
	\draw (\xx+2.5,\yy+1.5) node[black] {$e$};
	\draw (\xx+1.5,\yy+2.5) node[black] {$e$};
	\draw (\xx+2.5,\yy+3.5) node[black] {$e$};
	\draw (\xx+3.5,\yy+0.5) node[black] {$e$};
	\draw (\xx+1.5,\yy+4.5) node[black] {$e$};
	\draw (\xx+4.5,\yy+2.5) node[black] {$e$};
	
	\draw (\xx+0.5,\yy+9.5) node[black] {$e$};
	\draw (\xx+1.5,\yy+7.5) node[black] {$e$};
	\draw (\xx+2.5,\yy+8.5) node[black] {$e$};
	\draw (\xx+3.5,\yy+5.5) node[black] {$e$};
	\draw (\xx+4.5,\yy+6.5) node[black] {$e$};
	
	\draw (\xx+5.5,\yy+7.5) node[black] {$e$};
	\draw (\xx+6.5,\yy+5.5) node[black] {$e$};
	\draw (\xx+7.5,\yy+6.5) node[black] {$e$};
	\draw (\xx+8.5,\yy+8.5) node[black] {$e$};
	\draw (\xx+9.5,\yy+9.5) node[black] {$e$};

	\draw[->]  (11,5) to (13,5);

	\def\xx{14}
	\def\yy{0}
	
	\draw[step=5, black, very thin,xshift=\xx cm, yshift=\yy cm] (0,0) grid +(5,5);
	\draw[step=5, black, very thin,xshift=\xx cm, yshift=\yy cm] (0,5) grid +(5,5);
	\draw[step=5, black, very thin,xshift=\xx cm, yshift=\yy cm] (5,5) grid +(5,5);
	
	\draw[step=1, gray, very thin,xshift=\xx cm, yshift=\yy cm] (0,0) grid +(5,5);
	\draw[step=1, gray, very thin,xshift=\xx cm, yshift=\yy cm] (0,5) grid +(5,5);
	\draw[step=1, gray, very thin,xshift=\xx cm, yshift=\yy cm] (5,5) grid +(5,5);
	
	\draw (\xx+0.5,\yy+0.5) node[black] {$0$};
	
	\draw (\xx+0.5,\yy+1.5) node[black] {$0$};
	\draw (\xx+1.5,\yy+0.5) node[black] {$0$};
	\draw (\xx+2.5,\yy+1.5) node[black] {$e$};
	\draw (\xx+1.5,\yy+2.5) node[black] {$e$};
	\draw (\xx+2.5,\yy+3.5) node[black] {$e$};
	\draw (\xx+3.5,\yy+0.5) node[black] {$e$};
	\draw (\xx+1.5,\yy+4.5) node[black] {$e$};
	\draw (\xx+4.5,\yy+2.5) node[black] {$e$};
	
	\draw (\xx+0.5,\yy+9.5) node[black] {$e$};
	\draw (\xx+1.5,\yy+7.5) node[black] {$e$};
	\draw (\xx+2.5,\yy+8.5) node[black] {$e$};
	\draw (\xx+3.5,\yy+5.5) node[black] {$e$};
	\draw (\xx+4.5,\yy+6.5) node[black] {$e$};
	
	\draw (\xx+5.5,\yy+7.5) node[black] {$e$};
	\draw (\xx+6.5,\yy+5.5) node[black] {$e$};
	\draw (\xx+7.5,\yy+6.5) node[black] {$e$};
	\draw (\xx+8.5,\yy+8.5) node[black] {$e$};
	\draw (\xx+9.5,\yy+9.5) node[black] {$e$};

	\draw[->]  (25,5) to (27,5);

	\def\xx{28}
	\def\yy{0}

	\draw[step=5, black, very thin,xshift=\xx cm, yshift=\yy cm] (0,0) grid +(5,5);
	\draw[step=5, black, very thin,xshift=\xx cm, yshift=\yy cm] (0,5) grid +(5,5);
	\draw[step=5, black, very thin,xshift=\xx cm, yshift=\yy cm] (5,5) grid +(5,5);
	
	\draw[step=1, gray, very thin,xshift=\xx cm, yshift=\yy cm] (0,0) grid +(5,5);
	\draw[step=1, gray, very thin,xshift=\xx cm, yshift=\yy cm] (0,5) grid +(5,5);
	\draw[step=1, gray, very thin,xshift=\xx cm, yshift=\yy cm] (5,5) grid +(5,5);
	
	\draw (\xx+0.5,\yy+0.5) node[black] {$0$};
	
	\draw (\xx+0.5,\yy+1.5) node[black] {$0$};
	\draw (\xx+1.5,\yy+0.5) node[black] {$0$};
	\draw (\xx+1.5,\yy+1.5) node[black] {$0$};
	\draw (\xx+2.5,\yy+1.5) node[black] {$e$};
	\draw (\xx+1.5,\yy+2.5) node[black] {$e$};
	\draw (\xx+2.5,\yy+3.5) node[black] {$e$};
	\draw (\xx+3.5,\yy+0.5) node[black] {$e$};
	\draw (\xx+1.5,\yy+4.5) node[black] {$e$};
	\draw (\xx+4.5,\yy+2.5) node[black] {$e$};
	
	\draw (\xx+0.5,\yy+9.5) node[black] {$e$};
	\draw (\xx+1.5,\yy+7.5) node[black] {$e$};
	\draw (\xx+2.5,\yy+8.5) node[black] {$e$};
	\draw (\xx+3.5,\yy+5.5) node[black] {$e$};
	\draw (\xx+4.5,\yy+6.5) node[black] {$e$};
	
	\draw (\xx+5.5,\yy+7.5) node[black] {$e$};
	\draw (\xx+6.5,\yy+5.5) node[black] {$e$};
	\draw (\xx+7.5,\yy+6.5) node[black] {$e$};
	\draw (\xx+8.5,\yy+8.5) node[black] {$e$};
	\draw (\xx+9.5,\yy+9.5) node[black] {$e$};

	\draw[->]  (39,5) to (41,5);

	\def\xx{42}
	\def\yy{0}

	\draw[step=5, black, very thin,xshift=\xx cm, yshift=\yy cm] (0,0) grid +(5,5);
	\draw[step=5, black, very thin,xshift=\xx cm, yshift=\yy cm] (0,5) grid +(5,5);
	\draw[step=5, black, very thin,xshift=\xx cm, yshift=\yy cm] (5,5) grid +(5,5);
	
	\draw[step=1, gray, very thin,xshift=\xx cm, yshift=\yy cm] (0,0) grid +(5,5);
	\draw[step=1, gray, very thin,xshift=\xx cm, yshift=\yy cm] (0,5) grid +(5,5);
	\draw[step=1, gray, very thin,xshift=\xx cm, yshift=\yy cm] (5,5) grid +(5,5);
	
	\draw (\xx+0.5,\yy+0.5) node[black] {$0$};
	
	\draw (\xx+0.5,\yy+1.5) node[black] {$0$};
	\draw (\xx+1.5,\yy+0.5) node[black] {$0$};
	\draw (\xx+1.5,\yy+1.5) node[black] {$0$};
	\draw (\xx+2.5,\yy+1.5) node[black] {$0$};
	\draw (\xx+1.5,\yy+2.5) node[black] {$0$};
	\draw (\xx+2.5,\yy+3.5) node[black] {$e$};
	\draw (\xx+3.5,\yy+0.5) node[black] {$e$};
	\draw (\xx+1.5,\yy+4.5) node[black] {$e$};
	\draw (\xx+4.5,\yy+2.5) node[black] {$e$};
	
	\draw (\xx+0.5,\yy+9.5) node[black] {$e$};
	\draw (\xx+1.5,\yy+7.5) node[black] {$e$};
	\draw (\xx+2.5,\yy+8.5) node[black] {$e$};
	\draw (\xx+3.5,\yy+5.5) node[black] {$e$};
	\draw (\xx+4.5,\yy+6.5) node[black] {$e$};
	
	\draw (\xx+5.5,\yy+7.5) node[black] {$e$};
	\draw (\xx+6.5,\yy+5.5) node[black] {$e$};
	\draw (\xx+7.5,\yy+6.5) node[black] {$e$};
	\draw (\xx+8.5,\yy+8.5) node[black] {$e$};
	\draw (\xx+9.5,\yy+9.5) node[black] {$e$};

	\draw[->]  (53,5) to (55,5);

	\def\xx{0}
	\def\yy{-13}

	\draw[step=5, black, very thin,xshift=\xx cm, yshift=\yy cm] (0,0) grid +(5,5);
	\draw[step=5, black, very thin,xshift=\xx cm, yshift=\yy cm] (0,5) grid +(5,5);
	\draw[step=5, black, very thin,xshift=\xx cm, yshift=\yy cm] (5,5) grid +(5,5);
	
	\draw[step=1, gray, very thin,xshift=\xx cm, yshift=\yy cm] (0,0) grid +(5,5);
	\draw[step=1, gray, very thin,xshift=\xx cm, yshift=\yy cm] (0,5) grid +(5,5);
	\draw[step=1, gray, very thin,xshift=\xx cm, yshift=\yy cm] (5,5) grid +(5,5);

	\foreach \x in {0,...,2}{
		\foreach \y in {0,...,2}{
			\draw (\xx+\x+0.5,\yy+\y+0.5) node[black] {$0$};
		}
	}	

	\draw (\xx+2.5,\yy+3.5) node[black] {$e$};
	\draw (\xx+3.5,\yy+0.5) node[black] {$e$};
	\draw (\xx+1.5,\yy+4.5) node[black] {$e$};
	\draw (\xx+4.5,\yy+2.5) node[black] {$e$};
	
	\draw (\xx+0.5,\yy+9.5) node[black] {$e$};
	\draw (\xx+1.5,\yy+7.5) node[black] {$e$};
	\draw (\xx+2.5,\yy+8.5) node[black] {$e$};
	\draw (\xx+3.5,\yy+5.5) node[black] {$e$};
	\draw (\xx+4.5,\yy+6.5) node[black] {$e$};
	
	\draw (\xx+5.5,\yy+7.5) node[black] {$e$};
	\draw (\xx+6.5,\yy+5.5) node[black] {$e$};
	\draw (\xx+7.5,\yy+6.5) node[black] {$e$};
	\draw (\xx+8.5,\yy+8.5) node[black] {$e$};
	\draw (\xx+9.5,\yy+9.5) node[black] {$e$};

	\draw[->]  (11,-8) to (13,-8);

	\def\xx{14}
	\def\yy{-13}

	\draw[step=5, black, very thin,xshift=\xx cm, yshift=\yy cm] (0,0) grid +(5,5);
	\draw[step=5, black, very thin,xshift=\xx cm, yshift=\yy cm] (0,5) grid +(5,5);
	\draw[step=5, black, very thin,xshift=\xx cm, yshift=\yy cm] (5,5) grid +(5,5);
	
	\draw[step=1, gray, very thin,xshift=\xx cm, yshift=\yy cm] (0,0) grid +(5,5);
	\draw[step=1, gray, very thin,xshift=\xx cm, yshift=\yy cm] (0,5) grid +(5,5);
	\draw[step=1, gray, very thin,xshift=\xx cm, yshift=\yy cm] (5,5) grid +(5,5);

	\foreach \x in {0,...,3}{
		\foreach \y in {0,...,3}{
			\draw (\xx+\x+0.5,\yy+\y+0.5) node[black] {$0$};
		}
	}	

	\draw (\xx+1.5,\yy+4.5) node[black] {$e$};
	\draw (\xx+4.5,\yy+2.5) node[black] {$e$};
	
	\draw (\xx+0.5,\yy+9.5) node[black] {$e$};
	\draw (\xx+1.5,\yy+7.5) node[black] {$e$};
	\draw (\xx+2.5,\yy+8.5) node[black] {$e$};
	\draw (\xx+3.5,\yy+5.5) node[black] {$e$};
	\draw (\xx+4.5,\yy+6.5) node[black] {$e$};
	
	\draw (\xx+5.5,\yy+7.5) node[black] {$e$};
	\draw (\xx+6.5,\yy+5.5) node[black] {$e$};
	\draw (\xx+7.5,\yy+6.5) node[black] {$e$};
	\draw (\xx+8.5,\yy+8.5) node[black] {$e$};
	\draw (\xx+9.5,\yy+9.5) node[black] {$e$};

	\draw[->]  (25,-8) to (27,-8);

	\def\xx{28}
	\def\yy{-13}
	
	\draw[step=5, black, very thin,xshift=\xx cm, yshift=\yy cm] (0,0) grid +(5,5);
	\draw[step=5, black, very thin,xshift=\xx cm, yshift=\yy cm] (0,5) grid +(5,5);
	\draw[step=5, black, very thin,xshift=\xx cm, yshift=\yy cm] (5,5) grid +(5,5);
	
	\draw[step=1, gray, very thin,xshift=\xx cm, yshift=\yy cm] (0,0) grid +(5,5);
	\draw[step=1, gray, very thin,xshift=\xx cm, yshift=\yy cm] (0,5) grid +(5,5);
	\draw[step=1, gray, very thin,xshift=\xx cm, yshift=\yy cm] (5,5) grid +(5,5);
	
	\foreach \x in {0,...,4}{
		\foreach \y in {0,...,4}{
			\draw (\xx+\x+0.5,\yy+\y+0.5) node[black] {$0$};
		}
	}	
	
	\draw (\xx+0.5,\yy+9.5) node[black] {$e$};
	\draw (\xx+1.5,\yy+7.5) node[black] {$e$};
	\draw (\xx+2.5,\yy+8.5) node[black] {$e$};
	\draw (\xx+3.5,\yy+5.5) node[black] {$e$};
	\draw (\xx+4.5,\yy+6.5) node[black] {$e$};
	
	\draw (\xx+5.5,\yy+7.5) node[black] {$e$};
	\draw (\xx+6.5,\yy+5.5) node[black] {$e$};
	\draw (\xx+7.5,\yy+6.5) node[black] {$e$};
	\draw (\xx+8.5,\yy+8.5) node[black] {$e$};
	\draw (\xx+9.5,\yy+9.5) node[black] {$e$};
	
	\draw[->]  (39,-8) to (41,-8);

	\def\xx{42}
	\def\yy{-13}

	\draw[step=5, black, very thin,xshift=\xx cm, yshift=\yy cm] (0,0) grid +(5,5);
	\draw[step=5, black, very thin,xshift=\xx cm, yshift=\yy cm] (0,5) grid +(5,5);
	\draw[step=5, black, very thin,xshift=\xx cm, yshift=\yy cm] (5,5) grid +(5,5);
	
	\draw[step=1, gray, very thin,xshift=\xx cm, yshift=\yy cm] (0,0) grid +(5,5);
	\draw[step=1, gray, very thin,xshift=\xx cm, yshift=\yy cm] (0,5) grid +(5,5);
	\draw[step=1, gray, very thin,xshift=\xx cm, yshift=\yy cm] (5,5) grid +(5,5);

	\foreach \x in {0,...,4}{
		\foreach \y in {0,...,4}{
			\draw (\xx+\x+0.5,\yy+\y+0.5) node[black] {$0$};
		}
	}

	\draw (\xx+0.5,\yy+9.5) node[black] {$e$};
	\draw (\xx+1.5,\yy+7.5) node[black] {$e$};
	\draw (\xx+2.5,\yy+8.5) node[black] {$e$};
	\draw (\xx+3.5,\yy+5.5) node[black] {$0$};
	\draw (\xx+4.5,\yy+6.5) node[black] {$e$};
	
	\draw (\xx+5.5,\yy+7.5) node[black] {$e$};
	\draw (\xx+6.5,\yy+5.5) node[black] {$e$};
	\draw (\xx+7.5,\yy+6.5) node[black] {$e$};
	\draw (\xx+8.5,\yy+8.5) node[black] {$e$};
	\draw (\xx+9.5,\yy+9.5) node[black] {$e$};

	\draw[->]  (53,-8) to (55,-8);

	\def\xx{0}
	\def\yy{-26}

	\draw[step=5, black, very thin,xshift=\xx cm, yshift=\yy cm] (0,0) grid +(5,5);
	\draw[step=5, black, very thin,xshift=\xx cm, yshift=\yy cm] (0,5) grid +(5,5);
	\draw[step=5, black, very thin,xshift=\xx cm, yshift=\yy cm] (5,5) grid +(5,5);
	
	\draw[step=1, gray, very thin,xshift=\xx cm, yshift=\yy cm] (0,0) grid +(5,5);
	\draw[step=1, gray, very thin,xshift=\xx cm, yshift=\yy cm] (0,5) grid +(5,5);
	\draw[step=1, gray, very thin,xshift=\xx cm, yshift=\yy cm] (5,5) grid +(5,5);

	\foreach \x in {0,...,4}{
		\foreach \y in {0,...,4}{
			\draw (\xx+\x+0.5,\yy+\y+0.5) node[black] {$0$};
		}
	}
	
	\foreach \x in {0,...,4}{
		\foreach \y in {5}{
			\draw (\xx+\x+0.5,\yy+\y+0.5) node[black] {$0$};
		}
	}

	\draw (\xx+0.5,\yy+9.5) node[black] {$e$};
	\draw (\xx+1.5,\yy+7.5) node[black] {$e$};
	\draw (\xx+2.5,\yy+8.5) node[black] {$e$};
	\draw (\xx+4.5,\yy+6.5) node[black] {$e$};
	
	\draw (\xx+5.5,\yy+7.5) node[black] {$e$};
	\draw (\xx+6.5,\yy+5.5) node[black] {$e$};
	\draw (\xx+7.5,\yy+6.5) node[black] {$e$};
	\draw (\xx+8.5,\yy+8.5) node[black] {$e$};
	\draw (\xx+9.5,\yy+9.5) node[black] {$e$};

	\draw[->]  (11,-21) to (13,-21);

	\def\xx{14}
	\def\yy{-26}

	\draw[step=5, black, very thin,xshift=\xx cm, yshift=\yy cm] (0,0) grid +(5,5);
	\draw[step=5, black, very thin,xshift=\xx cm, yshift=\yy cm] (0,5) grid +(5,5);
	\draw[step=5, black, very thin,xshift=\xx cm, yshift=\yy cm] (5,5) grid +(5,5);
	
	\draw[step=1, gray, very thin,xshift=\xx cm, yshift=\yy cm] (0,0) grid +(5,5);
	\draw[step=1, gray, very thin,xshift=\xx cm, yshift=\yy cm] (0,5) grid +(5,5);
	\draw[step=1, gray, very thin,xshift=\xx cm, yshift=\yy cm] (5,5) grid +(5,5);

	\foreach \x in {0,...,4}{
		\foreach \y in {0,...,4}{
			\draw (\xx+\x+0.5,\yy+\y+0.5) node[black] {$0$};
		}
	}
	
	\foreach \x in {0,...,4}{
		\foreach \y in {5,...,6}{
			\draw (\xx+\x+0.5,\yy+\y+0.5) node[black] {$0$};
		}
	}		

	\draw (\xx+0.5,\yy+9.5) node[black] {$e$};
	\draw (\xx+1.5,\yy+7.5) node[black] {$e$};
	\draw (\xx+2.5,\yy+8.5) node[black] {$e$};
	
	\draw (\xx+5.5,\yy+7.5) node[black] {$e$};
	\draw (\xx+6.5,\yy+5.5) node[black] {$e$};
	\draw (\xx+7.5,\yy+6.5) node[black] {$e$};
	\draw (\xx+8.5,\yy+8.5) node[black] {$e$};
	\draw (\xx+9.5,\yy+9.5) node[black] {$e$};

	\draw[->]  (25,-21) to (27,-21);

	\def\xx{28}
	\def\yy{-26}

	\draw[step=5, black, very thin,xshift=\xx cm, yshift=\yy cm] (0,0) grid +(5,5);
	\draw[step=5, black, very thin,xshift=\xx cm, yshift=\yy cm] (0,5) grid +(5,5);
	\draw[step=5, black, very thin,xshift=\xx cm, yshift=\yy cm] (5,5) grid +(5,5);
	
	\draw[step=1, gray, very thin,xshift=\xx cm, yshift=\yy cm] (0,0) grid +(5,5);
	\draw[step=1, gray, very thin,xshift=\xx cm, yshift=\yy cm] (0,5) grid +(5,5);
	\draw[step=1, gray, very thin,xshift=\xx cm, yshift=\yy cm] (5,5) grid +(5,5);

	\foreach \x in {0,...,4}{
		\foreach \y in {0,...,4}{
			\draw (\xx+\x+0.5,\yy+\y+0.5) node[black] {$0$};
		}
	}
	
	\foreach \x in {0,...,4}{
		\foreach \y in {5,...,9}{
			\draw (\xx+\x+0.5,\yy+\y+0.5) node[black] {$0$};
		}
	}		
	
	\draw (\xx+5.5,\yy+7.5) node[black] {$e$};
	\draw (\xx+6.5,\yy+5.5) node[black] {$e$};
	\draw (\xx+7.5,\yy+6.5) node[black] {$e$};
	\draw (\xx+8.5,\yy+8.5) node[black] {$e$};
	\draw (\xx+9.5,\yy+9.5) node[black] {$e$};

	\draw[->]  (39,-21) to (41,-21);

	\def\xx{42}
	\def\yy{-26}

	\draw[step=5, black, very thin,xshift=\xx cm, yshift=\yy cm] (0,0) grid +(5,5);
	\draw[step=5, black, very thin,xshift=\xx cm, yshift=\yy cm] (0,5) grid +(5,5);
	\draw[step=5, black, very thin,xshift=\xx cm, yshift=\yy cm] (5,5) grid +(5,5);
	
	\draw[step=1, gray, very thin,xshift=\xx cm, yshift=\yy cm] (0,0) grid +(5,5);
	\draw[step=1, gray, very thin,xshift=\xx cm, yshift=\yy cm] (0,5) grid +(5,5);
	\draw[step=1, gray, very thin,xshift=\xx cm, yshift=\yy cm] (5,5) grid +(5,5);

	\foreach \x in {0,...,4}{
		\foreach \y in {0,...,4}{
			\draw (\xx+\x+0.5,\yy+\y+0.5) node[black] {$0$};
		}
	}
	
	\foreach \x in {0,...,4}{
		\foreach \y in {5,...,9}{
			\draw (\xx+\x+0.5,\yy+\y+0.5) node[black] {$0$};
		}
	}		
	
	\foreach \x in {5,...,9}{
		\foreach \y in {5,...,9}{
			\draw (\xx+\x+0.5,\yy+\y+0.5) node[black] {$0$};
		}
	}
	
\end{tikzpicture}

\caption{\label{fig:bp12}First steps of propagating the empty cluster. $0$
represents an empty site, otherwise the state is the initial one.
$e$ stands for an easy site.}
\end{figure}
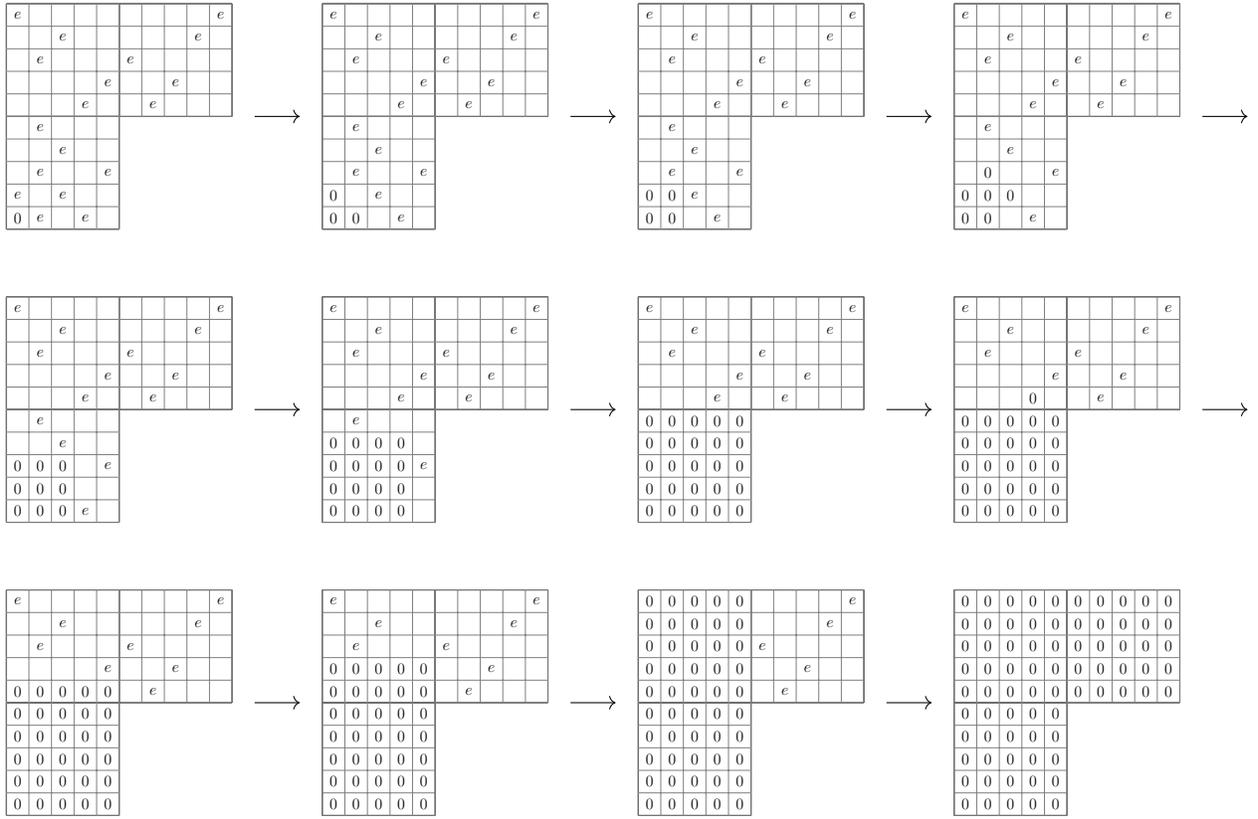
The next two claims will show how a cluster of empty sites could propagate.
See \figref{bp12}.
\begin{claim}
\label{claim:infectingexcellentsquares}Assume that $\left[L\right]^{2}$
is excellent, and that $\left(1,1\right)$ is initially empty. Then
$\left[L\right]^{2}$ will be entirely emptied by time $L^{2}$.
\end{claim}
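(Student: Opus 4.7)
The plan is to prove by induction on $i$ that the sub-square $[i]^2$ is entirely empty by time $T_i := i(i+1)/2 - 1$; since $T_L < L^2$, this gives the claim. The base case $i=1$ is immediate because $(1,1)$ is empty at time $0 = T_1$.

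For the inductive step, assume $[i-1]^2$ is empty by time $T_{i-1}$. By the excellent property, pick an easy site $(i,j_{\ast}) \in \{i\}\times[i-1]$ and an easy site $(j'_{\ast},i) \in [i-1]\times\{i\}$; these serve as seeds. The left neighbor $(i-1,j_{\ast})$ of $(i,j_{\ast})$ lies in $[i-1]^2$ and is therefore already empty, and since the threshold $\omega_{(i,j_{\ast})}$ equals $1$, the site $(i,j_{\ast})$ is emptied at time $T_{i-1}+1$; the same reasoning applies to $(j'_{\ast},i)$. From here I would run a secondary induction on $k$ to show that at time $T_{i-1}+1+k$ the sites $(i,j_{\ast}\pm k)$ (whenever they lie in $\{i\}\times[i-1]$) are empty. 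The key observation is that each such site has two readily available empty neighbors: its horizontal neighbor $(i-1,j_{\ast}\pm k)$, which sits in $[i-1]^2$ and is thus empty by the outer hypothesis, together with its vertical neighbor $(i,j_{\ast}\pm(k-1))$ emptied at the previous step; two empty neighbors suffice even for a difficult site. The same domino empties the row $[i-1]\times\{i\}$, so the full L-shape is empty by time $T_{i-1}+i-1$ at the latest.

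It remains to empty the corner $(i,i)$, whose neighbors $(i-1,i)\in [i-1]\times\{i\}$ and $(i,i-1)\in\{i\}\times[i-1]$ are both empty by the previous paragraph; hence $(i,i)$ has two empty neighbors and is emptied at time $T_{i-1}+i$, giving $T_i \le T_{i-1}+i$. Telescoping yields $T_L \le L(L+1)/2 - 1 < L^2$, which is the desired bound.

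I do not anticipate a genuine obstacle. The mildly delicate points I would double-check in the write-up are the boundary cases $j_{\ast}\in\{1,i-1\}$ and $j'_{\ast}\in\{1,i-1\}$, where one side of the domino is empty from the start and only shortens the propagation; and the fact that the outer inductive hypothesis keeps supplying the needed horizontal/vertical empty neighbor throughout the propagation, so that the constraint \eqref{eq:faconstraint} is satisfied at every invoked step regardless of the local threshold.
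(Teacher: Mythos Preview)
Your proposal is correct and follows essentially the same approach as the paper: induction on the side length of the emptied sub-square, seeding the new column and row from the easy sites guaranteed by excellence, then a domino propagation using the already-empty $[i-1]^{2}$ as the second empty neighbor, and finally emptying the corner $(i,i)$. Your time accounting $T_i=T_{i-1}+i$ is in fact slightly sharper than the paper's cruder $L^{2}$ bound, but the argument is the same.
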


\begin{proof}
This could be done by induction on the size of the empty square \textendash{}
assume that $\left[l\right]^{2}$ is entirely emptied for some $l\le L$.
By the definition of an excellent square, there is an easy site $x\in\left\{ l+1\right\} \times\left[l\right]$.
Its neighbor to the left is empty (since it is in $\left[l\right]^{2}$),
so at the next time step this site will also be empty. Once $x$ is
empty, the two sites $x\pm e_{2}$ could be emptied, and then the
sites $x\pm2e_{2}$ and so on, as long as they stay in $\left\{ l+1\right\} \times\left[l\right]$.
Thus, at time $l$ all sites in $\left\{ l+1\right\} \times\left[l\right]$
will be empty, and by the same reasoning the sites of $\left[l\right]\times\left\{ l+1\right\} $
will also be empty. Since $\left(l+1,l+1\right)$ has two empty neighbors
it will be emptied at step $l+1$, and thus $\left[l+1\right]^{2}$
will be emptied.
\end{proof}
\begin{claim}
\label{claim:infectionpropagatesforfa2}Assume that $\left[L\right]^{2}$
is good, and that it has a neighboring square that is entirely empty
by time $T$. Then $\left[L\right]^{2}$ will be entirely empty by
time $T+L^{2}$.
\end{claim}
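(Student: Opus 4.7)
My plan is to mimic the column-by-column propagation already used in \claimref{infectingexcellentsquares}, but now on the whole square rather than inside a staircase, exploiting the fact that in each column of a good square there is guaranteed to be an easy site. By symmetry I may assume the empty neighbouring square abuts $\left[L\right]^{2}$ on the left, so that at time $T$ every site in $\left\{ 0\right\} \times\left[L\right]$ is empty; the three other orientations are identical up to reflection, using the ``easy site per row'' clause of goodness rather than the ``easy site per column'' one whenever the seed square sits above or below $\left[L\right]^{2}$.

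I would then prove by induction on $j\in\left\{ 1,\dots,L\right\} $ that the column $\left\{ j\right\} \times\left[L\right]$ is entirely empty by time $T+jL$; the case $j=L$ is exactly the statement. For the inductive step, assume $\left\{ j-1\right\} \times\left[L\right]$ is empty by time $T+\left(j-1\right)L$. Goodness furnishes at least one easy site $\left(j,i_{j}\right)$ in column $j$; since its left neighbour $\left(j-1,i_{j}\right)$ is empty by the outer induction and its threshold is $1$, this site becomes empty at time $T+\left(j-1\right)L+1$.

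A short inner induction on $k\ge 1$ then shows that, as long as $i_{j}\pm k\in\left[L\right]$, the site $\left(j,i_{j}\pm k\right)$ is empty by time $T+\left(j-1\right)L+1+k$: its left neighbour is empty from the outer induction, and its vertical neighbour $\left(j,i_{j}\pm(k-1)\right)$ is empty from the previous inner step, so it has at least two empty neighbours and can be emptied whether it is easy or difficult. Since $|i-i_{j}|\le L-1$ for every $i\in\left[L\right]$, column $\left\{ j\right\} \times\left[L\right]$ is fully empty by time $T+\left(j-1\right)L+L=T+jL$, closing the outer induction.

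I do not foresee a real obstacle. The only delicate point is to check that the difficult sites of column $j$ still meet their threshold of two: by the time we reach them they have a horizontal empty neighbour (supplied by the previously emptied column $j-1$) and a vertical empty neighbour (supplied by the cascade along column $j$ itself), which is exactly what is needed. The budget $L^{2}=L\cdot L$ is comfortable and absorbs any inefficiency coming from the position of the easy site $i_{j}$ inside the column.
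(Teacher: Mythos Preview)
Your argument is correct and is precisely the approach the paper takes: empty $\left[L\right]^{2}$ column by column (or row by row, depending on orientation), starting each column at its guaranteed easy site and then propagating vertically using the already-empty previous column as the second empty neighbour for difficult sites. The paper states this in two sentences; you have simply written out the two nested inductions explicitly.
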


\begin{proof}
We can empty $\left[L\right]^{2}$ line by line (or column by column,
depending on whether its empty neighbor is in the horizontal direction
or the vertical one). For each line, we start by emptying the easy
site that it contains, and then continue to propagate.
\end{proof}
\begin{defn}
Until the end of the proof of the upper bound, $L$ will be the minimal
length for which the probability to be good exceeds $p^{\text{SP}}+0.01$.
\end{defn}

\begin{defn}
$\mathcal{C}$ will denote the infinite cluster of good boxes of the
form $Li+\left[L\right]^{2}$ for $i\in\zz^{2}$. $\mathcal{C}_{0}$
will denote the cluster of the origin surrounded by a path in $\mathcal{C}$,
or just the origin if it is in $\mathcal{C}$. $\partial\mathcal{C}_{0}$
will be the outer boundary of $\mathcal{C}_{0}$ (namely the boxes
out of $\mathcal{C}$ that have a neighbor in $\mathcal{C}_{0}$).
Note that $\mathcal{C}_{0}$ is finite and that $\partial\mathcal{C}_{0}$
is connected.
\end{defn}

\begin{claim}
\label{claim:infectionreachestheorigin}Assume that at time $T$ one
of the boxes on $\partial\mathcal{C}_{0}$ is entirely empty. Then
by time $T+T_{0}$ the origin will be empty, where $T_{0}=\left(\left|\partial\mathcal{C}_{0}\right|+\left|\mathcal{C}_{0}\right|\right)L^{2}$.
\end{claim}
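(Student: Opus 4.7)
The plan is to split the emptying into two phases: first propagate the empty region around the boundary $\partial\mathcal{C}_0$ (which is connected and consists entirely of good boxes), and then invade the interior $\mathcal{C}_0$ starting from the newly empty boundary.

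\textbf{Phase one.} Starting from the single empty box guaranteed at time $T$, I would apply \claimref{infectionpropagatesforfa2} iteratively along a spanning tree of $\partial\mathcal{C}_0$: since every box on $\partial\mathcal{C}_0\subseteq\mathcal{C}$ is good and $\partial\mathcal{C}_0$ is connected (as noted just before the statement), each new good box requires at most $L^2$ additional time steps. Hence all of $\partial\mathcal{C}_0$ is emptied by time $T+|\partial\mathcal{C}_0|L^2$.

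\textbf{Phase two.} Here \claimref{infectionpropagatesforfa2} no longer applies directly, because the boxes in $\mathcal{C}_0$ are not assumed to be good. I would replace it by the following universal observation: any box $B$ of side $L$ can be entirely emptied within $L^2$ further steps as soon as two neighboring boxes meeting at a common corner of $B$ are entirely empty. Indeed, the corresponding corner site of $B$ then has two empty outside neighbors, so it becomes empty regardless of its threshold, and the emptying propagates diagonally inward --- each site $(x,y)$ of $B$ measured from that corner has its neighbors $(x-1,y)$ and $(x,y-1)$ already empty by induction on $x+y$, so $B$ empties within $2L-1\le L^2$ steps. To guarantee this empty L-corner hypothesis at every step, I would process the boxes of $\mathcal{C}_0$ in topmost-leftmost order: the topmost-leftmost unprocessed box has both its top and its left neighbors either in $\partial\mathcal{C}_0$ (emptied in phase one) or already processed, and these two neighbors meet at its top-left corner. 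This yields a total cost of at most $|\mathcal{C}_0|L^2$ for phase two.

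The main subtlety is the absence of goodness in $\mathcal{C}_0$, which forces one to strengthen the one-empty-neighbor propagation used in \claimref{infectionpropagatesforfa2} to the two-adjacent-empty-neighbors version above, and to supply the required corner by an explicit ordering; the justification of the ordering reduces to the observation that any box adjacent to $\mathcal{C}_0$ but not in $\mathcal{C}_0$ must belong to $\partial\mathcal{C}_0$. Summing the two phase costs gives $T+(|\partial\mathcal{C}_0|+|\mathcal{C}_0|)L^2=T+T_0$, and since the origin lies in some box of $\mathcal{C}_0$, it is empty by that time.
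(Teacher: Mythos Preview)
Your proof is correct. Phase one is exactly the paper's argument. Phase two is where you and the paper diverge: the paper simply observes that once $\partial\mathcal{C}_0$ is entirely empty, the occupied sites remaining in $\mathcal{C}_0$ form a finite set whose every boundary neighbor is empty; since every threshold is at most $2$, such a set cannot be stable (its top-rightmost site has two empty neighbors), so at each subsequent step at least one more site of $\mathcal{C}_0$ is emptied, giving the $|\mathcal{C}_0|L^2$ bound in one line. You instead build an explicit sweep --- a ``two adjacent empty neighboring boxes $\Rightarrow$ empty the box diagonally in $2L-1$ steps'' lemma, combined with the topmost-leftmost ordering --- which is more work but equally valid. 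The paper's route is shorter and uses nothing beyond the instability of finite occupied sets under threshold $\le 2$; yours is more constructive and would still apply if one wanted an explicit sequence of flips rather than a mere time bound.
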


\begin{proof}
By \claimref{infectionpropagatesforfa2}, the boundary $\partial\mathcal{C}_{0}$
will be emptied by time $T_{0}+L^{2}\left|\partial\mathcal{C}_{0}\right|$.
Then, at each time step at least one site of $\mathcal{C}_{0}$ must
be emptied, since no finite region could stay occupied forever.
\end{proof}
\begin{claim}
Assume that a box $Li+\left[L\right]^{2}$ in $\mathcal{C}$ is empty
at time $T$. Also, assume that the graph distance in $\mathcal{C}$
between this box and $\partial\mathcal{C}_{0}$ is $l$. Then by time
$T+lL^{2}+T_{0}$ the origin will be empty.
\end{claim}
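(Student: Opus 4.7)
The plan is a straightforward induction along a path of good boxes in $\mathcal{C}$, chaining together the two propagation lemmas already established.

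First, since the given box $Li+[L]^2$ and some box in $\partial\mathcal{C}_0$ lie in the same connected component $\mathcal{C}$ at graph distance exactly $l$, I can fix a path of boxes $B_0, B_1, \dots, B_l$ in $\mathcal{C}$ with $B_0 = Li + [L]^2$ and $B_l$ a neighbor of some box in $\partial\mathcal{C}_0$ (in fact, $B_l \in \mathcal{C}$ adjacent to $\partial\mathcal{C}_0$, which by the definition of the boundary means $B_l$ has a neighbor in $\partial\mathcal{C}_0$). Every $B_j$ is good by definition of $\mathcal{C}$.

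Next, I would prove by induction on $j \in \{0, 1, \dots, l\}$ that $B_j$ is entirely empty by time $T + jL^2$. The base case $j=0$ is the hypothesis. For the inductive step, assuming $B_{j-1}$ is empty by time $T+(j-1)L^2$, I apply \claimref{infectionpropagatesforfa2} to the good box $B_j$ with its neighboring empty box $B_{j-1}$, which yields that $B_j$ is emptied by time $T + (j-1)L^2 + L^2 = T + jL^2$. In particular, at time $T + lL^2$ the box $B_l$ is entirely empty. Since $B_l$ is adjacent to a box in $\partial\mathcal{C}_0$, and that boundary box is good (boxes in $\partial\mathcal{C}_0 \subseteq \mathcal{C}$, or more precisely we only need it to be good; if the convention is $\partial \mathcal{C}_0$ lies outside $\mathcal{C}$ this step would instead follow by taking $B_l$ itself as an empty box neighboring a box of $\partial\mathcal{C}_0$), one more application of \claimref{infectionpropagatesforfa2} empties a box of $\partial\mathcal{C}_0$ by time $T + lL^2 + L^2$, which is absorbed into the $T_0$ budget since $T_0 = (|\partial\mathcal{C}_0| + |\mathcal{C}_0|) L^2 \ge L^2$.

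Finally, I invoke \claimref{infectionreachestheorigin} with starting time $T + lL^2$ (using that a box in $\partial\mathcal{C}_0$ is empty by that time, up to the single extra $L^2$ step accounted for in $T_0$): this guarantees that the origin is empty by time $T + lL^2 + T_0$, which is exactly the claim. There is no real obstacle here; the only subtlety is the bookkeeping between boxes of $\mathcal{C}$ and boxes of $\partial\mathcal{C}_0$, which just requires being careful that the last propagation step from $B_l$ into $\partial\mathcal{C}_0$ fits inside the $T_0$ allowance.
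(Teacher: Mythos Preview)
Your proposal is correct and follows exactly the approach the paper intends: the paper's own proof is the one-line remark ``This is again a direct application of claims \ref{claim:infectionpropagatesforfa2} and \ref{claim:infectionreachestheorigin},'' and you have simply unpacked that into the obvious induction along a length-$l$ path of good boxes followed by an invocation of \claimref{infectionreachestheorigin}. Your hedging about whether $\partial\mathcal{C}_0$ lies in $\mathcal{C}$ is understandable given the paper's slightly ambiguous wording, but in the intended reading $\partial\mathcal{C}_0\subseteq\mathcal{C}$, so $B_l$ is already a box of $\partial\mathcal{C}_0$ and no extra bookkeeping step is needed.
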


\begin{proof}
This is again a direct application of claims \ref{claim:infectionpropagatesforfa2}
and \ref{claim:infectionreachestheorigin}.
\end{proof}
Finally, we will use the following result from percolation theory:
\begin{claim}
For $l$ large enough, the number of boxes in $\mathcal{C}$ that
are at graph distance in $\mathcal{C}$ at most $l$ from $\partial\mathcal{C}_{0}$
is greater than $\theta l^{2}$, where $\theta$ depends only on the
probability that a box is good.
\end{claim}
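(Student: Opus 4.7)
The plan is to read this as a classical statement about the intrinsic geometry of the supercritical infinite cluster. By the choice of $L$, the probability that a box $Li+\left[L\right]^{2}$ is good exceeds $p^{\text{SP}}+0.01$, and by \claimref{squaresaregood} these events are independent in different boxes. Hence the good boxes form a supercritical independent site percolation on $\zz^{2}$, the infinite cluster $\mathcal{C}$ exists $\nu$-almost surely, and its density $\theta_{\infty}=\theta_{\infty}(\pi)$ is strictly positive.

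The key ingredient I would invoke is the Antal--Pisztora shape theorem for supercritical Bernoulli percolation: there is a deterministic constant $\rho=\rho(\pi)<\infty$ such that $\nu$-almost surely, for every fixed $x_{0}\in\mathcal{C}$,
\[
d_{\mathcal{C}}(x_{0},y)\le\rho\,|y-x_{0}|_{\infty}
\]
for all $y\in\mathcal{C}$ outside some (random but finite) exceptional set. I would apply this with $x_{0}$ chosen as any point of $\partial\mathcal{C}_{0}$; this choice is legitimate because $\mathcal{C}_{0}$ is finite and surrounded by boxes of $\mathcal{C}$, so $\partial\mathcal{C}_{0}$ is nonempty and lies in $\mathcal{C}$. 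Then, for $l$ large enough, every $y\in\mathcal{C}$ lying in the Euclidean box of side $2\lfloor l/\rho\rfloor+1$ centred at $x_{0}$ is at chemical distance at most $l$ from $x_{0}$, and consequently at most $l$ from $\partial\mathcal{C}_{0}$.

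To conclude I would count these points. By the ergodic theorem applied to the indicator of $\mathcal{C}$, the density of $\mathcal{C}$ inside an $\left(l/\rho\right)$-box around $x_{0}$ converges to $\theta_{\infty}$ as $l\to\infty$, so for $l$ large the number of boxes of $\mathcal{C}$ in that region is at least $\tfrac{1}{2}\theta_{\infty}(l/\rho)^{2}$. Setting $\theta:=\theta_{\infty}/(2\rho^{2})$ yields the claim, and since $\theta_{\infty}$ and $\rho$ depend only on $\pi$, so does $\theta$.

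The only non-elementary input is the Antal--Pisztora shape theorem; everything else is the ergodic density of the infinite cluster plus an immediate geometric inclusion. The one subtle point is that $x_{0}$ is selected as a function of $\omega$ rather than being deterministic, but this causes no harm because the shape theorem gives the chemical-distance bound for a fixed starting point almost surely simultaneously for every choice of $x_{0}\in\mathcal{C}$, so in particular for the one we pick.
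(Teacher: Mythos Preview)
Your proposal is correct and follows essentially the same approach as the paper: both combine the positive density of the infinite cluster (via ergodicity) with the Antal--Pisztora comparison of chemical and Euclidean distances in supercritical percolation. The paper's version is terser and centres the argument at the origin rather than at a point of $\partial\mathcal{C}_{0}$, but the content is the same; your extra remark about the random choice of $x_{0}$ is a nice touch that the paper leaves implicit.
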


\begin{proof}
By ergodicity the cluster $\mathcal{C}$ has an almost sure positive
density, so in particular
\[
\liminf_{l\rightarrow\infty}\frac{\left|\mathcal{C}\cap\left[-l,l\right]^{2}\right|}{\left|\left[-l,l\right]^{2}\right|}>0.
\]
By \cite{AntalPisztora1996chemicaldistancesupercritical}, there exists
a positive constant $\rho$ such that boxes of graph distance $l$
from the origin must be in the box $\left[-\frac{1}{\rho}l,\frac{1}{\rho}l\right]^{2}$
for $l$ large enough. Combining these two facts proves the claim.
\end{proof}
This claim together with a large deviation estimate yields
\begin{cor}
\label{cor:densityofexcellentboxes}For $l$ large enough, the number
of excellent boxes in $\mathcal{C}$ that are at graph distance in
$\mathcal{C}$ at most $l$ from $\partial\mathcal{C}_{0}$ is greater
than $\theta^{\prime}l^{2}$, where $\theta^{\prime}=0.99\,\theta p_{L}$.
\end{cor}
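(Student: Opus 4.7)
The plan is to combine the previous claim with a large deviation estimate applied to the independent Bernoulli excellence events. The previous claim already supplies a set $S_l$ of at least $\theta l^{2}$ boxes of $\mathcal{C}$ within graph distance $l$ of $\partial\mathcal{C}_{0}$, so what remains is to show that a fixed fraction of those boxes is also excellent.

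First I would observe that excellence of the box $Li+[L]^{2}$ depends only on the environment inside that box, and the boxes indexed by different $i$ are pairwise disjoint; hence under $\nu$ the excellence events for distinct boxes are mutually independent. Being good and being excellent are both increasing in the easy-site configuration of the box, so the FKG inequality gives
\[
\nu\bigl(\text{excellent}\mid\text{good}\bigr)\;\ge\;\nu(\text{excellent})\;=\;p_{L}.
\]
This bound is what will ultimately produce the constant $p_{L}$ in the statement rather than some less explicit conditional probability.

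Next I would condition on the goodness pattern $\mathcal{F}_{\text{good}}$, the $\sigma$-algebra generated by the indicators of $\{B\text{ is good}\}$ for every $L$-box $B$. This sigma-algebra determines $\mathcal{C}$, $\mathcal{C}_{0}$, $\partial\mathcal{C}_{0}$, and the set $S_{l}$ counted by the previous claim. Conditional on $\mathcal{F}_{\text{good}}$, the excellence events of boxes in $S_{l}$ (all of which are good) are independent Bernoullis with parameter at least $p_{L}$, so if $X_{l}$ denotes the number of excellent boxes in $S_{l}$, Hoeffding's inequality yields
\[
\pp\!\left(X_{l}<0.99\,p_{L}\,|S_{l}|\,\bigm|\,\mathcal{F}_{\text{good}}\right)\;\le\;e^{-c|S_{l}|}
\]
for some $c>0$ depending only on $p_{L}$. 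On the almost-sure event $\{|S_{l}|\ge\theta l^{2}\}$ this bound becomes $e^{-c\theta l^{2}}$, which is summable in $l$; Borel--Cantelli then gives $X_{l}\ge 0.99\,p_{L}|S_{l}|$ for all but finitely many $l$, and combining with $|S_{l}|\ge\theta l^{2}$ (a.s.\ for $l$ large by the previous claim) produces $X_{l}\ge 0.99\,p_{L}\theta l^{2}=\theta'l^{2}$.

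The only step that requires care is the conditional structure: because goodness and excellence of a single box are determined by the same i.i.d.\ environment inside that box, they cannot be treated as independent, so one must pass through FKG to obtain $p_{L}$ as the conditional success rate. Everything else, namely the independence of excellence across disjoint boxes, the Hoeffding concentration, and the Borel--Cantelli combination with the previous claim, is routine.
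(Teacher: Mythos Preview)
Your proposal is correct and is precisely the argument the paper has in mind: the paper's ``proof'' is the single sentence ``This claim together with a large deviation estimate yields'', and you have simply unpacked that sentence --- conditioning on the goodness $\sigma$-algebra to make $S_l$ deterministic, using FKG to lower bound $\nu(\text{excellent}\mid\text{good})$ by $p_L$, applying a Chernoff/Hoeffding bound to the conditionally i.i.d.\ excellence indicators, and concluding with Borel--Cantelli. The FKG step is the one non-automatic detail, and you handled it correctly.
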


We can now put all the ingredients together and obtain the upper bound.

Fix $c>0$, and $l=\frac{c}{\sqrt{q}}$. By \corref{densityofexcellentboxes},
for $q$ small, there are at least $\frac{\theta^{\prime}c^{2}}{q}$
excellent boxes at distance smaller than $l$ from $\partial\mathcal{C}_{0}$.
If one of them contains an empty site at the bottom left corner, the
origin will be emptied by time $\left(l+1\right)L^{2}+T_{0}$. For
$q$ small enough, this time is bounded by $\frac{2cL^{2}}{\sqrt{q}}$.
The probability that non of them do is $\left(1-q\right)^{\frac{\theta^{\prime}c^{2}}{q}}$,
thus it tends to $0$ uniformly in $q$ as $c\rightarrow\infty$. 

\subsubsection{Proof of \eqref{fa12d2bplowerbound}}

The lower bound results from the simple observation, that the root
could only be infected by time $t$ if there is an empty site at distance
smaller than $t$. The probability of that event is $1-\left(1-q\right)^{4t^{2}}$,
and taking $t=\frac{a}{\sqrt{q}}$ and $q$ small enough this probability
is bounded by $1-e^{-2a}$. This tends to $0$ with $a$ uniformly
in $q$, which finishes the proof.

\subsection{Mixed threshold KCM on $\protect\zz^{2}$}

\subsubsection{Spectral gap}

The spectral gap of this model is dominated by that of the FA2f model.
Fix any $\gamma$ strictly greater than the gap of FA2f. Then there
is a local non-constant function $f$ such that
\[
\frac{\mathcal{D}^{\text{FA2f}}f}{\text{Var}f}\le\gamma,
\]
where $\mathcal{D}^{\text{FA2f}}$ is the Dirichlet form of the FA2f
model.

Since $f$ is local, it is supported in some square of size $L\times L$,
for $L$ big enough. $\nu$-almost surely it is possible to find a
far away square in $\zz^{2}$ of size $L\times L$ that contains only
difficult sites. By translation invariance of the FA2f model we can
assume that this is the square in which $f$ is supported. In this
case, $\mathcal{D}f=\mathcal{D}^{\text{FA2f}}f$, and this shows that
indeed the gap of the model with random threshold is smaller than
that of FA2f, which by \cite{cancrini2008kcmzoo} is bounded by $e^{-\nicefrac{c}{q}}$.

\subsubsection{Proof of \eqref{fa12d2kcmupperbound}}

In this part we will use \corref{dirichletequalsexpectation} in order
to bound $\tau_{0}$ by a path argument. As in the proof of the upper
bound for the bootstrap percolation, we will consider the good squares
(see \defref{goodsquareforfa12}) and their infinite cluster. In fact,
by \claimref{squaresaregood}, by choosing $L$ big enough we may
assume that the box $\left[L\right]^{2}$ is in this cluster. Let
us fix this $L$ until the end of this part. We will also choose an
infinite self avoiding path of good boxes starting at the origin and
denote it by $i_{0},i_{1},i_{2},\dots$. Note that this path depends
on $\omega$ but not on $\eta$.

On this cluster empty sites will be able to propagate, and the next
definition will describe the seed needed in order to start this propagation.
\begin{defn}
A box in $\zz^{2}$ is \textit{essentially empty} if it is good and
contains an entire line or an entire column of empty sites. This will
depend on both $\omega$ and $\eta$.
\end{defn}

In order to guarantee the presence of an essentially empty box we
will fix $l=q^{-L-1}$, and define the bad event
\begin{defn}
$B=\left\{ \text{none of the boxes }i_{0},\dots,i_{l}\text{ is essentially empty}\right\} $.
For fixed $\omega$ the path $i_{0},i_{1},i_{2},\dots$ is fixed,
and $B$ is an event in $\Omega$.
\end{defn}

A simple bound shows that
\begin{equation}
\mu\left(B\right)\le\left(1-q^{L}\right)^{l}\le e^{-\nicefrac{1}{q}}.\label{eq:badeventhassmallprob}
\end{equation}

We can use this bound in order to bound the hitting time at $B$:
\begin{claim}
\label{claim:taubisbig}There exists $C>0$ such that $\pp_{\mu}\left(\tau_{B}\le t\right)\le Ce^{-\nicefrac{1}{q}}\,t$.
\end{claim}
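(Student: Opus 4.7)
The plan is a first-moment estimate exploiting the stationarity of $\mu$ under the dynamics together with reversibility. Note that $B$ is determined by the finitely many occupation variables in the set $W:=\bigcup_{k=0}^{l}\left(Li_{k}+[L]^{2}\right)$, of cardinality $|W|=(l+1)L^{2}$, which is polynomial in $1/q$.

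I would start from
\[
\pp_{\mu}\left(\tau_{B}\le t\right)\le\pp_{\mu}(\eta(0)\in B)+\ee_{\mu}[N_{t}],
\]
where $N_{t}$ counts the transitions $B^{c}\to B$ during $[0,t]$. Stationarity gives $\pp_{\mu}(\eta(0)\in B)=\mu(B)\le e^{-1/q}$ by \eqref{badeventhassmallprob}, and the standard formula for the expected number of jumps of a stationary continuous-time Markov chain yields
\[
\ee_{\mu}[N_{t}]=t\sum_{x\in W}\int\One_{B^{c}}(\eta)\,\One_{B}(\eta^{(x)})\,c_{x}(\eta)\,\mu_{x}(1-\eta_{x})\,d\mu(\eta),
\]
where $\eta^{(x)}$ denotes $\eta$ with the bit at $x$ flipped; the sum can be restricted to $x\in W$ because flips outside $W$ do not change $\One_{B}$.

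The decisive step is to apply the detailed-balance identity $\mu(\eta)\,c_{x}(\eta)\,\mu_{x}(1-\eta_{x})=\mu(\eta^{(x)})\,c_{x}(\eta^{(x)})\,\mu_{x}(\eta_{x})$: the change of variables $\eta\leftrightarrow\eta^{(x)}$ transforms each summand above into the rate of the reverse $B\to B^{c}$ flips, and the trivial bounds $c_{x}\le 1$ and $\mu_{x}\le 1$ then give $\ee_{\mu}[N_{t}]\le t|W|\mu(B)$. Combining yields $\pp_{\mu}\left(\tau_{B}\le t\right)\le(1+t|W|)\,\mu(B)$. Since $|W|$ is polynomial in $1/q$ while $\mu(B)\le e^{-1/q}$, the polynomial prefactor is absorbed into the exponential for $q$ small, giving the claimed bound $Ce^{-1/q}t$ (if one insists on preserving the exact constant in the exponent, one can simply take $l$ marginally larger than $q^{-L-1}$ in the definition of $B$, so that $\mu(B)$ has enough slack to swallow $|W|$). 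The only conceptual ingredient is the reversibility trade that swaps the rate of entering $B$ for the rate of leaving $B$, which supplies the crucial $\mu(B)$ factor; all other steps are routine.
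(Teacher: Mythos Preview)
Your argument is correct and reaches the same bound as the paper, with the same minor caveat that the polynomial prefactor $|W|$ must be absorbed into the exponential (the paper's own proof glosses over this just as you do). The route differs slightly: the paper works directly with the graphical construction, bounding $\pp_\mu(\tau_B\le t)$ by the probability of seeing unusually many clock rings in $W$ plus a union bound over the (at most $2|W|t$) ring times, invoking stationarity to say that the configuration at each ring time is $\mu$-distributed. You instead count $B^c\to B$ crossings and use detailed balance to swap the entry rate for the exit rate, which is then trivially bounded by $|W|\mu(B)$. Your version is arguably cleaner, since it avoids the (unnecessary) large-deviation split on the number of Poisson rings; on the other hand, the paper's argument uses only stationarity rather than reversibility---though in fact the equality of expected entry and exit counts that you exploit also follows from stationarity alone, so your appeal to detailed balance, while correct, is not essential either.
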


\begin{proof}
We use the graphical construction of the Markov process. In order
to hit $B$, we must hit it at a certain clock ring taking place in
one of the sites of $\cup_{n=1}^{l}\left(Li+\left[L\right]^{2}\right)$.
Therefore,
\begin{align*}
\pp\left(\tau_{B}\le t\right) & \le\mathbb{P}\left[\text{more than }2\left(2L+1\right)^{2}lt\text{ rings by time }t\right]+2\left(2L+1\right)^{2}l\,t\,\mu\left(B\right)\\
 & \le e^{-\left(2L+1\right)^{2}q^{-L-1}t}+2\left(2L+1\right)^{2}q^{-L-1}\,t\,e^{-\nicefrac{1}{q}}\le Ce^{-\nicefrac{1}{q}}\,t.
\end{align*}
\end{proof}
In order to bound $\tau_{0}$ we will study the hitting time of $A=B\cup\left\{ \eta\left(0\right)=0\right\} $.
\begin{lem}
\label{lem:fa12path}Fix $\eta\in\Omega$. Then there exists a path
$\eta_{0},\dots,\eta_{N}$ of configurations and a sequence of sites
$x_{0},\dots x_{N-1}$ such that
\begin{enumerate}
\item $\eta_{0}=\eta$,
\item $\eta_{N}\in A$,
\item $\eta_{i+1}=\eta_{i}^{x_{i}}$,
\item $c_{x_{i}}\left(\eta_{i}\right)=1$,
\item $N\le4L^{2}l$,
\item For all $i\le N$, $\eta_{i}$ differs from $\eta$ at at most $3L$
points, contained in at most two neighboring boxes.
\end{enumerate}
\end{lem}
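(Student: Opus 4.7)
The plan is to split on whether $\eta\in A$. If so, take $N=0$ and the conclusion is trivial. Otherwise $\eta_{0}\neq0$ and, since $\eta\notin B$, there exists a minimal index $k^{*}\le l$ for which $i_{k^{*}}$ is essentially empty in $\eta$. The path will then be built in $k^{*}+1$ stages, the stage indexed by $j$ (for $j=k^{*},k^{*}-1,\dots,1$) transforming a configuration in which $i_{j}$ has an empty column (or line) adjacent to $i_{j-1}$ into one in which $i_{j-1}$ has an empty column adjacent to $i_{j-2}$, while restoring the part of $i_{j}$ modified during that stage back to its $\eta$-state. A final stage uses the empty column in $i_{0}$ to empty the origin itself.

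The engine inside each stage is a \emph{zigzag emptying} of a good box. Given a good box $B$ adjacent to another box with an empty column bordering $B$, one first flips the easy site in $B$'s boundary column (one empty neighbor across the border suffices), then flips the sites above and below it in alternation, each new flip having two empty neighbors --- one across the border and one within the column from the previous flip. The same mechanism runs in reverse when \emph{filling back} a column that is \emph{flanked} by two empty columns (one on each side of it): every difficult site then has two lateral empty neighbors regardless of the order of flips, so the fill back is always feasible. Since each single-site flip is reversible at the level of the constraint (the constraint $c_{x}$ does not depend on $\eta_{x}$), these emptying and filling moves may be freely composed.

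With this engine a single stage first empties the boundary column of $i_{j-1}$ using $i_{j}$'s empty column as the adjacent source, then shifts the empty column inside $i_{j-1}$ one step at a time toward the $i_{j-2}$-side: each shift empties the next column using the current one as source, then fills back the previous column while it is flanked by two empty columns. Once the empty column has reached the far side of $i_{j-1}$, the same flanked-fill argument --- with the new empty column of $i_{j-1}$ and $i_{j}$'s still-present column providing the two flanking sources --- allows filling back $i_{j}$'s column. Throughout the stage the modifications are confined to the two neighboring boxes $i_{j}$ and $i_{j-1}$, with a peak of at most $3L$ sites (one column in $i_{j}$ plus up to two columns in $i_{j-1}$ at the critical flanked-fill moment). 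Each stage uses $O(L^{2})$ single-site flips, and with at most $l+1$ stages the bound $N\le4L^{2}l$ follows. The final stage uses the same zigzag inside $i_{0}$ to position the empty column to pass through the origin, after which one flip puts $\eta_{0}$ to $0$, its constraint $\omega_{0}\le2$ being satisfied by the two adjacent empty sites in the column.

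The main technical obstacle will be the careful sequencing of shifts and fill-backs so that \emph{every} intermediate configuration stays within the $3L$-point, two-neighboring-box bound and that every single-site flip satisfies its kinetic constraint --- particularly at the top and bottom ends of a column being filled back, where the lateral flanking empty columns terminate. The goodness of the boxes must be used in full here: each line has an easy site, so by temporarily emptying a short horizontal strip along the top or bottom line (still within the mod budget) those end sites also see two empty neighbors during the refill. The routine but careful bookkeeping around these edge cases is the substantive part of the argument.
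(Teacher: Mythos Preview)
Your approach is the same as the paper's: find the essentially empty box along the path $i_0,\dots,i_l$, then slide an empty column (or row) box-by-box back to the origin, restoring each box to its $\eta$-state after the empty column has moved on. The paper's proof is in fact much terser than yours, deferring the mechanics entirely to two figures showing the column-shift and the column-to-row rotation.

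Two points in your writeup need correcting. First, the fill-back of $i_j$'s boundary column cannot wait until the empty column has crossed all of $i_{j-1}$: by then the only empty column in $i_{j-1}$ sits on the \emph{far} side and is not adjacent to $i_j$'s column, so your ``flanking sources'' are not actually flanking anything. The fix is simply to fill back $i_j$'s column \emph{immediately} after the entry column of $i_{j-1}$ has been emptied, while they are still adjacent; one adjacent empty column suffices, because you refill from both ends toward the column's easy site and flip the easy site last (only one empty neighbour needed). This also makes the ``flanked by two columns'' condition unnecessary throughout --- a single adjacent empty column is always enough in a good box.

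Second, you do not address what happens when the path $i_0,i_1,\dots$ turns a corner, i.e.\ when $i_{j-1}$ sits above or below $i_j$ rather than beside it. Here an empty \emph{column} must be converted into an empty \emph{row}, and this rotation is where the peak of $3L$ modified sites actually occurs (three full columns are momentarily empty during the turn). The paper handles this explicitly with a separate figure; your sketch should at least flag the case.
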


\begin{proof}
If $\eta\in A$, we take the path $\eta$ with $N=0$. Otherwise $\eta\in B^{c}$,
so there is an essentially empty box in $i_{0},\dots,i_{l}$, which
contain an empty column (or row). We can then create an empty column
(row) next to it and propagate that column (row) as in \figref{propagationgacolumn12}.
When the path rotates we can rotate this propagating column (row)
as show in \figref{rotatingcol12}.
\end{proof}
\begin{figure}
\begin{tikzpicture}[scale=0.3, every node/.style={scale=0.6}]
	\def\xx{0}
	\def\yy{0}
	
	\draw[step=5, black, very thin,xshift=\xx cm, yshift=\yy cm] (0,0) grid +(3,5);
	\draw[step=1, gray, very thin,xshift=\xx cm, yshift=\yy cm] (0,0) grid +(3,5);

	\foreach \y in {0,...,4}{
		\draw (\xx+0.5,\yy+\y+0.5) node[black] {$0$};
	}
	
	\draw (\xx+1.5,\yy+1.5) node[black] {$e$};
	\draw (\xx+2.5,\yy+2.5) node[black] {$e$};

	\draw[->]  (3.5,2.5) to (5,2.5);

	\def\xx{5.5}
	\def\yy{0}
	
	\draw[step=5, black, very thin,xshift=\xx cm, yshift=\yy cm] (0,0) grid +(3,5);
	\draw[step=1, gray, very thin,xshift=\xx cm, yshift=\yy cm] (0,0) grid +(3,5);

	\foreach \y in {0,...,4}{
		\draw (\xx+0.5,\yy+\y+0.5) node[black] {$0$};
	}

	\draw (\xx+1.5,\yy+1.5) node[black] {$0$};
	\draw (\xx+2.5,\yy+2.5) node[black] {$e$};

	\draw[->]  (9,2.5) to (10.5,2.5);

	\def\xx{11}
	\def\yy{0}
	
	\draw[step=5, black, very thin,xshift=\xx cm, yshift=\yy cm] (0,0) grid +(3,5);
	\draw[step=1, gray, very thin,xshift=\xx cm, yshift=\yy cm] (0,0) grid +(3,5);

	\foreach \y in {0,...,4}{
		\draw (\xx+0.5,\yy+\y+0.5) node[black] {$0$};
	}
	\foreach \y in {0,...,4}{
		\draw (\xx+1.5,\yy+\y+0.5) node[black] {$0$};
	}

	\draw (\xx+2.5,\yy+2.5) node[black] {$e$};

	\draw[->]  (14.5,2.5) to (16,2.5);

	\def\xx{16.5}
	\def\yy{0}
	
	\draw[step=5, black, very thin,xshift=\xx cm, yshift=\yy cm] (0,0) grid +(3,5);
	\draw[step=1, gray, very thin,xshift=\xx cm, yshift=\yy cm] (0,0) grid +(3,5);

	\foreach \y in {0,...,4}{
		\draw (\xx+0.5,\yy+\y+0.5) node[black] {$0$};
	}
	\foreach \y in {0,...,4}{
		\draw (\xx+1.5,\yy+\y+0.5) node[black] {$0$};
	}
	
	\foreach \y in {0,...,4}{
		\draw (\xx+2.5,\yy+\y+0.5) node[black] {$0$};
	}

	\draw[->]  (20,2.5) to (21.5,2.5);

	\def\xx{22}
	\def\yy{0}
	
	\draw[step=5, black, very thin,xshift=\xx cm, yshift=\yy cm] (0,0) grid +(3,5);
	\draw[step=1, gray, very thin,xshift=\xx cm, yshift=\yy cm] (0,0) grid +(3,5);

	\foreach \y in {0,...,4}{
		\draw (\xx+0.5,\yy+\y+0.5) node[black] {$0$};
	}
	\foreach \y in {0,...,3}{
		\draw (\xx+1.5,\yy+\y+0.5) node[black] {$0$};
	}
	
	\foreach \y in {0,...,4}{
		\draw (\xx+2.5,\yy+\y+0.5) node[black] {$0$};
	}

	\draw[->]  (25.5,2.5) to (27,2.5);

	\def\xx{27.5}
	\def\yy{0}
	
	\draw[step=5, black, very thin,xshift=\xx cm, yshift=\yy cm] (0,0) grid +(3,5);
	\draw[step=1, gray, very thin,xshift=\xx cm, yshift=\yy cm] (0,0) grid +(3,5);

	\foreach \y in {0,...,4}{
		\draw (\xx+0.5,\yy+\y+0.5) node[black] {$0$};
	}
	
	\foreach \y in {0,...,4}{
		\draw (\xx+2.5,\yy+\y+0.5) node[black] {$0$};
	}
	
	\draw (\xx+1.5,\yy+1.5) node[black] {$0$};

	\draw[->]  (31,2.5) to (32.5,2.5);

	\def\xx{33}
	\def\yy{0}
	
	\draw[step=5, black, very thin,xshift=\xx cm, yshift=\yy cm] (0,0) grid +(3,5);
	\draw[step=1, gray, very thin,xshift=\xx cm, yshift=\yy cm] (0,0) grid +(3,5);

	\foreach \y in {0,...,4}{
		\draw (\xx+0.5,\yy+\y+0.5) node[black] {$0$};
	}
	
	\foreach \y in {0,...,4}{
		\draw (\xx+2.5,\yy+\y+0.5) node[black] {$0$};
	}
	
	\draw (\xx+1.5,\yy+1.5) node[black] {$e$};
	
\end{tikzpicture}

\caption{\label{fig:propagationgacolumn12}Creating an empty column and propagating
it using the easy sites. $0$ represents an empty site, otherwise
the state is the initial one. $e$ stands for an easy site.}

\end{figure}
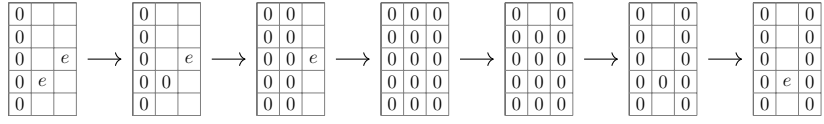

\begin{figure}
\begin{tikzpicture}[scale=0.3, every node/.style={scale=0.6}]
	\def\xx{0}
	\def\yy{0}
	
	\draw[step=5, black, very thin,xshift=\xx cm, yshift=\yy cm] (0,0) grid +(5,5);
	\draw[step=1, gray, very thin,xshift=\xx cm, yshift=\yy cm] (0,0) grid +(5,5);

	\foreach \y in {0,...,4}{
		\draw (\xx+0.5,\yy+\y+0.5) node[black] {$0$};
	}
	
	\draw (\xx+1.5,\yy+1.5) node[black] {$e$};
	\draw (\xx+2.5,\yy+2.5) node[black] {$e$};
	\draw (\xx+3.5,\yy+4.5) node[black] {$e$};
	\draw (\xx+3.5,\yy+0.5) node[black] {$e$};
	\draw (\xx+4.5,\yy+3.5) node[black] {$e$};

	\draw[->]  (6,2.5) to (8,2.5);

	\def\xx{9}
	\def\yy{0}
	
	\draw[step=5, black, very thin,xshift=\xx cm, yshift=\yy cm] (0,0) grid +(5,5);
	\draw[step=1, gray, very thin,xshift=\xx cm, yshift=\yy cm] (0,0) grid +(5,5);

	\foreach \y in {0,...,4}{
		\draw (\xx+0.5,\yy+\y+0.5) node[black] {$0$};
	}
	\foreach \y in {0,...,4}{
		\draw (\xx+1.5,\yy+\y+0.5) node[black] {$0$};
	}
	\foreach \y in {0,...,4}{
		\draw (\xx+2.5,\yy+\y+0.5) node[black] {$0$};
	}
	
	\draw (\xx+3.5,\yy+4.5) node[black] {$e$};
	\draw (\xx+3.5,\yy+0.5) node[black] {$e$};
	\draw (\xx+4.5,\yy+3.5) node[black] {$e$};

	\draw[->]  (15,2.5) to (17,2.5);

	\def\xx{18}
	\def\yy{0}
	
	\draw[step=5, black, very thin,xshift=\xx cm, yshift=\yy cm] (0,0) grid +(5,5);
	\draw[step=1, gray, very thin,xshift=\xx cm, yshift=\yy cm] (0,0) grid +(5,5);

	\foreach \y in {0,...,4}{
		\draw (\xx+0.5,\yy+\y+0.5) node[black] {$0$};
	}

	\foreach \y in {0,...,4}{
		\draw (\xx+2.5,\yy+\y+0.5) node[black] {$0$};
	}
	
	\draw (\xx+1.5,\yy+1.5) node[black] {$e$};
	\draw (\xx+1.5,\yy+4.5) node[black] {$0$};
	\draw (\xx+3.5,\yy+4.5) node[black] {$e$};
	\draw (\xx+3.5,\yy+0.5) node[black] {$e$};
	\draw (\xx+4.5,\yy+3.5) node[black] {$e$};

	\draw[->]  (24,2.5) to (26,2.5);

	\def\xx{27}
	\def\yy{0}
	
	\draw[step=5, black, very thin,xshift=\xx cm, yshift=\yy cm] (0,0) grid +(5,5);
	\draw[step=1, gray, very thin,xshift=\xx cm, yshift=\yy cm] (0,0) grid +(5,5);

	\foreach \y in {0,...,4}{
		\draw (\xx+0.5,\yy+\y+0.5) node[black] {$0$};
	}

	\foreach \y in {0,...,4}{
		\draw (\xx+4.5,\yy+\y+0.5) node[black] {$0$};
	}

	\draw (\xx+1.5,\yy+4.5) node[black] {$0$};
	\draw (\xx+2.5,\yy+4.5) node[black] {$0$};
	\draw (\xx+3.5,\yy+4.5) node[black] {$0$};
	
	\draw (\xx+1.5,\yy+1.5) node[black] {$e$};
	\draw (\xx+3.5,\yy+0.5) node[black] {$e$};
	\draw (\xx+2.5,\yy+2.5) node[black] {$e$};

	\draw[->]  (33,2.5) to (35,2.5);

	\def\xx{36}
	\def\yy{0}
	
	\draw[step=5, black, very thin,xshift=\xx cm, yshift=\yy cm] (0,0) grid +(5,5);
	\draw[step=1, gray, very thin,xshift=\xx cm, yshift=\yy cm] (0,0) grid +(5,5);

	\foreach \y in {0,...,4}{
		\draw (\xx+0.5,\yy+\y+0.5) node[black] {$0$};
	}

	\foreach \y in {0,...,4}{
		\draw (\xx+3.5,\yy+\y+0.5) node[black] {$0$};
	}

	\draw (\xx+1.5,\yy+4.5) node[black] {$0$};
	\draw (\xx+2.5,\yy+4.5) node[black] {$0$};
	\draw (\xx+4.5,\yy+4.5) node[black] {$0$};
	
	\draw (\xx+1.5,\yy+1.5) node[black] {$e$};
	\draw (\xx+4.5,\yy+3.5) node[black] {$e$};
	\draw (\xx+2.5,\yy+2.5) node[black] {$e$};

	\draw[->]  (42,2.5) to (44,2.5);

	\def\xx{0}
	\def\yy{-7}

	\draw[step=5, black, very thin,xshift=\xx cm, yshift=\yy cm] (0,0) grid +(5,5);
	\draw[step=1, gray, very thin,xshift=\xx cm, yshift=\yy cm] (0,0) grid +(5,5);

	\foreach \y in {0,...,4}{
		\draw (\xx+0.5,\yy+\y+0.5) node[black] {$0$};
	}
	\foreach \x in {1,...,4}{
		\draw (\xx+\x+0.5,\yy+4.5) node[black] {$0$};
	}
	
	\draw (\xx+1.5,\yy+1.5) node[black] {$e$};
	\draw (\xx+2.5,\yy+2.5) node[black] {$e$};
	\draw (\xx+3.5,\yy+0.5) node[black] {$e$};
	\draw (\xx+4.5,\yy+3.5) node[black] {$e$};

	\draw[->]  (6,-4.5) to (8,-4.5);

	\def\xx{9}
	\def\yy{-7}

	\draw[step=5, black, very thin,xshift=\xx cm, yshift=\yy cm] (0,0) grid +(5,5);
	\draw[step=1, gray, very thin,xshift=\xx cm, yshift=\yy cm] (0,0) grid +(5,5);

	\foreach \y in {0,...,4}{
		\draw (\xx+0.5,\yy+\y+0.5) node[black] {$0$};
	}
	\foreach \x in {1,...,4}{
		\draw (\xx+\x+0.5,\yy+0.5) node[black] {$0$};
	}
	
	\draw (\xx+1.5,\yy+1.5) node[black] {$e$};
	\draw (\xx+2.5,\yy+2.5) node[black] {$e$};
	\draw (\xx+3.5,\yy+4.5) node[black] {$e$};
	\draw (\xx+4.5,\yy+3.5) node[black] {$e$};

	\draw[->]  (15,-4.5) to (17,-4.5);

	\def\xx{18}
	\def\yy{-7}

	\draw[step=5, black, very thin,xshift=\xx cm, yshift=\yy cm] (0,0) grid +(5,5);
	\draw[step=1, gray, very thin,xshift=\xx cm, yshift=\yy cm] (0,0) grid +(5,5);

	\foreach \y in {1,...,4}{
		\draw (\xx+0.5,\yy+\y+0.5) node[black] {$0$};
	}
	\foreach \x in {1,...,4}{
		\draw (\xx+\x+0.5,\yy+1.5) node[black] {$0$};
	}
	
	\draw (\xx+3.5,\yy+0.5) node[black] {$e$};
	\draw (\xx+2.5,\yy+2.5) node[black] {$e$};
	\draw (\xx+3.5,\yy+4.5) node[black] {$e$};
	\draw (\xx+4.5,\yy+3.5) node[black] {$e$};

	\draw[->]  (24,-4.5) to (26,-4.5);

	\def\xx{27}
	\def\yy{-7}

	\draw[step=5, black, very thin,xshift=\xx cm, yshift=\yy cm] (0,0) grid +(5,5);
	\draw[step=1, gray, very thin,xshift=\xx cm, yshift=\yy cm] (0,0) grid +(5,5);

	\foreach \x in {0,...,4}{
		\draw (\xx+\x+0.5,\yy+4.5) node[black] {$0$};
	}

	\draw (\xx+3.5,\yy+0.5) node[black] {$e$};
	\draw (\xx+2.5,\yy+2.5) node[black] {$e$};
	\draw (\xx+1.5,\yy+1.5) node[black] {$e$};
	\draw (\xx+4.5,\yy+3.5) node[black] {$e$};
\end{tikzpicture}

\caption{\label{fig:rotatingcol12}Rotating an empty column in a good box.
$0$ represents an empty site, otherwise the state is the initial
one. $e$ stands for an easy site.}

\end{figure}
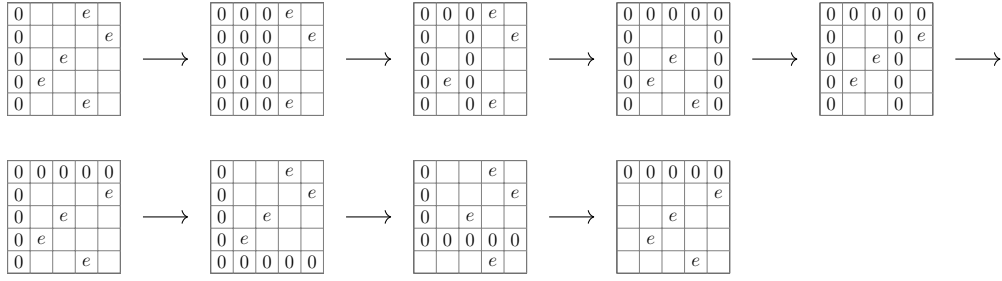

We can use this path together with \corref{dirichletequalsexpectation}
in order to bound $\tau_{A}$.
\begin{lem}
\label{lem:upperboundontaul}There exists $C_{L}>0$ (that may depends
on $L$ but not on $q$) such that $\mu\left(\tau_{A}\right)\le C_{L}\,q^{-5L-2}$.
\end{lem}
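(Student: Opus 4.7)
The plan is to combine the inequality $\mu(\tau_A)\le\overline{\tau}_A$ (which follows from the Cauchy--Schwarz argument displayed in the remark after \propref{exponentialdecayofhittingtime}, together with \corref{dirichletequalsexpectation}) with a canonical path bound of Diaconis--Stroock type for $\overline{\tau}_A$, using the path constructed in \lemref{fa12path}. So the whole task reduces to proving $\overline{\tau}_A\le C_L q^{-5L-2}$.

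For $f\in V_A$ and $\eta\in A^c$, decompose $f(\eta)=\sum_{i=0}^{N-1}(f(\eta_i)-f(\eta_{i+1}))$ along the path $\gamma(\eta)=(\eta_0,\dots,\eta_N)$ provided by \lemref{fa12path} (noting $f(\eta_N)=0$ since $\eta_N\in A$), square via Cauchy--Schwarz, multiply by $\mu(\eta)$, sum over $\eta$, and interchange the sums over $\eta$ and over path steps. Reorganising by the oriented edge $e=(\zeta,\zeta^{x})=(\eta_i,\eta_{i+1})$ gives
\[
\mu(f^2)\ \le\ \biggl(\max_{e}\frac{1}{\pi(e)}\sum_{\eta:\,e\in\gamma(\eta)}\mu(\eta)\,N(\eta)\biggr)\mathcal{D}f,
\]
where $\pi(e)$ is the weight of $e$ in $\mathcal{D}f$. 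Item~4 of \lemref{fa12path} guarantees $c_x(\zeta)=1$ along the path, so $\pi(e)\ge q\,\mu(\zeta)$.

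It then remains to control the congestion. Item~5 of \lemref{fa12path} bounds $N(\eta)\le 4L^2 q^{-L-1}$. Item~6 asserts that whenever $e\in\gamma(\eta)$ the configurations $\eta$ and $\zeta$ differ at at most $3L$ sites, all contained in two boxes neighboring the one containing the update site $x$. This immediately yields $\mu(\eta)/\mu(\zeta)\le((1-q)/q)^{3L}\le q^{-3L}$; moreover, by enumerating the finitely many candidate pairs of boxes adjacent to $x$, the positions of the at most $3L$ discrepancies inside them, and the values of $\eta$ there, we obtain $\#\{\eta:e\in\gamma(\eta)\}\le K_L$ with $K_L$ depending only on $L$. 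Multiplying these three factors gives $\overline{\tau}_A\le 4L^2K_L\,q^{-4L-2}\le C_L\,q^{-5L-2}$, as desired.

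The step requiring the most care is the last one: the combinatorial reconstruction of $\eta$ from the data $(e,\zeta)$. One must inspect the construction of $\gamma$ in \lemref{fa12path}, where a single empty row or column is propagated through the successive boxes $i_0,i_1,\dots$, and confirm that the modification region really does stay confined to at most two adjacent boxes at every step. Only then is $K_L$ truly $q$-independent, and no hidden $q$-dependence sneaks into the final bound.
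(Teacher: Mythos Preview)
Your argument is correct and follows essentially the same canonical-path scheme as the paper: decompose along the path of \lemref{fa12path}, apply Cauchy--Schwarz, change variables to the edge $(\zeta,\zeta^x)$, and close via \corref{dirichletequalsexpectation}. The only differences are cosmetic---the paper applies the computation directly to the function $\tau_A$ rather than to a generic $f\in V_A$ (so it bounds $\mu(\tau_A)$ without passing through $\overline{\tau}_A$), and it counts more crudely, replacing your reconstruction bound $\#\{\eta:e\in\gamma(\eta)\}\le K_L$ by the wasteful estimate $\sum_i \One_{z=x_i}\le N$, which costs an extra factor of $N\sim q^{-L-1}$ and produces the stated exponent $-5L-2$ where your version actually gives $-4L-2$.
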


\begin{proof}
Since $\tau$ vanishes on $A$, taking the path defined in \lemref{fa12path},
\[
\tau_{A}\left(\eta\right)=\sum_{i=0}^{N-1}\left(\tau_{A}\left(\eta_{i}\right)-\tau_{A}\left(\eta_{i+1}\right)\right).
\]
In the following we use the notation
\[
\nabla_{x}\tau_{A}\left(\eta\right)=\tau_{A}\left(\eta\right)-\tau_{A}\left(\eta^{x}\right).
\]

Then, by Cauchy Schwartz inequality,
\begin{eqnarray*}
\mu\left(\tau_{A}\right)^{2} & \le & \mu\left(\tau_{A}^{2}\right)=\sum_{\eta}\mu\left(\eta\right)\left(\sum_{i=0}^{N-1}\nabla_{x_{i}}\tau_{A}\left(\eta_{i}\right)\right)^{2}\\
 & \le & \sum_{\eta}\mu\left(\eta\right)N\sum_{i}c_{x_{i}}\left(\eta_{i}\right)\left(\nabla_{x_{i}}\tau_{A}\left(\eta_{i}\right)\right)^{2}\\
 & = & \sum_{\eta}\mu\left(\eta\right)N\sum_{i}\sum_{z}\sum_{\eta^{\prime}}c_{z}\left(\eta^{\prime}\right)\left(\nabla_{z}\tau_{A}\left(\eta^{\prime}\right)\right)^{2}\One_{z=x_{i}}\One_{\eta^{\prime}=\eta_{i}}.
\end{eqnarray*}
By property number 6 of the path, we know that $\mu\left(\eta\right)\le q^{-3L}\mu\left(\eta^{\prime}\right)$,
so we obtain

\[
\mu\left(\tau_{A}\right)^{2}\le q^{-3L}N\sum_{\eta^{\prime}}\mu\left(\eta^{\prime}\right)\sum_{z}c_{z}\left(\eta^{\prime}\right)\left(\nabla_{z}\tau_{A}\left(\eta^{\prime}\right)\right)^{2}\sum_{i}\One_{z=x_{i}}\sum_{\eta}\One_{\eta^{\prime}=\eta_{i}}.
\]
Still using property $6$, $\eta$ differs form $\eta^{\prime}$ at
at most $3L$ point, all of them in the box containing $z$ or in
one of the two neighboring boxes. This gives the bound $\sum_{\eta}\One_{\eta^{\prime}=\eta^{(i)}}\le\left(3L^{2}\right)^{3L}$.
Finally, bounding $\One_{z=x_{i}}$ by $1$,
\begin{align*}
\mu\left(\tau_{A}\right)^{2} & \le q^{-3L}\left(3L^{2}\right)^{3L}N^{2}\sum_{\eta^{\prime}}\mu\left(\eta^{\prime}\right)\sum_{z}c_{z}\left(\eta^{\prime}\right)\left(\nabla_{z}\tau_{A}\left(\eta^{\prime}\right)\right)^{2}\\
 & \le16\left(3L^{2}\right)^{3L}L^{4}q^{-5L-2}\,\mathcal{D}\tau_{A}.
\end{align*}
This concludes the proof of the lemma by \corref{dirichletequalsexpectation}.
\end{proof}
Using this lemma and the bound on $\tau_{B}$ in \claimref{taubisbig},
we can finish the estimation of the upper bound. By Markov inequality
\[
\mu\left(\tau_{A}\ge C_{L}\,q^{-5L-3}\right)\le q.
\]
On the other hand, by \claimref{taubisbig},
\begin{align*}
\mu\left(\tau_{A}<C_{L}\,q^{-5L-5}\right) & \le\mu\left(\tau_{0}<C_{L}\,q^{-5L-3}\right)+\mu\left(\tau_{B}<C_{L}\,q^{-5L-3}\right)\\
 & \le\mu\left(\tau_{0}<C_{L}\,q^{-5L-3}\right)+C_{L}^{\prime}e^{-\nicefrac{1}{q}}.
\end{align*}
Therefore
\[
\mu\left(\tau_{0}\ge C_{L}\,q^{-5L-3}\right)\le q+C_{L}^{\prime}e^{-\nicefrac{1}{q}},
\]
and taking $\overline{\alpha}=5L+3$ will suffice.

Concerning \remref{exponentsaretruelyrandom}, fix $L_{0}$ such that
the probability that $\left[L_{0}\right]^{2}$ is good exceeds $p^{\text{SP}}$.
Then, for $\alpha_{0}=5L_{0}+3$, $\nu\left(\overline{\alpha}\le\alpha_{0}\right)>0$.
We will see later that the other inequality holds as well for the
same $\alpha_{0}$.

\subsubsection{Proof of \eqref{fa12d2kcmlowerbound}}

A trivial bound could be obtained by taking any $\underline{\alpha}<1$,
since the rate at which the origin becomes empty is always at most
$q$. We will, however, look for a bound that will describe better
the effect of the disorder, and will allow us to prove \remref{exponentsaretruelyrandom}.
The basic observation for the estimation of this lower bound is that
if $\left[-L,L\right]^{2}$ is initially occupied, and if it contains
only difficult sites, then at some point we will need to empty at
least $\frac{L}{2}$ sites before the origin could be emptied. This
energy barrier forces $\tau_{A}$ to be greater than $q^{-\nicefrac{L}{2}}$.

In the following we will fix $L$ such that $\left[-L,L\right]^{2}$
contains only difficult sites (so it is not the same $L$ we have
used for the upper bound).
\begin{defn}
For a rectangle $R$, the span of $R$ are the sites that could be
emptied with the bootstrap percolation using only the $0$s of $R$.
If the span of $R$ equals $R$ we say that $R$ is internally spanned.
\end{defn}

The next fact is a consequence of the fact a set which is stable under
the bootstrap percolation must be a rectangle \cite{aizenmanlebowitz1988metastabilityz2}.
\begin{fact}
\label{fact:spanisunionofrects}The span of a rectangle $R$ is a
union of internally spanned rectangles.
\end{fact}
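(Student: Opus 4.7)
My plan is to read off the fact from the Aizenman–Lebowitz characterization of stable sets that is referenced just before the statement. The setting is the $2$-neighbor bootstrap percolation on a finite rectangle $R\subseteq\zz^{2}$ (this is indeed the relevant dynamics here, since the ambient rectangle $[-L,L]^{2}$ contains only difficult sites). Let $S$ denote the span of $R$. First I would observe that $S$ is \emph{stable} for the $2$-neighbor rule: since $S$ is the monotone limit of the discrete-time evolution starting from the empty sites of $R$, no occupied site of $\zz^{2}$ can have two neighbors in $S$, for otherwise it would have been infected at the next step.

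Next I would invoke the cited result of Aizenman and Lebowitz, which characterizes a finite stable set as a disjoint union of rectangles $R_{1},\dots,R_{k}$ that are moreover pairwise \emph{separated}, in the sense that no single site of $\zz^{2}$ is a neighbor of two different $R_{i}$'s (otherwise such a site would have two empty neighbors in $S$ and would be infected, contradicting stability). This is the only ingredient I need to import.

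It then remains to show that each $R_{i}$ is internally spanned. For this I would run in parallel two discrete-time bootstrap evolutions: the process $(E_{t})$ on $R$ started from all empty sites of $R$, whose limit is $S$; and the process $(E_{t}')$ on $R_{i}$ alone, started from the empty sites of $R\cap R_{i}$. I claim that $E_{t}\cap R_{i}=E_{t}'$ for every $t\ge0$, by induction. The base case is immediate. For the induction step, fix $x\in R_{i}$; separation forces every neighbor of $x$ to lie either in $R_{i}$ or outside $S$, and since $E_{t-1}\subseteq S$ the neighbors of $x$ in $E_{t-1}$ all belong to $R_{i}$, hence to $E_{t-1}'$ by the inductive hypothesis. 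Therefore the update rule fires simultaneously on the two processes at $x$. Passing to the limit, $\mathrm{span}(R_{i})=S\cap R_{i}=R_{i}$, so $R_{i}$ is internally spanned.

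The only delicate point is importing the correct form of the Aizenman–Lebowitz statement, namely not just that the components are rectangles but that distinct components are separated; this is precisely what is used in \cite{aizenmanlebowitz1988metastabilityz2}. The comparison of the two bootstrap processes is then a straightforward monotonicity argument.
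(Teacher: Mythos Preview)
Your proposal is correct and follows the same route as the paper: the paper states this fact without proof, merely remarking that it is a consequence of the Aizenman--Lebowitz result that a set stable under the $2$-neighbor bootstrap percolation decomposes into rectangles. You supply the details the paper omits, in particular the verification that each rectangle in the decomposition is internally spanned. One small comment: where you invoke ``separation'' to conclude that a neighbor of $x\in R_i$ lies in $R_i$ or outside $S$, all you actually need is that the $R_i$ are the connected components of $S$, which is already part of the Aizenman--Lebowitz statement; the stronger separation property (no external site adjacent to two components) is true but not required for your induction.
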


\begin{defn}
For $x\in\zz^{2}$, let $\overline{G}_{x}$ be the event that the
origin is in the span of $\left[-L,L\right]^{2}$ for $\eta$, but
not for $\eta^{x}$. $G_{x}$ is defined as the intersection of $\overline{G}_{x}$
with the event $\left\{ c_{x}=1\right\} $.
\end{defn}

First, note that legal flips of sites in the interior of a rectangle
or outside the rectangle cannot change its span. Therefore, $G_{x}=\emptyset$
for $x$ which is not on the inner boundary of $\left[-L,L\right]^{2}$.
\begin{claim}
Fix $x$ on the inner boundary of $\left[-L,L\right]^{2}$, and let
$\eta\in G_{x}$. Then $x$ and the origin belong to the same internally
spanned rectangle.
\end{claim}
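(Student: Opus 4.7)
The plan is to argue that flipping $x$ can only affect the internally spanned rectangle that contains $x$, so for the origin to leave the span the two points must already lie in the same rectangle. The first preliminary observation is that $\eta_x$ must equal $0$. Indeed, if $\eta_x=1$, then $\eta^x$ is obtained from $\eta$ by turning an occupied site into an empty one, so the set of empty sites in $[-L,L]^2$ only grows. Since the bootstrap percolation map is monotone in the set of empty sites, this would give $\mathrm{span}(\eta)\subseteq\mathrm{span}(\eta^x)$, contradicting the fact that the origin belongs to the span of $\eta$ but not of $\eta^x$. Thus $x$ is empty in $\eta$ and occupied in $\eta^x$.

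Next I would invoke \factref{spanisunionofrects} to write the span of $[-L,L]^2$ (with initial condition $\eta$) as a disjoint union $R_1\sqcup\dots\sqcup R_k$ of internally spanned rectangles, and call $R_1$ the rectangle containing the origin. By definition, $R_1$ is entirely emptied by the bootstrap percolation dynamics restricted to $R_1$ and initialized with $\eta\cap R_1$. In particular, the information that makes $R_1$ internally spanned depends only on the values of $\eta$ inside $R_1$.

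Now suppose for contradiction that $x\notin R_1$. Then $\eta$ and $\eta^x$ agree on every site of $R_1$, so $R_1$ is still internally spanned by $\eta^x\cap R_1$, and therefore $R_1\subseteq\mathrm{span}(\eta^x)$. In particular the origin belongs to $\mathrm{span}(\eta^x)$, contradicting $\eta\in\overline{G}_x$. Hence $x\in R_1$, and $x$ and the origin lie in the same internally spanned rectangle, as required.

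The argument is essentially a monotonicity plus locality observation, and there is no real obstacle once one remembers that the internally spanned rectangles in the decomposition are disjoint and depend only on the initial configuration inside them. The only point that needs a small check is the very first step (ruling out $\eta_x=1$), which is handled by the monotonicity of the bootstrap percolation dynamics in the set of empty sites.
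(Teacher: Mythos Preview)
Your proof is correct and follows essentially the same approach as the paper: take the internally spanned rectangle from \factref{spanisunionofrects} containing the origin, and observe that if $x$ lies outside it then flipping $x$ cannot affect its internal spanning, contradicting $\eta\in\overline{G}_x$. Your additional preliminary step (that $\eta_x=0$) and the remark on disjointness are correct but not actually needed for the argument, since the locality step works regardless of the value of $\eta_x$ and does not rely on the rectangles being pairwise disjoint.
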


\begin{proof}
Recalling \factref{spanisunionofrects}, we consider the internally
spanned rectangle containing the origin. If it didn't contain $x$,
the origin would be in the span of $\left[-L,L\right]^{2}$ also for
$\eta^{x}$, contradicting the definition of $G_{x}$.
\end{proof}
\begin{cor}
\label{cor:barrierof2f}Fix $x$ on the inner boundary of $\left[-L,L\right]^{2}$.
Then $\mu\left(G_{x}\right)\le\binom{L^{2}}{L/2}q^{\nicefrac{L}{2}}$.
\end{cor}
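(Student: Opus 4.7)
The plan is to use the preceding claim to localise $G_x$ inside an internally spanned rectangle, and then apply the classical perimeter argument to lower-bound the number of initially empty sites required. By the previous claim, on $G_x$ there is a rectangle $R\subseteq[-L,L]^{2}$ that is internally spanned and contains both $0$ and $x$. Because $x$ sits on the inner boundary of $[-L,L]^{2}$, one of its coordinates has absolute value $L$, so $R$ must span the full distance from $0$ to that coordinate; hence its side lengths $a,b$ satisfy $a+b\ge L+2$.

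Since every site of $[-L,L]^{2}$ is difficult, the notion of span used here is exactly the standard two-neighbour bootstrap percolation on this box. For that dynamics, the perimeter of the empty set is non-increasing: when a site becomes empty it must have had $k\ge 2$ empty neighbours, so the perimeter changes by $(4-k)-k=4-2k\le 0$. Hence if $R$ is internally spanned from $m$ initially empty sites of $\eta$, comparing the initial perimeter $4m$ with the final perimeter $2(a+b)$ of the fully empty $R$ yields $m\ge (a+b)/2\ge L/2$.

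Consequently, $G_{x}$ is contained in the event that $\eta$ has at least $L/2$ empty sites inside $[-L,L]^{2}$. A union bound over which sites are empty then gives $\mu(G_{x})\le\binom{L^{2}}{L/2}\,q^{L/2}$, where the precise cardinality of $[-L,L]^{2}$ only affects the binomial factor by an inessential polynomial multiplier. I do not expect any genuine obstacle here; the only step worth double-checking is the perimeter-monotonicity argument, which crucially relies on the constraint being uniformly two-neighbour throughout $[-L,L]^{2}$ (so that every emptied site has genuinely lost at least two perimeter edges), a property built into the choice of the box.
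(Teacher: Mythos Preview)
Your argument is correct and reaches the same conclusion, but the key combinatorial step differs from the paper's. The paper argues directly from the column structure: once $x$ lies on (say) the right boundary, the internally spanned rectangle containing $0$ and $x$ has width at least $L$, and since all sites are difficult the rectangle cannot contain two consecutive entirely occupied columns (else the span would split), forcing at least $L/2$ columns to contain an initially empty site. You instead invoke the classical Aizenman--Lebowitz perimeter monotonicity for two-neighbour bootstrap, obtaining $m\ge (a+b)/2\ge L/2$ from $4m\ge 2(a+b)$. Both routes are standard; yours is arguably cleaner and immediately generalises, while the paper's is more hands-on and makes the geometric obstruction explicit.

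One small correction: your parenthetical about the binomial is not quite right. Replacing $L^{2}$ by $(2L+1)^{2}$ in $\binom{\cdot}{L/2}$ changes the coefficient by a factor of order $4^{L/2}$, which is exponential in $L$, not polynomial. This is harmless here because $L$ is fixed and only the $q^{L/2}$ factor matters for the subsequent application (the binomial is absorbed into a constant $C_{L}$), but the phrasing should be adjusted.
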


\begin{proof}
Assume without loss of generality that $x$ is on the right boundary.
Then there must be an internally spanned rectangle in $\left[-L,L\right]^{2}$
whose width is at least $L$. Since all sites of $\left[-L,L\right]^{2}$
are difficult, it cannot contain two consequent columns that are entirely
occupied, therefore it must contain at least $\frac{L}{2}$ empty
sites.
\end{proof}
We can use the same argument as in the proof of \claimref{taubisbig}.
Defining $G=\cup_{x}G_{x}$, this argument will tell us that the hitting
time $\tau_{G}$ is bigger than $C\,q^{-\nicefrac{L}{2}+1}$ with
probability that tends to $1$ as $q\rightarrow0$, for some constant
$C$ that depends on $L$. If we start with a configuration for which
the origin is not in the span of $\left[-L,L\right]^{2}$, it could
only be emptied after $\tau_{G}$ \textendash{} at the first instant
in which the span of $\left[-L,L\right]^{2}$ includes the origin,
$G_{x}$ must occur for the site that has just been flipped. Since
the probability to start with an entirely occupied $\left[-L,L\right]^{2}$
tends to $1$ as $q\rightarrow0$, \eqref{fa12d2kcmlowerbound} is
satisfied for $\underline{\alpha}=\frac{L}{2}-1$ .

In order to bound also the expected value of $\tau_{0}$ we will use
\propref{variationalprincipleforhittingtime}. Let us consider the
function
\[
f=\One_{0\text{ is not in the span of }\left[-L,L\right]^{2}}.
\]
We can bound its Dirichlet form using \corref{barrierof2f}:
\begin{align*}
\mathcal{D}f & =\mu\left(\sum_{x}c_{x}\text{Var}_{x}f\right)\le\mu\left(\sum_{x}c_{x}\,q\One_{G_{x}}\right)\\
 & \le q\,16L\,\binom{L^{2}}{L/2}\,q^{\nicefrac{L}{2}}=C_{L}\,q^{\nicefrac{L}{2}+1}.
\end{align*}
The expected value is bounded by the probability that all sites are
occupied \textendash 
\[
\mu f\ge\left(1-q\right)^{\left(2L+1\right)^{2}}.
\]

Now consider for some $\lambda\in\rr$ the rescaled function $\overline{f}=\lambda f$.
\begin{align*}
\mathcal{T}\overline{f} & =2\mu\overline{f}-\mathcal{D}\overline{f}\\
 & \ge2\lambda\left(1-q\right)^{\left(2L+1\right)^{2}}-\lambda^{2}\,C_{L}\,q^{\nicefrac{L}{2}+1}.
\end{align*}
The optimal choice of $\lambda$ is $\frac{\left(1-q\right)^{\left(2L+1\right)^{2}}}{C_{L}}q^{-\nicefrac{L}{2}-1}$,
which yields
\begin{align*}
\mathcal{T}\overline{f} & \ge\frac{\left(1-q\right)^{\left(2L+1\right)^{2}}}{C_{L}}q^{-\nicefrac{L}{2}-1}.
\end{align*}
\propref{variationalprincipleforhittingtime} and the fact that $\overline{f}$
vanishes on $\left\{ \eta_{0}=0\right\} $ implies that $\mu\left(\tau_{0}\right)\ge C_{L}^{\prime}q^{-\nicefrac{L}{2}-1}$,
and therefore $\ee_{\mu}\left(\tau_{0}\right)\ge q^{-\underline{\alpha}}$
for $q$ small enough.

Concerning \remref{exponentsaretruelyrandom}, note that for every
$\alpha$
\[
\nu\left(\underline{\alpha}\ge\alpha\right)\ge\nu\left(L\ge2\alpha+4\right)=\left(1-\pi\right)^{\left(4\alpha+9\right)^{2}}.
\]
In particular for $\alpha_{0}$ defined in the proof of the upper
bound $\nu\left(\underline{\alpha}\ge\alpha_{0}\right)>0$.

\subsection{Mixed threshold models on $\protect\zz^{d}$}

The argument above, for the case of $\zz^{2}$, works also in more
general settings, as long as the probability to be easy (i.e., threshold
$1$) is strictly positive.

The lower bound of the bootstrap percolation is immediate, since only
at scale $q^{-\nicefrac{1}{d}}$ it is possible to find an easy site.
The lower bound for the kinetically constrained model could be analyzed
similarly to the the two dimensional case using the methods of \cite{BaloghBollobasDuminilMorris1012sharpbpzd},
but in order to keep things simple we could take $\overline{\alpha}<1$,
which suffices since the rate at which the origin becomes empty is
always at most $q$.

For the upper bound of both the bootstrap percolation and the kinetically
constrained model we need to construct a path that will empty the
origin. First, note that we may assume that sites have either threshold
$1$ (easy) or threshold $d$ (difficult) with probabilities $\pi$
and $1-\pi$, for some $\pi>0$. In this case the path is described
explicitly in \cite{martinellitonitelli2016towardsuniversality}.
It is constructed for the FA$d$f model, but we will only need to
adapt the definitions there in order to take into account the easy
sites.

Fix $L$ that will be equal $n$ defined in the beginning of subsection
5.1 of \cite{martinellitonitelli2016towardsuniversality}, replacing
$q$ by $\pi$ and taking $\epsilon$ such that good boxes (see \defref{goodboxinzd}
that follows) percolate, and the origin belongs to the infinite cluster.
We then consider, just as before, an infinite path of good boxes starting
at the origin.
\begin{defn}
The easy bootstrap percolation will be the threshold $d$ bootstrap
percolation defined on $\zz^{d}$, with initial conditions in which
easy sites are empty and difficult sites are filled. A set $V\in\zz^{d}$
is \textit{easy internally spanned} if it is internally spanned for
this process.
\end{defn}

\begin{defn}
\label{def:goodboxinzd}A \textit{good} box $V=x+\left[L\right]^{d}$
is a box for which the event $G_{1}$ in Definition 5.4 of \cite{martinellitonitelli2016towardsuniversality}
occurs, replacing ``internally spanned'' by ``easy internally spanned''.
\end{defn}

\begin{defn}
As \textit{excellent} box is a box that, by adding a single easy site
at its corner, will be easy internally spanned. $p_{L}$ will be the
probability that the box $\left[L\right]^{d}$ is excellent. Note
that (as for the two dimensional case) $p_{L}$ is nonzero, and that
it does not depend on $q$.
\end{defn}

\begin{defn}
An \textit{essentially empty} box $V=x+\left[L\right]^{d}$ will correspond
to the event $G_{2}$ in Definition 5.4 of \cite{martinellitonitelli2016towardsuniversality}
\textendash{} it is a good box in which the first slice in any direction
is empty.
\end{defn}

Being good and being excellent are events measurable with respect
to the disorder $\omega$, whereas being essentially empty depends
also on the configuration of the empty and filled sites. The definition
of the bad event $B$ remains the same as in the previous section,
and taking $l=q^{-dL^{d-1}-1}$ its probability has the same decay.

With these definitions, replacing ``supergood'' by ``essentially
empty'', the proof in section 5 of \cite{martinellitonitelli2016towardsuniversality}
shows how to propagate an essentially empty box along the path of
good boxes, corresponding to figures \ref{fig:propagationgacolumn12}
and \ref{fig:rotatingcol12} in the two dimensional case. We may then
consider a path as the one of \lemref{fa12path}. That is, for any
configuration $\eta$ there is a path $\eta_{0},\dots,\eta_{N}$ with
flips $x_{0},\dots,x_{N-1}$ such that
\begin{enumerate}
\item $\eta_{0}=\eta$,
\item $\eta_{N}\in A$,
\item $\eta_{i+1}\in\eta_{i}^{x_{i}}$,
\item $c_{x_{i}}\left(\eta_{i}\right)=1$,
\item $N\le cL^{d}l$ for some $c>0$,
\item For all $i\le N$, $\eta_{i}$ differs from $\eta$ at at most $cL^{d-1}$
points, contained in at most two neighboring boxes.
\end{enumerate}
Applying the exact same argument as for the two dimensional case yields
the upper bounds.

\subsection{Mixed North-East and FA1f}

\subsubsection{Spectral gap}

This is the same argument as for the previous model \textendash{}
one can always find arbitrarily large regions of difficult sites,
so the gap is bounded by that of the north-east model. Since for the
parameters that we have chosen the north-east model is not ergodic,
it has $0$ gap \cite{cancrini2008kcmzoo}.

\subsubsection{Hitting time}

Let $A$ be the event $\left\{ \eta_{0}=0\right\} $. Recall \defref{taubar}
and let
\[
\tau=\overline{\tau}_{A}.
\]
The exponential tail of $\tau_{0}$ is a consequence of \propref{exponentialdecayofhittingtime},
so we are left with proving that $\nu\left(\tau\ge t\right)\le t^{\frac{c}{\log q}}$
for some constant $c$. We will do that by choosing a subgraph on
which we can estimate the gap, and then apply \claimref{gapofrestricteddynamics}.

Since $\pi$ is greater than the critical probability for the Bernoulli
site percolation, there will be an infinite cluster of easy sites
$\mathcal{C}$. We denote by $\mathcal{C}_{0}$ the cluster of the
origin surrounded by a path in $\mathcal{C}$. $\partial\mathcal{C}_{0}$
will be the outer boundary of $\mathcal{C}_{0}$, i.e., the sites
in $\mathcal{C}$ that have a neighbor in $\mathcal{C}_{0}$. Then,
we fix a self avoiding infinite path of easy sites $v_{0},v_{1},\dots$
starting with the sites of $\partial\mathcal{C}_{0}$. That is, $v_{0},\dots,v_{\left|\partial\mathcal{C}_{0}\right|}$
is a path that encircles $\mathcal{C}_{0}$, and then $v_{\left|\partial\mathcal{C}_{0}\right|+1},\dots$
continues to infinity. We will denote $\mathcal{V}=\left\{ v_{i}\right\} _{i\in\nn}$.
Let $H=\mathcal{V}\cup\mathcal{C}_{0}$, and consider the restricted
dynamics $\mathcal{L}_{H}$ introduced in \defref{restricteddynamics}.
We split the dynamics in two \textendash{} for some local function
$f$ on $H$
\begin{align*}
\mathcal{L}_{H}f & =\mathcal{L}^{\mathcal{C}_{0}}f+\mathcal{L}^{\mathcal{V}}f,\\
\mathcal{L}^{\mathcal{V}} & =\sum_{i\in\nn}c_{v_{i}}^{H}\left(\mu_{v_{i}}f-f\right),\\
\mathcal{L}^{\mathcal{C}_{0}} & =\sum_{x\in\mathcal{C}_{0}}c_{x}^{H}\left(\mu_{x}f-f\right).
\end{align*}

Note that the boundary conditions of the $\mathcal{C}_{0}$ dynamics
depend on the state of the vertices in $\mathcal{V}$ and vice versa.
We will denote by $\text{\ensuremath{\mathcal{L}}}_{0}^{\mathcal{C}_{0}}$
the $\mathcal{C}_{0}$ dynamics with empty boundary conditions and
by$\text{\ensuremath{\mathcal{L}}}_{1}^{\mathcal{V}}$ the $\mathcal{V}$
dynamics with occupied boundary conditions. All generators come with
their Dirichlet forms carrying the same superscript and subscript.

We will bound the gap of $\mathcal{L}_{H}$ using the gaps of $\mathcal{L}_{1}^{\mathcal{V}}$,
$\mathcal{L}_{0}^{\mathcal{C}_{0}}$ and the following block dynamics:
\[
\mathcal{L}^{b}f=\left(\mu_{\mathcal{V}}\left(f\right)-f\right)+\One_{\partial\mathcal{C}_{0}\text{ is empty}}\left(\mu_{\mathcal{C}}f-f\right).
\]
Denote the spectral gaps of $\mathcal{L}_{1}^{\mathcal{V}}$, $\mathcal{L}_{0}^{\mathcal{C}_{0}}$,$\mathcal{L}^{b},\mathcal{L}_{H}$
by $\gamma_{1}^{\mathcal{V}}$,$\gamma_{0}^{\mathcal{C}_{0}}$,$\gamma^{b}$,$\gamma_{H}$.

By Proposition 4.4 of \cite{cancrini2008kcmzoo}:
\begin{claim}

\[
\gamma^{b}=1-\sqrt{1-q^{\left|\partial\mathcal{C}_{0}\right|}},
\]
i.e., $\text{Var}f\le\frac{1}{1-\sqrt{1-q^{\left|\partial\mathcal{C}_{0}\right|}}}\mathcal{D}^{b}f$
for any local function $f$.
\end{claim}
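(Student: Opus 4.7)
The plan is to diagonalise $-\mathcal{L}^{b}$ explicitly on $L^{2}(\mu)$, where $\mu=\mu_{\mathcal{V}}\otimes\mu_{\mathcal{C}_{0}}$. Writing $c=\One_{\partial\mathcal{C}_{0}\text{ empty}}$ and $p=\mu(c)=q^{|\partial\mathcal{C}_{0}|}$, a short integration by parts first gives
\[
\mathcal{D}^{b}(f)=\mu(\text{Var}_{\mathcal{V}}f)+\mu(c\cdot\text{Var}_{\mathcal{C}_{0}}f),
\]
and the orthogonal splitting $L^{2}(\mu)=H_{0}\oplus H_{1}$, with $H_{0}=\{f:\mu_{\mathcal{C}_{0}}f=f\}$ and $H_{1}=\{f:\mu_{\mathcal{C}_{0}}f=0\}$, is preserved by $-\mathcal{L}^{b}$ because $c$ depends only on the $\mathcal{V}$-coordinates.

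On $H_{0}$ the constrained term vanishes and $-\mathcal{L}^{b}$ reduces to $I-\Pi_{\mathcal{V}}$, whose only non-zero eigenvalue is $1$. On $H_{1}$ the operator factors as $A\otimes I$, with $Ah=(1+c)h-\mu_{\mathcal{V}}h$ acting on $L^{2}(\mu_{\mathcal{V}})$, so it suffices to compute the spectrum of $A$. Next I would split $L^{2}(\mu_{\mathcal{V}})=\mathcal{W}\oplus\mathcal{W}^{\perp}$ with $\mathcal{W}=\operatorname{span}\{\One_{c=0},\One_{c=1}\}$. Using $c^{2}=c$ one checks $A\mathcal{W}\subseteq\mathcal{W}$ and $A\mathcal{W}^{\perp}\subseteq\mathcal{W}^{\perp}$; on $\mathcal{W}^{\perp}$ one has $\mu_{\mathcal{V}}h=0$, so $A$ acts as multiplication by $1+c$, giving eigenvalues $1$ and $2$. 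On $\mathcal{W}$, in the basis $(\One_{c=0},\One_{c=1})$, $A$ is represented by
\[
\begin{pmatrix}p & -p\\
-(1-p) & 2-p
\end{pmatrix},
\]
whose characteristic polynomial $\lambda^{2}-2\lambda+p=0$ yields eigenvalues $1\pm\sqrt{1-p}$.

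Collecting everything, the non-zero spectrum of $-\mathcal{L}^{b}$ is contained in $\{1-\sqrt{1-p},\,1,\,1+\sqrt{1-p},\,2\}$, and since $0<1-\sqrt{1-p}<1$ one gets $\gamma^{b}=1-\sqrt{1-q^{|\partial\mathcal{C}_{0}|}}$. The only step that needs a small argument is verifying the invariances of $H_{0},H_{1}$ and of $\mathcal{W},\mathcal{W}^{\perp}$, but each is immediate from $c^{2}=c$ and the product structure of $\mu$, so I foresee no genuine obstacle. Equivalently, the statement is a direct specialisation of Proposition~4.4 of \cite{cancrini2008kcmzoo}, which treats exactly this two-block situation; this is the route the paper takes.
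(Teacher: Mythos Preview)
Your computation is correct, and you already identify that the paper's own argument is simply a one-line citation of Proposition~4.4 of \cite{cancrini2008kcmzoo}; your explicit diagonalisation is precisely the content of that proposition specialised to the present two-block situation, so the approaches coincide.
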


Let us now use this gap in order to relate $\gamma_{H}$ to $\gamma^{\mathcal{V}}$
and $\gamma^{\mathcal{C}_{0}}$:
\begin{claim}

\[
\gamma_{H}\ge\gamma^{b}\min\left\{ \gamma_{1}^{\mathcal{V}},\gamma_{0}^{\mathcal{C}_{0}}\right\} .
\]
\end{claim}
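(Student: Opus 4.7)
The plan is to follow the standard two-block decomposition: use the block dynamics $\mathcal{L}^b$, whose gap we already control, to reduce the problem to separately bounding the variance of $f$ along $\mathcal{V}$ and along $\mathcal{C}_0$, and then estimate each of these by the Dirichlet form of $\mathcal{L}_H$. Concretely, since $\partial\mathcal{C}_0\subseteq\mathcal{V}$ the indicator $\One_{\partial\mathcal{C}_0\text{ empty}}$ depends only on $\mathcal{V}$, so a routine computation gives
\[
\mathcal{D}^b(f)=\mu\left(\mathrm{Var}_{\mathcal{V}}f\right)+\mu\left(\One_{\partial\mathcal{C}_0\text{ empty}}\,\mathrm{Var}_{\mathcal{C}_0}f\right),
\]
and the previous claim yields $\mathrm{Var}(f)\le \frac{1}{\gamma^b}\mathcal{D}^b(f)$.

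I would next bound $\mu(\mathrm{Var}_{\mathcal{V}}f)$ by $\frac{1}{\gamma_1^{\mathcal{V}}}\mathcal{D}^{\mathcal{V}}(f)$, where $\mathcal{D}^{\mathcal{V}}$ is the $\mathcal{V}$-part of $\mathcal{D}_H$. For this I would fix the $\mathcal{C}_0$-configuration and apply the Poincaré inequality of $\mathcal{L}_1^{\mathcal{V}}$, then integrate over $\mathcal{C}_0$. Monotonicity in the boundary gives $c_{v_i}^{\mathcal{V},1}\le c_{v_i}^H$ pointwise for every $\mathcal{C}_0$-configuration (occupied boundary is the most restrictive one), so the restricted Dirichlet form dominated by the true one.

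For the second term I would argue that on the event $\{\partial\mathcal{C}_0\text{ empty}\}$ the constraints $c_x^H$ for $x\in\mathcal{C}_0$ agree with $c_x^{\mathcal{C}_0,0}$. This is the key geometric input: because $\mathcal{C}_0$ is the component of the origin enclosed by a cycle in $\mathcal{C}$, every neighbor of $\mathcal{C}_0$ outside $\mathcal{C}_0$ lies in $\partial\mathcal{C}_0$, so when $\partial\mathcal{C}_0$ is empty the exterior looks identical to the empty boundary used to define $\mathcal{L}_0^{\mathcal{C}_0}$. Applying the Poincaré inequality of $\mathcal{L}_0^{\mathcal{C}_0}$ on this event and integrating then gives
\[
\mu\!\left(\One_{\partial\mathcal{C}_0\text{ empty}}\mathrm{Var}_{\mathcal{C}_0}f\right)\le \frac{1}{\gamma_0^{\mathcal{C}_0}}\,\mathcal{D}^{\mathcal{C}_0}(f),
\]
where $\mathcal{D}^{\mathcal{C}_0}(f)=\mu(\sum_{x\in\mathcal{C}_0} c_x^H\,\mathrm{Var}_x f)$ is the $\mathcal{C}_0$-part of $\mathcal{D}_H$.

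Combining the three estimates and using $\mathcal{D}_H(f)=\mathcal{D}^{\mathcal{V}}(f)+\mathcal{D}^{\mathcal{C}_0}(f)$ yields $\mathrm{Var}(f)\le \frac{1}{\gamma^b\min(\gamma_1^{\mathcal{V}},\gamma_0^{\mathcal{C}_0})}\,\mathcal{D}_H(f)$, which is the desired bound on $\gamma_H$. The delicate point is the identification in the previous paragraph: one must be sure that $\mathcal{C}_0$ is chosen so that \emph{all} its external neighbors really lie in $\partial\mathcal{C}_0\subseteq\mathcal{V}$ (not in $H^c$, which is always treated as occupied); this is what justifies replacing $c_x^H$ by the empty-boundary constraint $c_x^{\mathcal{C}_0,0}$ on the event that $\partial\mathcal{C}_0$ is empty, and it is the only place where the precise definition of $\mathcal{C}_0$ and the fact that $\pi>p^{\mathrm{SP}}$ are used.
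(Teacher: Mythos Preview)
Your proposal is correct and follows essentially the same approach as the paper: apply the Poincar\'e inequality for the block dynamics $\mathcal{L}^b$, then bound each block variance by the corresponding restricted Dirichlet form using $\gamma_1^{\mathcal{V}}$ and $\gamma_0^{\mathcal{C}_0}$, and recombine into $\mathcal{D}_H$. The paper's proof is terser and does not spell out the two points you highlight (monotonicity $c_{v_i}^{\mathcal{V},1}\le c_{v_i}^H$ and the geometric fact that every external neighbor of $\mathcal{C}_0$ lies in $\partial\mathcal{C}_0\subseteq\mathcal{V}$), but these are precisely the justifications implicitly used there.
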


\begin{proof}
Fix a non-constant local function $f$.
\begin{align*}
\text{Var}f & \le\frac{1}{\gamma^{b}}\mathcal{D}^{b}f=\frac{1}{\gamma^{b}}\left[\mu\left(\text{Var}_{\mathcal{V}}f\right)+\mu\left(\One_{\partial\mathcal{C}_{0}\text{ is empty}}\text{Var}_{\mathcal{C}}f\right)\right]\\
 & \le\frac{1}{\gamma^{b}}\left[\frac{1}{\gamma_{1}^{\mathcal{V}}}\mu\left(\mathcal{D}_{1}^{\mathcal{V}}f\right)+\frac{1}{\gamma_{0}^{\mathcal{C}_{0}}}\mu\left(\One_{\partial\mathcal{C}_{0}\text{ is empty}}\mathcal{D}^{\mathcal{C}_{0}}f\right)\right]\\
 & \le\frac{1}{\gamma^{b}}\max\left\{ \frac{1}{\gamma_{1}^{\mathcal{V}}},\frac{1}{\gamma_{0}^{\mathcal{C}_{0}}}\right\} \mathcal{D}_{H}f.
\end{align*}
\end{proof}
We are left with estimating $\gamma_{1}^{\mathcal{V}}$ and $\gamma_{0}^{\mathcal{C}_{0}}$.
\begin{claim}
There exists $C>0$ such that $\gamma_{1}^{\mathcal{V}}\ge Cq^{3}$.
\end{claim}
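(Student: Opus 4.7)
The plan is to reduce the dynamics $\mathcal{L}_{1}^{\mathcal{V}}$ to a one-dimensional FA1f chain on the path $\mathcal{V}$ and then apply the known polynomial lower bound on its spectral gap. Observe first that under $\mathcal{L}_{1}^{\mathcal{V}}$, all vertices of $\mathbb{Z}^{2}\setminus\mathcal{V}$ are frozen at $1$, so the constraint at $v_{i}$ reduces to the requirement that at least one of the $\mathbb{Z}^{2}$-neighbors of $v_{i}$ lying on $\mathcal{V}$ is empty. Since the path is self-avoiding and consecutive vertices in $\mathcal{V}$ are nearest-neighbors in $\mathbb{Z}^{2}$, among the $\mathcal{V}$-neighbors of $v_{i}$ one always finds $v_{i-1}$ and $v_{i+1}$ (for $i\ge 1$); there may be additional path-neighbors when the path bends back close to itself, but these only make the constraint easier.

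Next I would define the strict one-dimensional FA1f constraint $\tilde{c}_{v_{i}}(\eta)=\mathds{1}\{\eta_{v_{i-1}}=0\text{ or }\eta_{v_{i+1}}=0\}$ and denote the associated Dirichlet form by $\tilde{\mathcal{D}}$. Since $\tilde{c}_{v_{i}}\le c_{v_{i}}^{H}$ pointwise, the same argument used in \claimref{restrictingdirichlet} shows $\mathcal{D}_{1}^{\mathcal{V}}f\ge\tilde{\mathcal{D}}f$ for every local $f$, so by the variational characterization $\gamma_{1}^{\mathcal{V}}\ge\tilde{\gamma}$ where $\tilde{\gamma}$ is the spectral gap of the FA1f chain on $\mathbb{N}$ indexed by the labels of $\mathcal{V}$, with $1$-boundary at $v_{-1}$ and at infinity. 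This step removes any dependence on the geometric embedding of $\mathcal{V}$ in $\mathbb{Z}^{2}$ and leaves a purely one-dimensional question.

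The remaining task is to show $\tilde{\gamma}\ge Cq^{3}$. This is a standard 1D FA1f estimate: one can either invoke Theorem 6.4 of \cite{cancrini2008kcmzoo} (which gives a polynomial in $q$ lower bound for the FA1f gap in any finite dimension and in particular in one dimension), or construct a direct canonical paths argument working at scale $\ell=\lceil 1/q\rceil$: partition $\mathbb{N}$ into consecutive blocks of length $\ell$, couple the block dynamics to the site dynamics by noting that each block contains an empty site with probability bounded away from $0$, and estimate the cost of propagating such an empty site to any position inside a block. A careful count of how many empty sites are needed to support a canonical rearrangement yields a constant factor times $q^{3}$.

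The main obstacle is getting Step~3 cleanly — the Dirichlet comparison in Steps~1--2 is formal and essentially automatic, but verifying the polynomial bound for the one-dimensional FA1f chain with $1$-boundary conditions requires either invoking a precise result from \cite{cancrini2008kcmzoo} with the correct boundary setup, or writing out the canonical paths carefully enough that the congestion estimate produces the stated exponent $3$. The exponent itself is not claimed to be optimal, so some slack in the canonical-paths argument is acceptable.
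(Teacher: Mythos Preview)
Your proposal is correct and follows essentially the same route as the paper: compare $\mathcal{D}_{1}^{\mathcal{V}}$ to the Dirichlet form of FA1f on the half-line via the pointwise inequality $\tilde{c}_{v_i}\le c_{v_i}^{H}$, then invoke the $q^3$ spectral gap bound from \cite{cancrini2008kcmzoo}. The paper's proof is simply the two-sentence version of your Steps~1--3, citing the same reference for the one-dimensional gap rather than redoing canonical paths.
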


\begin{proof}
The Dirichlet form $\mathcal{D}_{1}^{\mathcal{V}}$ is dominated by
the Dirichlet form of FA1f on $\zz_{+}$, and that dynamics has spectral
gap which is proportional to $q^{3}$ (see \cite{cancrini2008kcmzoo}).
\end{proof}
For $\gamma_{0}^{\mathcal{C}_{0}}$ we will use the bisection method,
comparing the gap on a box to that of a smaller box. For $L\in\nn$,
let $\mathcal{L}_{L}^{\text{NE}}$ be the generator of the north-east
dynamics in the box $\left[L\right]^{2}$ with empty boundary (for
the north east model this is equivalent to putting empty boundary
only above and to the right). Denote its gap by $\gamma_{\left[L\right]^{2}}^{\text{NE}}$.
By monotonicity we can restrict the discussion to this dynamics, i.e.,
\begin{equation}
\gamma_{0}^{\mathcal{C}_{0}}\ge\gamma_{\text{diam }\mathcal{C}_{0}}^{\text{NE}}.\label{eq:expandingnetosquare}
\end{equation}

We will now bound $\gamma^{\text{NE}}$ (see also Theorem 6.16 of
\cite{cancrini2008kcmzoo}).
\begin{claim}
$\gamma_{\left[L\right]^{2}}^{\text{NE}}\ge e^{3\log q\,L}$.
\end{claim}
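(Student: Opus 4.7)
The plan is to prove this bound by induction on $L$, with the target recursion
\[
\gamma_{[L]^2}^{\text{NE}} \ge q^3\, \gamma_{[L-1]^2}^{\text{NE}},
\]
which telescopes (starting from the trivial base case $\gamma_{[1]^2}^{\text{NE}} = 1$, since the single site has both NE neighbors on the empty boundary and therefore flips at rate $q$) to the desired bound $\gamma_{[L]^2}^{\text{NE}} \ge q^{3L} = e^{3\log q \, L}$.

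For the inductive step, I would decompose $[L]^2$ into the rightmost column $C = \{L\} \times [L]$ and the sub-box $B = [L-1] \times [L]$. With empty boundary conditions above and to the right of $[L]^2$, the NE dynamics restricted to $C$ is precisely the $1$D East chain of length $L$ with empty boundary at the top, since for $x \in C$ the NE neighbor $x + e_1$ is always on the empty boundary and only the constraint from $x + e_2$ remains. By the standard East-model gap estimates (Proposition 4.4 and its use in Section 6 of \cite{cancrini2008kcmzoo}), the gap of this chain is bounded below by a function of $q$ independent of $L$. When $C$ is entirely empty, the restricted dynamics on $B$ has empty boundary on the right (from $C$) and above, reducing to an NE dynamics on the rectangle $[L-1] \times [L]$, whose gap dominates $\gamma_{[L-1]^2}^{\text{NE}}$ by monotonicity in the box dimensions.

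Then I would apply a two-block Poincaré inequality combining these two pieces with the block dynamics from Proposition 4.4 of \cite{cancrini2008kcmzoo} (the one already used earlier in this proof). This decomposes $\text{Var} f$ into a $C$-variance term and a $B$-variance term conditioned on $C$ empty, each of which is controlled by its respective sub-gap and the block gap coming from the probability that $C$ is empty. Combining the three ingredients and optimizing yields a recursion of the form $\gamma_L \ge q^3 \gamma_{L-1}$, and hence the bound.

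The main obstacle is ensuring that the recursion produces only a factor $q^3$ per step rather than the $q^L$ that a naive block decomposition would give: the event ``$C$ entirely empty'' has probability $q^L$, which is far too small to yield a linear exponent in $L$. The fix is to avoid conditioning on $C$ being \emph{entirely} empty and instead use a finer propagation along $C$ in the spirit of the East-model recursion — introducing empty sites in $C$ one at a time and using the East gap only for the single-site updates — so that the cost per inductive step is paid by a single flip of density $q$ plus an East Dirichlet-form contribution, rather than a full column being empty. Executing this cleanly is exactly the content of the East-model bisection machinery, and it is the one step where some care must be taken to match the constant $3$ in the exponent.
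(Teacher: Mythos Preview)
Your proposal has a genuine gap that you yourself flag but do not actually close. The column-peeling recursion $\gamma_{[L]^2}^{\text{NE}}\ge q^{3}\gamma_{[L-1]^2}^{\text{NE}}$ cannot be obtained by the mechanism you describe. In the two-block decomposition $(C,B)$ with $C=\{L\}\times[L]$, the constraint that allows $B$ to be freely resampled is that the \emph{entire} column $C$ is empty; by Proposition~4.4 of \cite{cancrini2008kcmzoo} the resulting block gap is $1-\sqrt{1-q^{L}}\asymp q^{L}$. The fact that the East chain on $C$ has an $L$-independent gap is true but irrelevant here: that gap controls the relaxation \emph{inside} $C$, not the equilibrium probability of the event ``$C$ empty'' that enters the block-dynamics constraint. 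Your suggested fix of ``introducing empty sites in $C$ one at a time'' still forces you, at every stage of the peel, to pay a factor comparable to $q$ for each site of $C$ before $B$ becomes free, and iterating over $L$ columns yields an exponent of order $L^{2}$, not $L$.

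The paper avoids this by bisecting the box rather than peeling columns. One passes from $[L_k]^2$ to $[L_{k-1}]^2$ with $L_k=2^k$ by cutting first vertically and then horizontally; each cut costs a factor $\approx q^{L_k}$ (respectively $q^{L_{k-1}}$) from the block-dynamics gap, but because the scale halves at every step the total exponent is the geometric sum $\sum_{n\le k}2^{n}\approx 2L_k$, which is linear in $L$. The ``East-model bisection machinery'' you invoke is precisely this halving of scales applied to the two-dimensional box, not a refinement of the dynamics on a single column; once you carry it out you recover the paper's argument rather than your $q^{3}$-per-column recursion.
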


\begin{proof}
We will prove the result for $L_{k}=2^{k}$ by induction on $k$.
Then monotonicity will complete the argument for all $L$. Consider
the box $\left[L_{k}\right]^{2}$, and divide it in two rectangles
\textendash{} $R_{-}=\left[L_{k-1}\right]\times\left[L_{k}\right]$
and $R_{+}=\left[L_{k-1}+1,L_{k}\right]\times\left[L_{k}\right]$.
We will run the following block dynamics
\begin{align*}
\mathcal{L}^{b\text{NE}}f & =\left(\mu_{R_{+}}f-f\right)+\One_{\partial_{-}R_{+}\text{ is empty}}\left(\mu_{R_{-}}f-f\right),
\end{align*}
where $\partial_{-}R_{+}$ is the inner left boundary of $R_{+}$.
Again, by Proposition 4.4 of \cite{cancrini2008kcmzoo},
\begin{align*}
\text{gap}\left(\mathcal{L}^{b\text{NE}}\right) & =1-\sqrt{1-\mu\left(\One_{\partial_{-}R_{+}\text{ is empty}}\right)}\\
 & =1-\sqrt{1-q^{L_{k}}}.
\end{align*}
Therefore for every local function $f$
\begin{align*}
\text{Var}f & \le\frac{1}{1-\sqrt{1-q^{L_{k}}}}\mathcal{D}^{b\text{NE}}f\\
 & =\frac{1}{1-\sqrt{1-q^{L_{k}}}}\mu\left(\text{Var}_{R_{+}}f+\One_{\partial_{-}R_{+}\text{ is empty}}\text{Var}_{R_{-}}f\right)\\
 & \le\frac{1}{1-\sqrt{1-q^{L_{k}}}}\mu\left(\frac{1}{\gamma_{R_{+}}^{\text{NE}}}\mathcal{D}_{R_{+}}^{\text{NE}}f+\frac{1}{\gamma_{R_{-}}^{\text{NE}}}\mathcal{D}_{R_{-}}^{\text{NE}}f\right),
\end{align*}
where $\gamma_{R}^{\text{NE}},\mathcal{D}_{R}^{\text{NE}}$ are the
spectral gap and Dirichlet form of the north-east dynamics in $R$
with empty boundary conditions for any fixed rectangle $R$. We see
that
\[
\gamma_{\left[L_{k}\right]^{2}}^{\text{NE}}\ge\left(1-\sqrt{1-q^{L_{k}}}\right)\gamma_{\left[L_{k-1}\right]\times\left[L_{k}\right]}^{\text{NE}}.
\]
If we repeat the same argument dividing $\left[L_{k-1}\right]\times\left[L_{k}\right]$
into the rectangles $\left[L_{k-1}\right]\times\left[L_{k-1}\right]$
and $\left[L_{k-1}\right]\times\left[L_{k-1}+1,L_{k}\right]$, we
obtain

\[
\gamma_{L_{k-1}\times L_{k}}^{\text{NE}}\ge\left(1-\sqrt{1-q^{L_{k-1}}}\right)\gamma_{\left[L_{k-1}\right]^{2}}^{\text{NE}}.
\]
Hence,
\[
\log\gamma_{\left[L_{k}\right]^{2}}^{\text{NE}}\ge\log\gamma_{\left[L_{k-1}\right]^{2}}^{\text{NE}}+2^{k}\log q-\log4,
\]
yielding
\[
\log\gamma_{\left[L_{k}\right]^{2}}^{\text{NE}}\ge\log q\sum_{n=1}^{k}2^{n}-k\log4
\]
which finishes the proof.
\end{proof}
We can now put everything together. Let $L$ be the diameter of $\mathcal{C}_{0}$.
By the second part of \claimref{gapofrestricteddynamics}
\begin{align}
\tau & \le\frac{1+q}{q}\,\frac{1}{\gamma^{r}}\le\frac{1+q}{q}\,\frac{1}{\left(1-\sqrt{1-q^{\left|\mathcal{C}\right|}}\right)\min\left\{ \gamma_{1}^{\mathcal{V}},\gamma_{0}^{\mathcal{C}_{0}}\right\} }\label{eq:boundingtauforneinfixedomega}\\
 & \le q^{-4L-1}.\nonumber 
\end{align}

Finally, we will use the sharpness of the phase transition for the
site percolation on the dual graph (see \cite{AizenmanBarsky87percosharpness,DuminilCopinTassion16percoshaprness}):
\begin{claim}
There exists a positive constant $c_{2}$ that depends on $\pi$ such
that $\nu\left(L\ge D\right)\le e^{-c_{2}D}$ for any $D\in\nn$. 
\end{claim}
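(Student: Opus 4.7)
The plan is to combine planar site-percolation duality with the sharpness of the subcritical phase transition. The main idea is that if the enclosed cluster $\mathcal{C}_0$ has large diameter then no nearest-neighbour circuit of easy sites around the origin can fit in a box of comparable size, and by duality this forces a long $*$-connected path of difficult sites emanating from the origin.

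First I would argue geometrically that on the event $\{L\ge D\}$, no nearest-neighbour circuit of easy sites around the origin can be contained in the box $\Lambda_D=[-D/2,D/2]^{2}$, since such a circuit would enclose a region of diameter at most $D$ containing $\mathcal{C}_0$. By the standard two-dimensional site-percolation duality (see Grimmett's \emph{Percolation}), the absence of a nearest-neighbour easy circuit around the origin in $\Lambda_D$ implies the existence of a $*$-connected path of difficult sites joining the origin (or one of its neighbours) to the outer boundary $\partial\Lambda_D$. Hence
\[
\{L\ge D\}\ \subseteq\ \{\text{the }*\text{-cluster of the origin in the difficult sites has radius}\ge D/2\}.
\]

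Next, since $\pi>p^{\text{SP}}$, the Kesten matching-lattice identity for site percolation on $\zz^2$ yields $1-\pi<p^{*\text{SP}}_{c}$, so the difficult sites are strictly subcritical for $*$-site percolation. Applying the Aizenman--Barsky / Duminil-Copin--Tassion sharpness theorem in the subcritical regime gives a positive constant $c_{2}=c_{2}(\pi)$ such that the probability of a $*$-connected difficult path from the origin of length at least $D/2$ is bounded by $e^{-c_{2}D}$ for all $D\in\nn$, which is precisely the claimed bound.

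The main (and essentially only) obstacle is the geometric/topological duality step: one must match the adjacency relations precisely, using nearest-neighbour connectivity for the easy circuit and $*$-adjacency for the difficult path, and handle the outer boundary of $\Lambda_D$ and the possibility that the origin itself is easy. All of this is entirely classical, and the subcritical sharpness result is used as a black box; no new percolation estimate is needed.
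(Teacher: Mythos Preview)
Your overall strategy agrees with the paper's one-line proof: reduce the tail of $L$ to subcritical exponential decay for $*$-percolation of difficult sites and invoke sharpness. However, your first geometric step contains a genuine error. The event $\{L\ge D\}$ does \emph{not} preclude a small nearest-neighbour circuit of easy sites around the origin: such a circuit may lie in a \emph{finite} easy cluster rather than in the infinite cluster $\mathcal{C}$, and then the component $\mathcal{C}_0$ of $\zz^2\setminus\mathcal{C}$ containing the origin swallows this circuit and extends beyond it. In that configuration there is no $*$-path of difficult sites from the origin to $\partial\Lambda_D$, so your inclusion $\{L\ge D\}\subseteq\{\text{the }*\text{-difficult cluster of the origin has radius}\ge D/2\}$ fails. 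The duality you quote is between easy nearest-neighbour circuits and difficult $*$-paths, but $L$ is defined via circuits in $\mathcal{C}$, not arbitrary easy circuits; these differ precisely when finite easy clusters are present.

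The fix is to work with the inner vertex boundary $\partial_{\text{in}}\mathcal{C}_0$ directly. Every site there has a neighbour in $\mathcal{C}$ but is itself outside $\mathcal{C}$, hence must be difficult (an easy site adjacent to $\mathcal{C}$ would belong to $\mathcal{C}$). By the standard boundary-connectivity lemma in $\zz^2$ the set $\partial_{\text{in}}\mathcal{C}_0$ is $*$-connected; since it contains the extremal points of $\mathcal{C}_0$ its diameter is at least $L$; and it separates the origin from infinity, so it meets the positive $x$-axis at some $(k,0)$. The $*$-difficult cluster through $(k,0)$ then has diameter at least $\max(k,D)$, and summing the sharpness bound $e^{-c\max(k,D)}$ over $k\ge 0$ gives the claimed exponential tail for $\nu(L\ge D)$.
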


Using this claim and \eqref{boundingtauforneinfixedomega}

\begin{align*}
\nu\left(\tau\ge t\right) & \le\nu\left(q^{-4L-1}\ge t\right)=\nu\left(L\ge\frac{\log t}{4\log\frac{1}{q}}-\frac{1}{4}\right)\\
 & \le\,C\,t^{\nicefrac{c}{\log q}}.
\end{align*}

\section{Conclusions and further questions}

We have seen here two simple examples of KCMs in random environments.
These examples show that when the environment has some rare remote
``bad'' regions the relaxation time fails to describe the true observed
time scales of the system. Since the dynamics is not attractive, also
techniques such as monotone coupling and censoring cannot be applied
to these models. In order to overcome this difficulty we considered
the hitting time $\tau_{0}$, which on one hand describes a physically
measurable observable, and on the other hand could be studied using
variational principles. We formulated some tools based on these variational
principles and used them in order to understand the behavior of $\tau_{0}$
in both models.

For future research, one may try to apply these tools on kinetically
constrained models in more types of random environment, such as the
polluted lattice, more general mixture of constraints, and models
on random graphs.

There are also some questions left open considering the models studied
here. For the first model, it is natural to conjecture that $\tau_{0}$
scales as $q^{-\alpha}$ for some random $\alpha$. We can also look
at the $\pi$ dependence of $\alpha$ \textendash{} we know that when
there are not many easy sites this time should become larger, until
it reaches the FA2f time when $\pi=0$. Looking at the proof of \thmref{scalingoftimeforfa12},
we can see that $\overline{\alpha}$ scales like $\frac{\log\nicefrac{1}{\pi}}{\pi}$,
and $\underline{\alpha}$ scales like $\pi^{-\nicefrac{1}{2}}$. It
seems more likely, however, that the actual exponent $\alpha$ behaves
like $\frac{1}{\pi}$ \textendash{} the $\log\frac{1}{\pi}$ of the
lower bound comes up also in the proof bounding the gap of the FA2f
model \cite{martinellitonitelli2016towardsuniversality}, and also
there it is conjectured that it does not appear in the true scaling.
In fact, if we use the path that we have chosen in order to bound
$\tau_{0}$ also for the bootstrap percolation, we will have the same
$\log\frac{1}{q}$ factor, and in this case it is known that it does
not appear in the correct scaling. The lower bound of $\pi^{-\nicefrac{1}{2}}$
could be improved, with the price of complicating the proof. In the
proof we assume that $\left[-L,L\right]^{2}$ contains only difficult
sites. If we require instead that enough lines and columns are entirely
difficult (but not necessarily the entire square), we can obtain a
bound that scales as $\frac{1}{\pi}$. It is worth noting here that
for the bootstrap percolation, by repeating the arguments of \cite{aizenmanlebowitz1988metastabilityz2,holroyd2003sharp}
with some minor adaptations, we can show that the scaling of the prefactor
$a$ with $\pi$ is between $e^{c/\pi}$ and $e^{C/\pi}$ for $c,C>0$.
\begin{conjecture}
For the mixed threshold FA model, $\nu$-almost surely the limit $\lim_{q\rightarrow0}\frac{\log\tau_{0}}{\log\nicefrac{1}{q}}$
exists. Its value $\alpha$ is a random variable whose law depends
on $\pi$. Moreover, the law of $\pi\alpha$ converges (in some sense)
to a non-trivial law as $\pi$ tends to $0$.
\end{conjecture}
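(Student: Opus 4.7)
My plan is to promote the separate exponents $\underline\alpha$ and $\overline\alpha$ of \thmref{scalingoftimeforfa12} to a single intrinsic quantity $\alpha(\omega)$ determined by a finite neighborhood of the origin. Following the FA2f metastability picture, I would define $\alpha(\omega)$ as a droplet energy barrier: the smallest $k$ such that there exists an increasing sequence of rectangles $R_0 \subset R_1 \subset \cdots \subset R_n$ with $R_0 \ni 0$ and $R_n$ reaching a good box of the infinite cluster $\mathcal{C}$, for which every enlargement $R_i \to R_{i+1}$ can be realized by emptying at most $k$ sites simultaneously, given the positions of the easy sites inside $R_{i+1}$. The existence of the limit would then amount to matching bounds of the form $q^{-\alpha \pm o(1)}$.

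For the upper bound, I would refine \lemref{fa12path} by replacing the step that creates an entire empty column inside each good box with a more parsimonious path that, at every stage, only flips the sites needed to advance the droplet by one unit. Plugging such a path into \corref{dirichletequalsexpectation} with the same combinatorial accounting as in the proof of \eqref{fa12d2kcmupperbound} should give $\ee_\mu \tau_0 \le q^{-\alpha - o(1)}$. For the lower bound, I would apply \propref{variationalprincipleforhittingtime} to the rescaled test function $\overline f = \lambda \One\{\text{origin not in the span of the droplet window}\}$: the Dirichlet form $\mathcal{D}\overline f$ is supported on bridging configurations whose $\mu$-mass is $O(q^{\alpha})$, while $\mu(f)$ is bounded below by a constant, and optimizing over $\lambda$ as in the proof of \eqref{fa12d2kcmlowerbound} would yield $\mu(\tau_0) \ge q^{-\alpha + o(1)}$. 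The genuine randomness of $\alpha$ is then automatic, since $\alpha(\omega)$ depends on the local pattern of easy sites near the origin, strengthening \remref{exponentsaretruelyrandom}.

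For the scaling of $\pi\alpha$ as $\pi \to 0$, the natural lengthscale is $1/\pi$, the typical spacing of easy sites along a row or column. I would rescale space by $\pi$ and argue that the law of the droplet-barrier functional $\pi\alpha(\omega)$ converges to the law of the analogous functional computed from a Poisson point process of easy sites in $\rr^2$. A standard coupling between $\operatorname{Ber}(\pi)$ easy sites on $\zz^2$ and a Poisson process of intensity $1$ on $\pi\zz^2$, followed by a rescaling argument, should suffice provided the droplet functional is continuous in the local topology on configurations. Non-triviality of the limit follows because the Poisson functional is clearly random and almost surely finite.

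The principal obstacle will be the tightness of the upper bound. The path in \lemref{fa12path} pays a factor $q^{-L}$ per good box, which is far larger than the true droplet cost, and removing this slack in the quenched environment requires essentially the sharp path analysis which is still missing, up to a $\log(1/q)$ factor, even for homogeneous FA2f \cite{martinellitonitelli2016towardsuniversality}. A secondary difficulty is to prove that $\alpha(\omega)$ does not depend on the arbitrary choice of cluster or window used in its definition, so that it is an intrinsic functional of $\omega$; this should follow from a subadditive or ergodic argument, but requires care in controlling the tail of the droplet path.
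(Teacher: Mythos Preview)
The statement you are trying to prove is a \emph{conjecture} in the paper, not a theorem: it appears in the concluding section as an open problem, and the paper offers no proof. So there is nothing to compare your argument against. What the paper does provide is the heuristic discussion just above the conjecture, which you have absorbed well: the upper bound $\overline\alpha$ scales like $\frac{\log(1/\pi)}{\pi}$ because of the wasteful path, the lower bound $\underline\alpha$ like $\pi^{-1/2}$ because only fully difficult squares are used, and the paper suggests both should really be $\frac{1}{\pi}$.

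Your outline is a sensible research programme, not a proof, and you are candid about this. The decisive gap is exactly the one you name at the end: matching the upper and lower bounds to a single $\alpha(\omega)$ requires a path whose energy cost is the true droplet barrier, and this is open even for homogeneous FA2f (where the known bounds differ by a $\log(1/q)$ in the exponent). Until that is resolved, your definition of $\alpha(\omega)$ as ``the smallest $k$ such that\dots'' is not known to coincide with either $\underline\alpha$ or $\overline\alpha$, and the existence of the limit $\lim_{q\to0}\frac{\log\tau_0}{\log(1/q)}$ does not follow. The Poisson scaling limit for $\pi\alpha$ is a natural guess, but it presupposes that $\alpha$ is already identified as a local functional of the easy-site configuration, which is precisely the missing piece; continuity of the droplet functional in the local topology is also not obvious, since the barrier could in principle be sensitive to the exact combinatorics of easy sites along the droplet boundary.
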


The mixed North-East and FA1f model also raises many questions, among
them finding the critical probability $q_{c}$, and characterizing
the behavior of both the bootstrap percolation and the KCM in the
different parameter regimes.

\section*{Acknowledgments}

I would like to thank Cristina Toninelli and Fabio Martinelli for
the helpful discussions and useful advice. I acknowledge the support
of the ERC Starting Grant 680275 MALIG.

\bibliographystyle{plain}
\bibliography{random_constraint}

\end{document}